\documentclass{article}
\usepackage[a4paper, total={6in, 8in}]{geometry}

\usepackage{indentfirst}

\usepackage{amsthm}
\usepackage{amssymb}
\usepackage{amsmath}

\usepackage{pdfpages}
\usepackage{hyperref}

\usepackage{graphicx}
\usepackage{tikz-cd}
\usepackage{stmaryrd} 


\newtheorem{theorem}{Theorem}[section]
\newtheorem{lemma}[theorem]{Lemma}
\newtheorem{corollary}[theorem]{Corollary}
\newtheorem{proposition}[theorem]{Proposition}
\newtheorem{remark}[theorem]{Remark}
\theoremstyle{definition}
\newtheorem{definition}[theorem]{Definition}


\newcommand{\Z}{\mathbb{Z}}
\newcommand{\R}{\mathbb{R}}
\newcommand{\C}{\mathbb{C}}

\newcommand{\RP}{\mathbb{RP}}
\newcommand{\Sphere}{\mathbb{S}}
\newcommand{\Isom}{\operatorname{Isom}}

\newcommand{\Mat}{\operatorname{Mat}}
\newcommand{\GL}{\operatorname{GL}}
\newcommand{\SL}{\operatorname{SL}}
\newcommand{\Ortho}{\operatorname{O}}

\newcommand{\PGL}{\operatorname{PGL}}

\newcommand{\Hom}{\operatorname{Hom}}
\newcommand{\Id}{\operatorname{Id}}
\newcommand{\orb}{\mathcal{O}}

\newcommand{\rom}[1]{\uppercase\expandafter{\romannumeral#1}}

\newcommand{\Def}{\operatorname{Def}}
\newcommand{\Defss}{\operatorname{Def}^{\operatorname{ss}}}
\newcommand{\Defgen}{\operatorname{Def}_{\operatorname{gen}}}
\newcommand{\Defcon}{\operatorname{Def}_{\operatorname{con}}}
\newcommand{\Defssgen}{\operatorname{Def}^{\operatorname{ss}}_{\operatorname{gen}}}
\newcommand{\Defsscon}{\operatorname{Def}^{\operatorname{ss}}_{\operatorname{con}}}

\newcommand{\diag}{\operatorname{diag}}
\newcommand{\Ann}{\operatorname{Ann}}



\title{Convex real projective structures on Coxeter orbifold $D^2(;n_1,n_2,n_3,n_4)\times \mathbb{R}$}
\author{Jaesung Bae}
\date{}

\begin{document}

\maketitle

\begin{abstract}
   The deformation space of real projective structures parametrizes the space of the convex real projective structures on an orbifold. The Coxeter orbifold can be obtained $D^2(;n_1,n_2,n_3,n_4)\times\mathbb{R}$ by embedding the Coxeter quadrilateral from $\mathbb{RP}^2$ to $\mathbb{RP}^3$, and extending the side edges to planes and perturbing these to obtain a convex polytope. This noncompact orbifold can be induced from two combinatorial polytopes. Using this fact, we determine the deformation space of real projective structures on this orbifold with parametrization, and see its very detailed properties.
\end{abstract}

\textbf{Keywords} : 
Convex real projective structure, Coxeter orbifold, Deformation space, Character variety

\tableofcontents

\section[Introduction]{Introduction} \label{ch:1} \noindent

Understanding moduli spaces of geometric structures is a fundamental problem in hyperbolic geometry. However, in dimensions three or higher, Mostow rigidity implies that the deformation space of hyperbolic structures is trivial. This motivates us to explore alternative geometric structures, such as real projective structures, to gain deeper insights into deformation spaces.

Let $\Sphere^n=\R^{n+1}-\{0\}/\sim$ be the 
\textit{real projective $n$-sphere} consisting of the open rays of $\R^{n+1}$ from the origin. The group $\SL_\pm(n+1,\R)$ can be viewed as the group of real projective automorphisms of $\Sphere^n$. A \textit{real projective structure} on a smooth $n$-sphere $\orb$ is an atlas of coordinate charts from $\orb$ to $\Sphere^n$ such that the changes of coordinates locally lie in $\SL_\pm(n+1,\R)$. A polytope equipped with a real projective structure is called a \textit{projective polytope}. We can consider a \textit{projective reflection}, whose kernel is a hyperplane containing each side of the projective polytope in the vector space $V$. By silvering the sides and removing some vertices or edges of the projective polytope, we can construct an orbifold. Specifically, we focus on a special type of projective polytope and its orbifold, whose orbifold fundamental group is a well-behaved group, called the \textit{Coxeter group}. A \textit{Coxeter polytope} is a projective polytope whose projective reflections generate a Coxeter group. We call its induced orbifold a \textit{Coxeter orbifold}. The \textit{deformation space} is the space of real projective structures on the orbifold modulo isotopy equivalence.

In general, it is challenging to compute the deformation space directly. However, the deformation space of some Coxeter orbifolds can be computed by examining the relationship between the deformation space and other objects. We provide an injective open map from the deformation space to the \textit{character variety}, which can be computed from the representations of the group homomorphisms.

A $(G, X)$-structure on a Coxeter polytope can be defined in terms of the projective reflections of its sides. More precisely, it is defined by a linear functional and its eigenvector for each projective reflection. Hence, we can compute the space of representations in $\Hom(\Gamma, G)$, where $\Gamma$ is the orbifold fundamental group of the Coxeter orbifold, which is identical to the related Coxeter group. To ensure that the induced Coxeter orbifold has a nonempty interior, these reflections must satisfy specific algebraic conditions. One of the conditions involves the dihedral angles, which correspond to the orders of two generators of the Coxeter group. This order is called an \textit{edge order}. (The name edge comes from the Coxeter diagram since an edge of the diagram corresponds to the dihedral angle between two sides, which are the nodes of the diagram.)

Choi and Goldman \cite{CG05} studied the deformation spaces of Coxeter 2-orbifolds. Thus, the deformation spaces of Coxeter 3-orbifolds or higher-dimensional Coxeter orbifolds are interesting problems. Choi and Lee \cite{CL15} determined the deformation spaces of orderable Coxeter orbifolds, and Marquis \cite{Mar10} studied the deformation spaces of truncation polytopes. The underlying spaces of these orbifolds are compact spaces with certain isolated points removed. In this paper, we will determine the deformation space of a specific orbifold whose underlying space is a compact space with certain edges removed.

We will compute the group homomorphisms and the character variety to understand the deformation space of a specific Coxeter orbifold: the product of a topological disk with four finite corner reflectors and a real line, i.e., $D^2(; n_1, n_2, n_3, n_4) \times \R$. Additionally, we will verify Porti’s theorem \cite{Por23}, which describes a local homeomorphism from the character variety for $\SL_\pm(n,\R)$ to $\SL_\pm(n+1,\R)$ for the same orbifold. Finally, we will examine how the conditions of convex cocompactness in the representations affect the deformation space.

We are also interested in determining which Coxeter polytope can induce a given Coxeter orbifold. When all edge orders are finite, the Coxeter orbifold can be induced by removing only a few points from the corresponding Coxeter polytope, resulting in a unique Coxeter polytope up to combinatorial equivalence. However, in our case, involving infinite edge orders, the same Coxeter orbifold can be induced by combinatorially distinct polytopes. Therefore, we aim to distinguish such polytopes that induce the convex Coxeter $D^2(; n_1, n_2, n_3, n_4) \times \R$ through the following remark.

\begin{remark}\label{Remark1}
The Coxeter polytope $P$, whose Coxeter orbifold is $D^2(; n_1, n_2, n_3, n_4) \times \R$, with the assumption that all $n_j$ are finite and greater than or equal to 3, can be divided into two cases, since both satisfy the conditions of the same Coxeter group. Here, $\alpha_j$ represents the linear functional of projective reflection $R_j$ for each $j=1, 2, 3, 4$.

\begin{itemize}
\item [1.] If $\alpha_j$'s are linearly independent, then $P$ is a Coxeter 3-simplex where two non-adjacent edge orders are infinite, and the other edge orders are identical to $n_1, n_2, n_3$, and $n_4$.
\item [2.] If $\alpha_j$'s are linearly dependent, then $P$ is $\orb^2_4 \times \R$, where $\orb^2_4$ is a Coxeter quadrilateral with edge orders $n_1, n_2, n_3, n_4$, and $\R$ is orthogonally producted to $\orb^2_4$.
\end{itemize}
\end{remark}

We refer to the first case as the \textit{general position case} and the second case as the \textit{concurrent case}. We introduce some notations for simplicity.
Let $\Def(\hat{P})$ denote the deformation space of $\hat{P}$, and let $\Defss(\hat{P})$ denote the subset of $\Def(\hat{P})$ associated with semisimple representations. (See Section \ref{ch:2.2}.) Define $\Defgen(\hat{P})$ (resp. $\Defssgen(\hat{P})$) as the subspace of $\Def(\hat{P})$ (resp. $\Defss(\hat{P})$) corresponding to the general position case, and define $\Defcon(\hat{P})$ (resp. $\Defsscon(\hat{P})$) as the subspace of $\Def(\hat{P})$ (resp. $\Defss(\hat{P})$) corresponding to the concurrent case. (See Chapter \ref{ch:3}) Now, we present the main theorem of this paper.

\begin{theorem}[Main Theorem]\label{Main Theorem}
Let $\hat{P}$ be a convex Coxeter orbifold $D^2(; n_1, n_2, n_3, n_4) \times \R$, where $n_1, n_2, n_3, n_4$ are positive integers greater than or equal to 3.  Then,

\begin{itemize}
\item $\Defss(\hat{P})$ is homeomorphic to $\R^3 \times [4, \infty)^2$.
\item $\Defsscon(\hat{P})$ is homeomorphic to $\R^4$.
\item $\Defssgen(\hat{P})$ has two connected components.
\item $\Defgen(\hat{P})$ is homeomorphic to $\R^3 \times [4, \infty)^2$.
\item $\Def(\hat{P})$ has two non-Hausdorff components.
\end{itemize}
\end{theorem}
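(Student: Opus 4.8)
The plan is to compute everything through the character variety via the parametrization of $(G,X)$-structures on a Coxeter polytope by the data $(\alpha_j, v_j)$ — a linear functional and an eigenvector for each projective reflection $R_j$ — subject to the algebraic constraints that guarantee the induced orbifold has nonempty convex interior. First I would set up the reflection data explicitly in $\SL_\pm(4,\mathbb{R})$: since the Coxeter group of $D^2(;n_1,n_2,n_3,n_4)\times\mathbb{R}$ has generators $R_1,\dots,R_4$ (the four sides of the quadrilateral, silvered) with $(R_jR_{j+1})^{n_j}=1$ cyclically and $R_jR_{j+2}$ of infinite order, I would write each $R_j = \mathrm{Id} - v_j\alpha_j^{T}$ with $\alpha_j(v_j)=2$, and translate the braid/commutation relations into conditions on the Cartan-type matrix $A=(\alpha_i(v_j))$. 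The diagonal entries are $2$; the pairs $(A_{j,j+1},A_{j+1,j})$ for adjacent sides must satisfy $A_{j,j+1}A_{j+1,j}=4\cos^2(\pi/n_j)$; and the pair $(A_{13},A_{31})$, $(A_{24},A_{42})$ corresponding to the infinite orders must have the product $\geq 4$ (the parabolic/hyperbolic regime) — this last condition is exactly what produces the $[4,\infty)$ factors.

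**The two cases and the gluing.**

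Next I would split according to Remark \ref{Remark1}. In the concurrent case the $\alpha_j$ are linearly dependent (rank $3$), the polytope is $\orb^2\times\mathbb{R}$, and the deformation reduces — after quotienting by the conjugation action of $\SL_\pm(4,\mathbb{R})$ and accounting for the orthogonal $\mathbb{R}$-factor — essentially to the Choi–Goldman computation \cite{CG05} for the Coxeter quadrilateral $\orb^2 = D^2(;n_1,n_2,n_3,n_4)$. That deformation space is known to be a cell; combined with the product structure one gets $\Defsscon(\hat P)\cong\mathbb{R}^4$. In the general position case the $\alpha_j$ span $\mathbb{R}^4$, so $A$ is a genuine $4\times4$ matrix and $P$ is a $3$-simplex with the two nonadjacent edge orders infinite; here I would solve the constraint system directly, normalizing by scaling each $v_j$ (which rescales row/column $j$ of $A$ reciprocally) and by conjugation, and count the remaining free parameters. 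The invariants of $A$ that survive normalization are: the four products $A_{j,j+1}A_{j+1,j}$ (three of which are pinned to $4\cos^2(\pi/n_j)$ by the finite edge orders, one of which — wait, cyclically there are four adjacent pairs in the quadrilateral but in the simplex picture the combinatorics differ), the two "infinite" products in $[4,\infty)$, plus a cyclic-product invariant $\prod A_{j,j+1}/\prod A_{j+1,j}$ type quantity that ranges over $\mathbb{R}$ or $(0,\infty)$. Assembling these gives the $\mathbb{R}^3\times[4,\infty)^2$ in the general case and, after adjoining a sign/orientation choice that splits off when representations are required semisimple, the two connected components of $\Defssgen(\hat P)$.

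**Glueing the pieces and the main obstacle.**

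To get $\Defss(\hat P)\cong\mathbb{R}^3\times[4,\infty)^2$ I would show $\Defsscon(\hat P)$ sits inside the closure of $\Defssgen(\hat P)$ as the locus where the cyclic-product invariant degenerates (the $\alpha_j$ becoming dependent is a codimension condition), and that the two general-position components are glued along exactly this concurrent locus $\cong\mathbb{R}^4$, so the union is connected and the combined parameter space collapses to $\mathbb{R}^3\times[4,\infty)^2$ — here one checks that crossing from one component to the other is realized by a continuous path of (non-semisimple at the wall) representations, which is why the full $\Def$ and $\Defss$ agree and are cells while $\Defss_{\mathrm{gen}}$ alone is disconnected. Finally $\Defgen(\hat P)\cong\mathbb{R}^3\times[4,\infty)^2$ follows because in the general position case semisimplicity is automatic (the simplex reflection group acts irreducibly), so $\Defgen=\Defssgen$ as a set but the topology on the non-semisimple-allowing $\Def$ reconnects the two components — I would verify this by exhibiting the reducible representations on the wall as limits from both sides.

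The main obstacle I anticipate is the bookkeeping in the general position case: correctly identifying which combination of the off-diagonal products and the cyclic ratio is a complete invariant after the scaling-and-conjugation normalization, and pinning down the precise range (all of $\mathbb{R}$, or $(0,\infty)$, or two copies thereof) of the residual invariant — this is what determines the component count and whether the gluing with the concurrent stratum yields a manifold-with-corners homeomorphic to $\mathbb{R}^3\times[4,\infty)^2$ rather than something with extra boundary. A secondary subtlety is checking that the convexity (nonempty interior, Vinberg-type) conditions cut out exactly the stated closed half-lines $[4,\infty)$ and impose no further inequalities on the $\mathbb{R}^3$ factor.
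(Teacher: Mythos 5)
Your overall architecture matches the paper's: normalize the tuple $(\alpha,v)$, encode the structure in the Cartan matrix, split into the concurrent and general position cases, read off the $[4,\infty)$ factors from the two infinite edge orders, and glue along the concurrent locus. However, there is a genuine error at the step that controls the component count and the connectedness statements. You assert that ``in the general position case semisimplicity is automatic (the simplex reflection group acts irreducibly), so $\Defgen=\Defssgen$ as a set.'' This is false, and it contradicts your own next clause about reducible representations on the wall. In the paper's standard position ($\alpha_j=e_j^*$ for $j=1,2,3$, with $e_4^*(v_j)=0$ for $j=1,2,3$), the general position case $a_4\neq 0$ still contains representations with $v_{44}=0$; there all four $v_j$ lie in the hyperplane $\ker e_4^*$, so $V_v=\operatorname{span}\{v_j\}$ is $3$-dimensional while $V_\alpha=\bigcap_j\ker\alpha_j=\{0\}$, and by the criterion $V=V_\alpha\oplus V_v$ (Lemma \ref{DGKLM Lemma3.26}) such representations are \emph{not} semisimple. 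These non-semisimple general-position points are exactly what make $\Defgen$ connected (and, via the unique semisimple class attached to each by Proposition \ref{DGKLM Fact3.28}, what make $\Defgen\cong\Defss\cong\R^3\times[4,\infty)^2$), whereas deleting them leaves the two components of $\Defssgen$. With your premise, $\Defgen$ and $\Defssgen$ would be the same topological subspace and could not have different numbers of components, so your argument for the third and fourth bullets does not close.

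The second gap is the one you flag yourself: you never pin down the residual invariant after the scaling-and-conjugation normalization, and this is precisely where the dichotomy lives. The paper's resolution is concrete: after fixing the standard position, the leftover $G$-action is diagonal and preserves the product $a_4v_{44}$ of the two ``fourth coordinates,'' so the section in the $(a_4,v_{44})$-plane is quotiented along hyperbolas; semisimple general position is $a_4v_{44}\neq 0$ (two signs, hence two components), the semisimple concurrent locus is the single point $a_4=v_{44}=0$, and the non-semisimple loci are the two punctured axes. Equivalently, the complete invariants are the cyclic invariants $(1,3)$, $(2,4)$ in $[4,\infty)$ together with three length-$3$ cyclic invariants (Proposition \ref{Vin71 Prop16} and Section \ref{ch:5.1}); you would need to carry out this identification, together with the Vinberg nonemptiness checks (Propositions \ref{Vin71 Prop13} and \ref{Vin71 Prop14}) showing conditions (1) and (3) of the constraint list are implied by the others, before the ranges $\R^3\times[4,\infty)^2$ and the component count can be asserted. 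Your proposed shortcut for the concurrent case via Choi--Goldman is a genuinely different route from the paper's direct computation and is consistent with the Porti comparison in Section \ref{ch:4.1}, but it too requires the semisimplicity analysis above to rule out the extra $v_{44}$-direction; without it the concurrent stratum does not visibly reduce to the quadrilateral's $\R^4$.
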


We make several contributions to the study of deformation spaces of Coxeter orbifolds.  
First, we compute the deformation space of a specific Coxeter orbifold, $D^2(; n_1, n_2, n_3, n_4) \times \R$, and provide an explicit parametrization.  
Second, we analyze the effects of convex cocompactness in the representation space and its implications for the deformation space.  
Third, we give relations between deformation spaces and character varieties of orbifolds whose underlying spaces are compact spaces with edges removed as well as isolated points removed. 
Finally, we distinguish different Coxeter polytopes that induce the same Coxeter orbifold when edge orders are infinite, revealing structural differences between these polytopes.

\subsection{Outline of the paper}

In chapter \ref{ch:2}, we will see fundamental definitions of Coxeter orbifolds, deformation spaces, and character varieties.
In chapter \ref{ch:3}, we will see the proof of the main theorem and analyze key arguments step by step.
We will examine Porti's theorem and analyze additional properties of the deformation space through convex cocompactness in chapter \ref{ch:4}. (See Proposition \ref{Additional prop}.)
We will discusses additional computations and numerical experiments using Mathematica in chapter \ref{ch:5}.

\section[Preliminaries]{Preliminaries}
\label{ch:2}

\subsection{Coxeter orbifolds}
\label{ch:2.1}

\subsubsection{Basic definitions of orbifolds}
\label{ch:2.1.1}

Recall the definition of a manifold. A manifold is a topological space which is locally Euclidean, second countable (or equivalently, paracompact), and Hausdorff. More rigorously, a manifold is defined via a collection of manifold charts, with its structure determined by a maximal atlas consisting of compatible local homeomorphisms to open subset of Euclidean space. An orbifold can be regarded as a generalization of this framework, wherein the local charts are allowed to model neighborhoods as quotients of Euclidean spaces by finite group actions. 

\begin{definition} \label{Def:orb}
Let $|\orb|$ be a Hausdorff topological space. An \textit{orbifold chart} is a collection of triples $(\tilde{U}_\alpha, G_\alpha, \phi_\alpha)$ for each index $\alpha$ satisfying the followings.
\begin{itemize}
    \item [(1)] $\tilde{U}_\alpha$ is a connected open subset in $\R^n$.
    \item [(2)] $G_\alpha$ is a finite group acting smoothly on $\tilde{U}_\alpha$.
    \item [(3)] $\phi_\alpha : \tilde{U}_\alpha \rightarrow U_\alpha$ is a $G_\alpha$-invariant map, which defines a homeomorphism between $\tilde{U}_\alpha / G_\alpha$ and $U_\alpha$ where $U_\alpha \subset |\orb|$ is an open subset.
    \item [(4)] The collection of $U_\alpha$ covers $|\orb|$. 
\end{itemize}

An \textit{orbifold atlas} is a family of compatible orbifold charts. An \textit{orbifold} $\orb$ is a topological space $X$ with a maximal orbifold atlas. We denote $|\orb|$ its \textit{underlying space}.
    
\end{definition}

To define the orbifold version of the fundamental group, define the orbifold version of path, called $\mathcal{G}$-path. Let $\mathcal{G}$ be a groupoid of local diffeomorphisms of $\orb$. Take a subdivision of $[a,b]$ with $a=t_0 \leq t_1 \leq \cdots \leq t_k = b$. A \textit{$\mathcal{G}$-path} is a map $r : [a,b] \rightarrow \orb$ which is continuous on each $[t_{i-1},t_i]$ and an element $g_i \in \mathcal{G}$ is defined in a neighborhood of $c_i(t_i)$, for each $i$, such that $g_i c_i(t_i) = c_{i+1}(t_i)$, where $c_i$ is the restriction of $r$ on $[t_{i-1},t_i]$. In the point of covering space of $\orb$, $\mathcal{G}$-path is a continuous path allowing movement on to other preimages of covering map with the same image point for finite times. A \textit{$\mathcal{G}$-loop} is a $\mathcal{G}$-path with same endpoints. Refer Figure \ref{CJ22 Figure}. The \textit{orbifold fundamental group} based at $x_0 \in \orb$ consists of homotopy classes of $\mathcal{G}$-loops based at $x_0 \in \orb$, denote $\pi_1^{\operatorname{orb}}(\orb,x_0)$.

\begin{figure}[h]
    \centering
    \includegraphics[height=4cm]{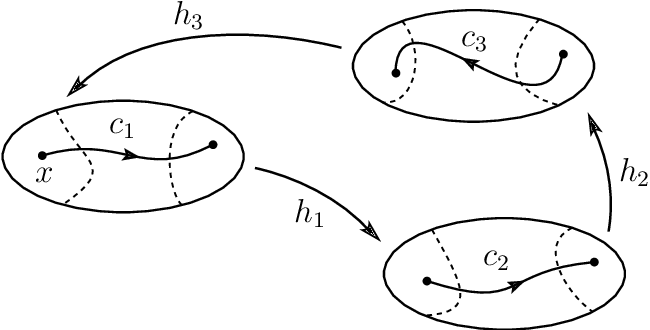}
    \includegraphics[height=4cm]{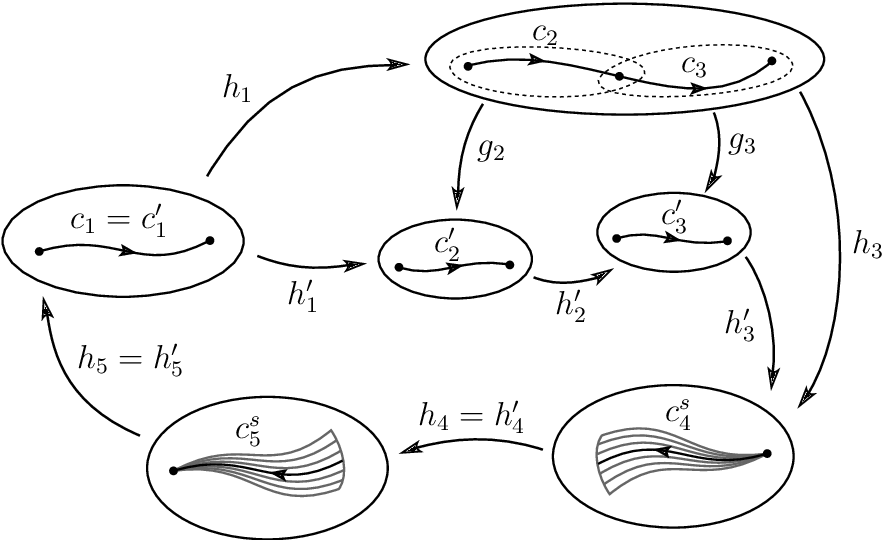}
    \caption{A $\mathcal{G}$-loop and homotopic $\mathcal{G}$-loops (\cite{CJ22}, Figure 2.1 \& 2.2)}
    \label{CJ22 Figure}
\end{figure}

Recall that we can dismiss the notation of base point on the fundamental group of a manifold if the given manifold is connected. Similarly, if $\orb$ is a connected orbifold, then we can simplify the notation, $\pi_1^{\operatorname{orb}}(\orb,x_0)=\pi_1^{\operatorname{orb}}(\orb)$. Since we consider only orbifold fundamental groups throughout this paper, we write $\pi_1(\orb)=\pi_1^{\operatorname{orb}}(\orb)$ for simplicity.

The definition of  \textit{Euler characteristic of orbifold} is $\chi^{\operatorname{orb}}(\orb)=\sum(-1)^{\dim s_k} \frac{1}{N_{s_k}}$ where $s_k$ is open $k$-cell in the triangulation and $N_{s_k}$ is the order of the local group. Instead of the original definition, \textit{Riemann-Hurwitz formula} is useful to compute the Euler characteristic of orbifold.
\[
\chi^{\operatorname{orb}}(\orb) = \chi(|\orb|)-\sum_{i=1}^{m}\left( 1-\frac{1}{q_i} \right) - \frac{1}{2} \sum_{j=1}^{n} \left( 1-\frac{1}{r_j} \right) - \frac{1}{2} n_\orb
\]
Here, $q_i$ is the order of each cone point, $r_j$ is the order of each corner-reflection, and $n_\orb$ is the number of full 1-orbifold boundary components. In this paper, we will simply denote $\chi(\orb)=\chi^{\operatorname{orb}}(\orb)$.

We define the orientation of an orbifold as the following. Let $\orb$ be an orbifold. $\orb$ is \textit{locally orientable} if there is an atlas with orbifold charts $(\tilde{U}_\alpha,G_\alpha,\phi_\alpha)$ such that each $G_\alpha$ acts by a diffeomorphism of $\tilde{U}_\alpha$ which preserves the orientation. If there is an orientation such that every embedding between charts is orientation-preserving for each $\tilde{U}_\alpha$, then we say that $\orb$ is \textit{orientable}.

We can classify the singular points of 2-orbifolds. Let $\orb^2$ be a 2-orbifold. A \textit{mirror point} is a point having an orbifold chart with $\R^2/\Z_2$, where $\Z_2$ is generated by the orthogonal reflection on the $y$-axis. A \textit{cone-point} of order $n$ is a point having an orbifold chart with $\R^2/\Z_n$, where $\Z_n$ acts by rotations by angles $2\pi \frac{m}{n}$ for integers $m$. A \textit{corner-reflector} of order $n$ is a point having an orbifold chart with $\R^n/D_n$, where $D_n$ is the dihedral group of order $2n$ which is generated by reflections about two lines meeting at an angle $\frac{\pi}{n}$. We denote $D^2(m_1,\cdots,m_i ; n_1,\cdots,n_j)$ the orbifold where $m_{i'}$ is a cone point of order $m_{i'}$ and $n_{j'}$ is a corner-reflector of order $n_{j'}$ whose underlying space is a topological disk. In this paper, we will focus on $D^2(;n_1,n_2,n_3,n_4)$, the orbifold having four corner-reflectors whose underlying space is homeomorphic to a disk. A \textit{silvering} can be defined on mirror points similar to the definition on a manifold with boundary. The silvering can be generalized on $n$-orbifold with $n\geq3$, where the orbifold chart of the singular point is generated by $\Z_2$ with orthogonal reflection.

Refer to Choi's book to see more details about an orbifold. (\cite{Cho12}, chapter 4)

\subsubsection{Convex real projective structure}
\label{ch:2.1.2}

Let $X$ be a geometric space and $G$ be an Lie group acting on $X$ transitively and effectively. Then, the \textit{$(G,X)$-structure for an $n$-orbifold} $\orb^n$ is a maximal atlas of charts to orbit spaces of finite subgroups of $G$ acting on open subsets of $X$. In other words, we can define $(G,X)$-structure for an orbifold from the definition \ref{Def:orb} by replacing $\R^n$ to $X$, and $G_\alpha$ to some finite subgroups of $G$.

We can get some geometric structures from geometries since the isometry group of $X$, denoted by $\Isom(X)$, is a Lie group acting on $X$ transitively and effectively. Here are some examples. A Euclidean structure is $(\Isom(\R^n),\R^n)$-structure with $\Isom(\R^n)= \Ortho(n) \ltimes \R^n$. A spherical structure is $(\Isom(S^n),S^n)$-structure where 
\[S^n=\{ x=(x_0,\cdots,x_n) \in \R^{n+1} \; | \; x_0^2 + x_1^2 + \cdots + x_n^{2} = 1\}\] and $\Isom(S^n) = \Ortho(n+1)$. A hyperbolic structure is $(\Isom(H^n),H^n)$ where \[H^n=\{ x=(x_0,\cdots,x_n) \in \R^{n+1} \; | \; -x_0^2 + x_1^2 + \cdots + x_n^{2} = 1, \; x_0>0 \}\]
and 
\[\Isom(H^n)=\Ortho(n,1)=\{ JA \; | \; J=diag(-1,1,\cdots,1), A \in \Ortho(n+1) \}.\]

On the other side, convex real projective structures does not come from some geometry. Instead, a Lie group $\PGL(n+1,\R)$ takes a roll of automorphism group of real projective space $\RP^n$. Recall that the projective space $\RP^n$ is the quotient space $\R^{n+1}-\{0\}/\sim$, where the equivalence relation $\sim$ is defined by
\[
x\sim y \quad \Leftrightarrow \quad x=ty \quad \text{ for } t\in \R-\{0\}
\]
, and the projective linear group $\PGL(n+1,\R)$ is $\GL(n+1,\R)/\sim$, where $\sim$ is defined by 
\[
A\sim B \quad \Leftrightarrow \quad A=tB \quad \text{ for } t\in \R-\{0\}.
\]
Moreover, convex real projective structures can be represented by the covering space of $\RP^n$. Let $\Sphere^n$ be the unit sphere which is the image of the \textit{spherical projection} $\Sphere : \R^{n+1}-\{0\} \rightarrow \Sphere^n $ which sends $x$ to an equivalence class $[x]$ where the equivalence relation is defined by
\[
x\sim y \quad \Leftrightarrow \quad x=ty \quad \text{ for } t>0.
\]
Let
\[
\SL_\pm(n+1,\R)=\{A \in GL(n+1,\R) \; | \; \det(A)=\pm1 \}.
\]
Then, we can note that $\PGL(n+1,\R)=\SL_\pm(n+1,\R)/\{\pm I\}$ and $\Sphere^n=\RP^n/\{\pm 1\}$. Therefore, we can conclude that $(\SL_\pm(n+1,\R),\Sphere^n)$-structures also possesses the convex real projective structures. 

\begin{definition} \label{def:projective structure}

A \textit{convex real projective structure} on an $n$-orbifold $\orb^n$ is a $(\PGL(n+1),\RP^n)$-structure (or $(\SL_\pm(n+1,\R),\Sphere^n)$-structure).

\end{definition}

It is easy to see that the isometry group of $E^n$, $S^n$, and $H^n$ can be represented by orthogonal groups. $\Isom(S^n)$ is an orthogonal group by itself, and $\Isom(H^n)$ can be represented with some diagonal matrix $J$. For Euclidean structures, there is a one-to-one correspondence between $O(n)\ltimes \R^{n}$ and the subset of $\Mat(n+1,\R)$, by
\[
\begin{bmatrix}
& & & |  \\
\multicolumn{3}{c}
{\scalebox{1.5}{$A$}} & \textit{b}  \\
& & & | \\
0 & \cdots & 0 & 1 
\end{bmatrix}
\]
where $A \in O(n)$ and $b \in \R^{n}$ as an $n$-by-1 matrix. (Refer to chapter 5.2 of \cite{Rat19}.) 
Not only for Lie groups, each underlying space of $E^n$, $S^n$, and $H^n$ can be defined by spherical projection of some subset of $\R^{n+1}$ from the origin. Therefore, we can conclude that either euclidean, spherical, or hyperbolic structure is also a convex real projective structure.

In this paper, we will use $(\SL_\pm(n+1,\R),\Sphere^n)$-structures to see convex real projective structures.

\subsubsection{Coxeter orbifolds}
\label{ch:2.1.3}

A \textit{Coxeter group} $\Gamma$ is a group defined by
\[
\Gamma = \langle R_i \; | \; R_i^2, (R_i R_j)^{n_{ij}} \rangle
\]
where $n_{ij} \in \Z_{\geq2} \cup \{\infty\}$ for every $i,j\in I$. Here, $I$ is a countable index set and $n_{ij}$ is symmetric, in other words, $n_{ij}=n_{ji}$.

We will define the projective polytope by the spherical projection. Let $V=\R^{n+1}$. A \textit{projective reflection} is an element of $\SL_\pm(V)$ of order 2 fixing hyperplane of $\Sphere(V)$ pointwise. In other words, a projective reflection is $R(x)=x-\alpha(x)v$ where $v \in V$ is an eigenvector with eigenvalue -1, and $\alpha$ is its linear functional on the dual space $V^*$ with $\alpha(v)=2$. For simplicity, $R=\Id-\alpha \otimes v$. A \textit{convex polyhedral cone} is $K=\bigcap_i\{x\in V \; | \; \alpha_i(x) \geq 0\}$ for a finite number of reflections $R_i = \Id -\alpha_i \otimes v_i$. A \textit{projective polytope} $P=\Sphere(K)$ is a spherical projection of $K$, which is a properly convex subset of $\Sphere(V)$ with nonempty interior.

\begin{definition} \label{def:Coxeter polytope}

A \textit{(projective) Coxeter polytope} is a projective polytope $P$ with the set of its sides $\mathcal{S}=\{S_i \; | \; i=1,\cdots,m\}$ and projective reflections $R_i = \Id - \alpha_i \otimes v_i$ such that each side $S_i$ is contained in $\Sphere(\ker(\alpha_i))$ and satisfying the following conditions. For any $S_i, S_j \in \mathcal{S}$,

\begin{itemize}
    \item[(C1)] $\alpha_i(v_j)=2$ $\Leftrightarrow$ $i=j$
    \item[(C2)] $\alpha_i(v_j)\leq 0$ for every $i\neq j$
    \item[(C3)] $\alpha_i(v_j)=0$ $\Rightarrow$ $\alpha_j(v_i)=0$
    \item[(C4)] $\alpha_i(v_j) \alpha_j(v_i) \geq 4$ or $\alpha_i(v_j) \alpha_j(v_i) = 4\cos^2(\frac{\pi}{n_{ij}})$ for some $n_{ij}\in \Z_{\geq2}$.
    \item[(C5)] The interior of the convex polyhedral cone $K^\circ$ is not empty.
\end{itemize}

In this paper, we will say \textit{Vinberg's conditions} for these five conditions.
    
\end{definition}

 Note that $\pi/n_{ij}$ in (C4) is exactly the same as the dihedral angle between $S_i$ and $S_j$. In addition, $\alpha_i(v_j) \alpha_j(v_i) \geq 4$ holds if and only if the dihedral angle is zero, which can be accepted since $n_{ij}$ is infinite. Therefore, we call this $n_{ij}\in \Z_{\geq2}\cup \{\infty\}$ an \textit{edge order}. We can easily see that a Coxeter polytope is determined by $(\alpha_1, \cdots, \alpha_f, v_1 \cdots, v_f) \in (V^*)^f \times V^f$ where $f$ is the number of sides of the Coxeter polytope. For simplicity, denote $(\alpha,v)$. To understand this tuple easily, we define the \textit{Cartan matrix} $M$ such that $(M)_{ij}=\alpha_i(v_j)$. To understand the given Coxeter polytope easily, we use a figure of the polytope whose ridges are labeled with dihedral angles, called a \textit{labeled polytope}.

Let $P$ be a Coxeter polytope whose every edge order is finite. We can obtain an orbifold by silvering the sides and removing some vertices of $P$. The condition of those vertices can be explained as the followings which are all equivalent.
\begin{itemize}
    \item The vertex of $P$ cannot induce an orbifold chart centered at the vertex.
    \item The action group $G$ of an orbifold chart $\tilde{U}/G \cong U$ centered at the vertex cannot be a finite group.
    \item From the labeled polytope, the vertex has more than three edges ending or has exactly three edges with labeling except
    \[
    (2,2,n), (2,3,3), (2,3,4), (2,3,5)
    \]
    for any $n \geq 2$.
\end{itemize}

In this paper, we also remove every edge whose order is infinite because we cannot find the finite action group to make an orbifold chart centered at such points. 

\begin{definition} \label{Coxeter orbifold}

Let $P$ be a Coxeter polytope, and $G$ be the group generated by projective reflections of $P$. For a point $x\in P$, let $G_x$ be a stabilizer group of $G$ with respect to $x$. Then, a \textit{Coxeter orbifold} $\hat{P}$ is an orbifold satisfying the followings.

\begin{itemize}
    \item[(1)] If $G_x$ is finite, $x\in|\hat{P}|$ and we have an orbifold chart centered at $x$ using $G_x$.
    \item[(2)] If $G_x$ is infinite, $x\notin |\hat{P}|$.
\end{itemize}
    
\end{definition}

Note that the orbifold fundamental group of the Coxeter orbifold $\pi_1(\hat{P})$ becomes the Coxeter group $\Gamma$. Moreover, the generators of $\Gamma$ are exactly the same as the projective reflections of the Coxeter polytope $P$.

\subsubsection{Vinberg's theory}
\label{ch:2.1.4}

Let $P$ be a $f$-sided Coxeter polytope whose sides are $S_i$ for $i=1,\cdots,f$. Recall that if the Coxeter polytope $P$ is given, then we can compute the convex polyhedral cone $K$ and each projective reflection $R_i$. Vinberg studied the properties of $K$ and $R_i$'s when the Coxeter polytope $P$ is given in \cite{Vin71}.

In this section, we will describe some propositions of Vinberg's works which will be used in the proof of the main theorem of this paper. More precisely, these propositions give us the conditions of nonemptiness of the $P$, in other words, (C5) of Vinberg's conditions.

\begin{proposition}[Proposition 13, \cite{Vin71}]\label{Vin71 Prop13}
Let $K$ be the convex polyhedral cone, and $M$ be the Cartan matrix. $L_\alpha$ be the space of linear relations among the $\alpha_i$, and $\L_r^+(M)$ be the convex cone in $L_r(M)$ consisting of the relations with non-negative coefficients, where $L_r(M)$ be the space of linear relations among rows of $M$. 
Assume that $M$ has positive diagonal entries. Then, $K$ is a $f$-sided cone if and only if
\[
L_\alpha \cap L_r^+(M) = \{0\}
\]
\end{proposition}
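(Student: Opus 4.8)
The plan is to recast ``$K$ is $f$-sided'' as a linear-algebraic dichotomy and then apply the theorems of the alternative of Farkas and Gordan. I begin with two reductions. First, any genuine relation $\sum_i c_i\alpha_i=0$ among the linear functionals is in particular a relation among the rows of $M$ (evaluate at $v_1,\dots,v_f$), so $L_\alpha\subseteq L_r(M)$; consequently $L_\alpha\cap L_r^+(M)$ is nothing but the set of \emph{nonnegative} linear relations among the $\alpha_i$, and the hypothesis $L_\alpha\cap L_r^+(M)=\{0\}$ says exactly that $c=0$ is the only $c\ge 0$ with $\sum_i c_i\alpha_i=0$. Second, since $M$ has positive diagonal, $\alpha_k(v_k)=M_{kk}>0$, so every $\alpha_k\ne 0$; and --- crucially --- the off-diagonal entries $M_{ij}=\alpha_i(v_j)$ for $i\ne j$ are $\le 0$ (this is (C2) for the Cartan matrix of a Coxeter polytope, and it, not merely the positivity of the diagonal, is what drives the proof). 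Finally I will use the standard fact that ``$K$ is an $f$-sided cone'' means precisely that $\operatorname{int}K\ne\emptyset$ and no defining inequality $\alpha_k(x)\ge 0$ is redundant, i.e.\ implied by the rest.

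For ``$\Leftarrow$'' I argue contrapositively: assume $K$ is not $f$-sided. If $\operatorname{int}K=\emptyset$, then the strict system $\{x:\alpha_i(x)>0\ \forall i\}$ is empty --- any solution of it would be an interior point of $K$ --- so Gordan's theorem yields $c\ge 0$, $c\ne 0$, with $\sum_i c_i\alpha_i=0$, an element of $L_\alpha\cap L_r^+(M)\setminus\{0\}$. Otherwise $\operatorname{int}K\ne\emptyset$ but some $\alpha_k(x)\ge 0$ is redundant; Farkas' lemma then gives $\alpha_k=\sum_{i\ne k}\lambda_i\alpha_i$ with all $\lambda_i\ge 0$, whence $0<M_{kk}=\alpha_k(v_k)=\sum_{i\ne k}\lambda_i M_{ik}\le 0$, a contradiction --- so this subcase does not occur. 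Either way $L_\alpha\cap L_r^+(M)\ne\{0\}$.

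For ``$\Rightarrow$'', again by contraposition: suppose $c\ge 0$, $c\ne 0$, and $\sum_i c_i\alpha_i=0$, and fix $k$ with $c_k>0$. For any $x\in K$ one has $\alpha_i(x)\ge 0$ for all $i$, so $0=\sum_i c_i\alpha_i(x)$ is a sum of nonnegative numbers; each summand must vanish, in particular $\alpha_k(x)=0$. Hence $K\subseteq\ker\alpha_k$, a hyperplane since $\alpha_k\ne 0$, so $\operatorname{int}K=\emptyset$ and $K$ is not $f$-sided.

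The delicate point --- and the reason the hypotheses matter --- is the second subcase of ``$\Leftarrow$'': a full-dimensional cone may well have a redundant defining inequality with no nonnegative relation among the $\alpha_i$ present (think $\alpha_1=\alpha_2+\alpha_3$ with $\alpha_2,\alpha_3$ independent), so the equivalence genuinely fails for an arbitrary $M$ with positive diagonal. It is the sign pattern of the Cartan matrix --- positive diagonal against nonpositive off-diagonal --- that excludes this, via the evaluation $\alpha_k(v_k)>0$ versus $\sum_{i\ne k}\lambda_i M_{ik}\le 0$. I would therefore flag explicitly that $M$ is the Cartan matrix of a Coxeter polytope and keep this sign input isolated in the write-up.
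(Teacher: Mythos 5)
The paper does not prove this statement: it is quoted from Vinberg \cite{Vin71} and used as a black box, so there is no in-paper argument to compare yours against; I can only judge your proof on its own terms, and it is correct and essentially complete. The reduction of ``$f$-sided'' to ``nonempty interior and no redundant inequality'' is the right reading of Vinberg's definition; Gordan's theorem disposes of the empty-interior case, the homogeneous Farkas lemma disposes of redundancy, and the forward direction is the elementary positivity argument ($\sum_i c_i\alpha_i(x)=0$ with all summands nonnegative forces $K\subseteq\ker\alpha_k$). Your most valuable observation is that the proposition as transcribed here is literally false under the stated hypothesis alone (positive diagonal): the example $\alpha_1=\alpha_2+\alpha_3$ gives a full-dimensional cone with a redundant side yet no nonzero nonnegative relation among the $\alpha_i$. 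The equivalence holds only because $M$ is a Cartan matrix satisfying (C2), $\alpha_i(v_j)\le 0$ for $i\ne j$, which is a standing hypothesis in Vinberg's setting and in Definition \ref{def:Coxeter polytope} of this paper but is omitted from the statement as reproduced above; your evaluation $0<M_{kk}=\sum_{i\ne k}\lambda_i M_{ik}\le 0$ is exactly where that sign condition enters, and you are right to insist it be made explicit. The only step I would spell out further is the routine fact that for a full-dimensional cone an irredundant inequality cuts out a facet and distinct irredundant $\alpha_i$ cut out distinct facets (otherwise $\ker\alpha_i=\ker\alpha_j$ and, the negative-multiple case being excluded by full-dimensionality, one of the two is redundant), so that ``all $f$ inequalities irredundant'' really is equivalent to ``exactly $f$ sides.''
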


\begin{proposition}[Proposition 14, \cite{Vin71}]\label{Vin71 Prop14}

Let $P$ be a Coxeter polytope. Let $S_i$, $S_j$ be non-adjacent sides of $P$, and $\mathcal{S}$ be a subset of the index set of $P$ consisting of the indices of adjacent sides to $S_j$. Then, there exist numbers $d_j > 0$ and $d_s \geq 0$ ($s\in \mathcal{S}$) such that
\[
\alpha_i = -d_j \alpha_j + \sum_{s\in\mathcal{S}} d_s \alpha_s
\]

\end{proposition}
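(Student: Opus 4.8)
The plan is to follow Vinberg's original argument: restrict everything to the hyperplane carrying the side $S_j$, apply Farkas' lemma (equivalently, polar-cone duality) there, then lift the resulting relation back to $V$ and pin down the sign of the $\alpha_j$-coefficient using a point in the relative interior of $S_i$.

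First I would set $H_j = \ker\alpha_j$ and let $K_j = K \cap H_j$ be the face of the cone $K$ corresponding to $S_j$. Since $S_j$ is a genuine side of $P$ and $P$ has nonempty interior (so $K$ is full-dimensional in $V$), $K_j$ is full-dimensional in the $n$-dimensional space $H_j$. Write $\beta_k = \alpha_k|_{H_j}$ for each index $k$. The key combinatorial observation is that the facets of the cone $K_j$ correspond exactly to the indices $s \in \mathcal{S}$, that is, to the sides $S_s$ adjacent to $S_j$: indeed $S_s \cap S_j$ is a ridge of $P$ precisely when $K \cap \ker\alpha_s \cap \ker\alpha_j$ has codimension one in $H_j$. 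Consequently $K_j = \{x \in H_j : \beta_s(x) \geq 0 \text{ for all } s \in \mathcal{S}\}$, because a full-dimensional polyhedral cone equals the intersection of the halfspaces supporting its facets.

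Next, since $\alpha_i \geq 0$ on all of $K$, we get $\beta_i \geq 0$ on $K_j$; by Farkas' lemma applied inside $H_j$ there exist $d_s \geq 0$ ($s \in \mathcal{S}$) with $\beta_i = \sum_{s \in \mathcal{S}} d_s \beta_s$. Then $\alpha_i - \sum_{s \in \mathcal{S}} d_s \alpha_s$ is a linear functional on $V$ vanishing on the hyperplane $H_j$, hence equals $-d_j \alpha_j$ for a unique scalar $d_j \in \R$ (the annihilator of a hyperplane is spanned by $\alpha_j$). This yields the asserted identity $\alpha_i = -d_j\alpha_j + \sum_{s\in\mathcal{S}} d_s\alpha_s$. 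Finally, to see $d_j > 0$, pick a point $z$ in the relative interior of the side $S_i$, so that $\alpha_i(z) = 0$ while $\alpha_k(z) > 0$ for every $k \neq i$ (using that the side description of $P$ is irredundant; cf. Proposition \ref{Vin71 Prop13}). Evaluating the identity at $z$ gives $d_j\alpha_j(z) = \sum_{s\in\mathcal{S}} d_s\alpha_s(z) \geq 0$; since $\alpha_j(z) > 0$ this forces $d_j \geq 0$, and if $d_j = 0$ then each term $d_s\alpha_s(z)$ vanishes with $\alpha_s(z) > 0$, so all $d_s = 0$ and $\alpha_i \equiv 0$, a contradiction. Hence $d_j > 0$.

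I expect the main obstacle to be the second step: carefully justifying that the walls of the non-adjacent sides become redundant after restriction to $H_j$, so that $K_j$ is genuinely cut out by the $\beta_s$ with $s \in \mathcal{S}$ alone. This is exactly what forces the Farkas conclusion to be a combination involving only the adjacent sides rather than some a priori unknown collection of walls, and it requires being precise about the meaning of adjacency, the dimension count identifying ridges of $P$ with facets of $K_j$, and the full-dimensionality of $K_j$ inside $H_j$. The remaining steps — Farkas, lifting through the hyperplane, and the sign argument — are then routine.
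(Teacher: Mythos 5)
The paper does not prove this statement; it is quoted verbatim from Vinberg (Proposition 14 of \cite{Vin71}) as a black box, so there is no in-paper argument to compare against. Your reconstruction is correct and is essentially Vinberg's own proof: restrict to the facet $K_j = K \cap \ker\alpha_j$, identify its facets with the walls adjacent to $S_j$ (so the non-adjacent inequalities are redundant there), apply conic duality/Farkas inside $\ker\alpha_j$, lift the relation through the hyperplane to produce the $\alpha_j$-term, and fix the sign of $d_j$ by evaluating at a relative-interior point of $S_i$ (which lies outside $\mathcal{S}$ precisely because $S_i$ and $S_j$ are non-adjacent). All the steps, including the dimension count identifying facets of $K_j$ with ridges of $P$ and the strict positivity $d_j>0$, are justified.
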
 

We will use Proposition \ref{Vin71 Prop13} and Proposition \ref{Vin71 Prop14} in the proof of the general position case and the concurrent case, respectively, to find the space generating the Coxeter polytope with nonempty interior. Proposition \ref{Vin71 Prop13} will be used in the chapter \ref{ch:3.2} to show that any $\alpha_4$ linearly independent on the other $\alpha_j$'s induce the Coxeter polytope with nonempty interior. Proposition \ref{Vin71 Prop14} will be used in the chapter \ref{ch:3.1} and chapter \ref{ch:3.2} to find the condition when $\alpha_4$ does induce the nonempty interior of the Coxeter polytope. Furthermore, we will use the following proposition to compute the precise solution space in chapter \ref{ch:3.2}.

\begin{proposition}[Proposition 18, \cite{Vin71}]\label{Vin71 Prop18}

Let $\Gamma$ be a Coxeter group, $K$ be the convex polyhedral cone, and $M$ be the Cartan matrix. Then the followings are equivalent.

\begin{itemize}
    \item $K$ is strictly convex, that is, there is no hyperplane of $V$ containing two sides of $K$.
    \item $\operatorname{span}\{\alpha_i\}=V^*$.
\end{itemize}

$\Gamma$ satisfying one of these conditions is called \textit{reduced}. If $\Gamma$ is reduced, then
\[
\dim(\operatorname{span}\{v_i\}) = \operatorname{rank}(M).
\]
    
\end{proposition}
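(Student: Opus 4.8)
The plan is to handle the proposition's two assertions separately: the equivalence ``$K$ strictly convex $\Leftrightarrow\operatorname{span}\{\alpha_{i}\}=V^{*}$'', and then, for reduced $\Gamma$, the identity $\dim(\operatorname{span}\{v_{i}\})=\operatorname{rank}(M)$, which I will deduce quickly from it.

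For the equivalence the key device is the evaluation map $T\colon V\to\R^{f}$, $T(x)=(\alpha_{1}(x),\dots,\alpha_{f}(x))$, where $f$ is the number of sides, so that $K=T^{-1}(\R_{\geq 0}^{f})$ and the $j$-th column of the Cartan matrix $M$ is $T(v_{j})$. Now $\ker T=\bigcap_{i}\ker\alpha_{i}$ is exactly the annihilator in $V$ of $\operatorname{span}\{\alpha_{i}\}\subseteq V^{*}$, so $\ker T=\{0\}$ if and only if $\operatorname{span}\{\alpha_{i}\}=V^{*}$. At the same time $\ker T=K\cap(-K)$ is the lineality space of the cone $K$: any $x\in\ker T$ has $\pm x\in K$, and any line through the origin lying in $K$ must sit inside $\{x:\alpha_{i}(x)=0\ \forall i\}=\ker T$. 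Thus ``$K$ contains no nonzero linear subspace'' is equivalent to $\operatorname{span}\{\alpha_{i}\}=V^{*}$. It then remains to reconcile ``$K$ contains no nonzero subspace'' with the face-theoretic formulation in the statement; for this I would use standard facts about full-dimensional polyhedral cones — a side $S_{i}$ spans its supporting hyperplane $\ker\alpha_{i}$, and every face of $K$ contains the lineality space — together with Vinberg's sign conditions (C1)--(C2) and nondegeneracy (C5), and in the non-pointed case I would descend to the quotient $V/\ker T$, where $K$ maps to a pointed cone carrying the induced side structure, and read off the obstructing hyperplane there. I expect this last translation — matching the two meanings of strict convexity and disposing of the degenerate case by passing to $V/\ker T$ — to be the main obstacle; everything else is formal linear algebra.

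Granting the equivalence, the rank formula is immediate. If $\Gamma$ is reduced then $T$ is injective. Since the $j$-th column of $M$ is $(\alpha_{1}(v_{j}),\dots,\alpha_{f}(v_{j}))=T(v_{j})$, the column space of $M$ equals $T\big(\operatorname{span}\{v_{1},\dots,v_{f}\}\big)$, which has dimension $\dim(\operatorname{span}\{v_{i}\})$ because $T$ is injective; hence $\operatorname{rank}(M)=\dim(\operatorname{span}\{v_{i}\})$, as claimed.
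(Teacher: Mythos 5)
This proposition is quoted from Vinberg \cite{Vin71} and the paper supplies no proof of it, so there is nothing internal to compare your argument against; I assess it on its own terms. The substance of what you do is correct and is the standard argument: the identity $K\cap(-K)=\bigcap_i\ker\alpha_i=\ker T$ identifies the lineality space of $K$ with the annihilator of $\operatorname{span}\{\alpha_i\}$ in $V$, so $K$ contains no line if and only if the $\alpha_i$ span $V^*$; and since the $j$-th column of $M$ is $T(v_j)$, injectivity of $T$ gives $\operatorname{rank}(M)=\dim T(\operatorname{span}\{v_i\})=\dim\operatorname{span}\{v_i\}$. This last computation is the only part of the proposition the paper actually uses (in Section \ref{ch:3.2}, to force $\det M=0$ when $a_4\neq 0$ and $v_{44}=0$), and your proof of it is complete.

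The one step you defer --- reconciling ``$K$ contains no nonzero linear subspace'' with ``no hyperplane of $V$ contains two sides of $K$'' --- is indeed where the trouble lies, but the trouble is in the statement as transcribed, not in your argument. Under conditions (C1)--(C5) each side $S_i$ is a facet of the full-dimensional cone $K$ and therefore spans the hyperplane $\ker\alpha_i$; a hyperplane containing two sides would force $\ker\alpha_i=\ker\alpha_j$, hence $\alpha_i$ proportional to $\alpha_j$ with a negative constant by (C1)--(C2), contradicting (C5). So the literal condition ``no hyperplane contains two sides'' holds automatically and cannot be equivalent to $\operatorname{span}\{\alpha_i\}=V^*$; the paper's own concurrent case is a counterexample, since there the four $\alpha_j$ span only a $3$-dimensional subspace of $V^*$ while the four sides lie in four distinct hyperplanes. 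The condition Vinberg intends by ``strictly convex'' is that $K$ contains no line (equivalently, $\Sphere(K)$ contains no pair of antipodal points), which is precisely the pointedness you prove equivalent to $\operatorname{span}\{\alpha_i\}=V^*$. You should therefore not invest effort in the descent to $V/\ker T$ to rescue the face-theoretic phrasing; taking pointedness as the definition of strict convexity, your proof is complete as written.
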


\subsection{Deformation spaces and character varieties}
\label{ch:2.2}

\subsubsection{Deformation spaces}
\label{ch:2.2.1}

Let $P$ be a $m$-dimensional polytope. A \textit{$k$-face} of $P$ is defined inductively. The $m$-face of $P$ is $P$ itself. We define $(m-1)$-face which is a side of $m$-face. Note that a $(m-1)$-face is a side of $P$, a $(m-2)$-face is a ridge of $P$, a 1-face of $P$ is an edge of $P$, and a 0-face of $P$ is a vertex of $P$. A \textit{flag of faces} of $P$ is a sequence $(F_0, F_1, \cdots, F_m)$ of faces of $P$ such that $F_k$ is a $k$-face of $P$ and $F_k$ is a side of $F_{k+1}$ for each $k$. We say that two polytopes $P$ and $Q$ are \textit{combinatorially equivalent polytopes} if there is a bijection between $P$ and $Q$ which preserves the flags of faces. Then, a \textit{combinatorial polytope} is a combinatorially equivalence class.

Let $P$ be a $n$-dimensional Coxeter polytope and $\hat{P}$ be the Coxeter orbifold. Since we want to focus on the deformation space of $\hat{P}$, not of $P$, we need the combinatorial equivalence of Coxeter orbifolds. We say that two Coxeter orbifolds $\hat{P}$ and $\hat{Q}$ are \textit{combinatorially equivalent orbifolds} if there is a bijection between $P$ and $Q$ that preserves the subsequence of flags of faces except for vertices and edges which are removed in the induced Coxeter orbifold. Let the \textit{projective isomorphism of Coxeter polytopes} be a projective automorphism of $\Sphere^n$ preserving the flags of faces. 

\begin{definition}\label{def:deformation space}

Let $\widetilde{\Def}(\hat{P})$ be the space consisting of isomorphism classes of Coxeter polytopes whose induced Coxeter orbifolds are combinatorially equivalent to $\hat{P}$. The \textit{deformation space (of convex real projective structures) of Coxeter orbifold $\hat{P}$}, denoted by $\Def(\hat{P})$, is the quotient space
\[
\Def(\hat{P}) := \widetilde{\Def}(\hat{P}) / \SL_\pm(n+1,\R)
\]
, that is, the space of projective isomorphism classes of Coxeter polytopes whose induced orbifolds are combinatorially equivalent to the Coxeter orbifold $\hat{P}$ if we consider $\SL_\pm(n+1,\R)$ as the automorphism group of convex real projective structures.

\end{definition}

We can topologize $\Def(\hat{P})$ by identifying as a subset of the quotient space
\[
((\R^{n+1})^*)^f \times (\R^{n+1})^f / \SL_\pm(n+1,\R) \times \R_+^f
\]
where $f$ is the number of sides of $P$. The action of the group is
\[
A \cdot (R_1, \cdots, R_f) = (AR_1A^{-1}, \cdots, AR_fA^{-1})
\]
and
\[
c_i \cdot (\alpha_i,v_i) = (c_i^{-1}\alpha_i, c_iv_i)
\]
for $A \in \SL_\pm(n+1,\R)$ and $c_i \in \R_+$.

\subsubsection{Character varieties}
\label{ch:2.2.2}

Let $\hat{P}$ be a Coxeter orbifold, $\Gamma=\pi_1(\hat{P})$, and $G=\SL_\pm(n,\R)$. Denote $\Hom(\Gamma, G)$ be the set of group homomorphisms from $\Gamma$ to $G$. Note that $G$ acts on $\Hom(\Gamma, G)$ by conjugation. Then, $\Hom(\Gamma, G)/G$ is the space of orbits of $G$ in $\Hom(\Gamma, G)$. Now, consider the space of closed orbits of $G$ in $\Hom(\Gamma, G)$.

\begin{definition} \label{def:character variety}

The \textit{character variety} is the space of equivalence classes
\[
\chi(\hat{P},G)=\{[\rho] \; | \; \rho \in \Hom(\Gamma,G)\}
\]
,where the equivalence relation is given if and only if their orbit closures intersect.
\[
\rho_1 \sim \rho_2 \Leftrightarrow G_{\rho_1} \cap G_{\rho_2}\neq \emptyset
\]

The construction of the character variety is called the \textit{real geometric invariant theory quotient}, simply \textit{$\R$-GIT quotient}, denoted by $\Hom(\Gamma, G) \sslash G$.
    
\end{definition}

A Lie group $G$ is \textit{reductive} if its unipotent radical is trivial, and \textit{linearly reductive} if every rational representation is semisimple.

\begin{lemma}[Lemma 2.1, \cite{CLM18} (original in \cite{Ric88})]\label{CLM18 Lemma2.1}

Let $G$ be a linearly reductive group (which includes $\SL_\pm(n+1,\R)$), and $\rho \in \Hom(\Gamma,G)$ be a representation. Then, the orbit $G \cdot \rho$ is closed if and only if the Zariski closure of $\rho(\Gamma)$ is a reductive subgroup of representation. Moreover,
\[\chi(\hat{P},G) = \{[\rho]\in \Hom(\Gamma,G)/G \; | \; \rho \text{ is reductive}\}.\]
    
\end{lemma}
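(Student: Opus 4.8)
The plan is to realize $\Hom(\Gamma,G)$ as an affine $G$-variety and run geometric invariant theory — the Hilbert–Mumford criterion together with its real-field refinements (Birkes, Richardson–Slodowy). Fix the (finite) generating set $R_1,\dots,R_m$ of the Coxeter group $\Gamma=\pi_1(\hat P)$; evaluation $\rho\mapsto(\rho(R_1),\dots,\rho(R_m))$ identifies $\Hom(\Gamma,G)$ with the Zariski-closed subset of $G^m$ cut out by the Coxeter relations, on which $G=\SL_\pm(n+1,\R)$ acts by simultaneous conjugation. Since $G$ is a reductive real algebraic group, the categorical quotient $\Hom(\Gamma,G)/\!/G$ exists; each of its points is the image of a \emph{unique} closed $G$-orbit, namely the orbit of minimal dimension in its fiber, which moreover lies in the closure of every other orbit in that fiber. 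Granting the first assertion of the lemma, this is precisely the ``moreover'': the equivalence classes in $\chi(\hat P,G)$ (orbits whose closures intersect) are in bijection with closed orbits, hence — by the first assertion — with reductive $\rho$, so $\chi(\hat P,G)=\{[\rho]\in\Hom(\Gamma,G)/G\mid \rho\text{ reductive}\}$.

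For ``$G\cdot\rho$ closed $\Rightarrow$ $\rho$ reductive'', argue contrapositively. Suppose $H:=\overline{\rho(\Gamma)}^{\,\mathrm{Zar}}$ is not reductive, so its unipotent radical $U:=R_u(H)$ is nontrivial. By the Borel–Tits theorem there is a \emph{proper} parabolic $P\subsetneq G$ with $U\subseteq R_u(P)$ and $H\subseteq N_G(U)\subseteq P$ (here $H$ normalizes its characteristic subgroup $U$). Choose a one-parameter subgroup $\lambda$ with $P=P_\lambda=\{g\mid \lim_{t\to0}\lambda(t)g\lambda(t)^{-1}\text{ exists}\}$ and $R_u(P)=U_\lambda=\{g\mid \lim_{t\to0}\lambda(t)g\lambda(t)^{-1}=e\}$. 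Then $\rho_\lambda:=\lim_{t\to0}\lambda(t)\,\rho\,\lambda(t)^{-1}$ exists — it is the composite of $\rho$ with the Levi projection $P_\lambda\twoheadrightarrow L_\lambda$ — so $\rho_\lambda\in\overline{G\cdot\rho}$; but $\overline{\rho_\lambda(\Gamma)}$ lies in the reductive group $L_\lambda$, hence is reductive, whereas $H$ is not, so $\rho_\lambda\notin G\cdot\rho$ and the orbit is not closed. (Concretely for $G=\SL_\pm(n+1,\R)$: ``$\rho$ reductive'' just means the $\Gamma$-module $\R^{n+1}$ is semisimple; $P$ is the stabilizer of a $\Gamma$-invariant subspace admitting no invariant complement, and $\rho_\lambda$ is the associated graded representation, manifestly not isomorphic to $\rho$.)

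For the converse, ``$\rho$ reductive $\Rightarrow$ $G\cdot\rho$ closed'', I would invoke the Kempf–Ness picture in its real form: fixing a maximal compact $K\subseteq G$ and the induced $K$-invariant norm on $G^m$, an orbit is closed if and only if it contains a point of minimal norm (equivalently, where the associated gradient/moment map vanishes). A tuple generating a group with reductive Zariski closure $H$ admits an $H$-invariant positive definite form; conjugating $\rho$ so that this form is standard places the orbit through a minimal-norm point, hence it is closed. Alternatively one can run Hilbert–Mumford backwards: for any $\lambda$ with $\rho_\lambda$ defined one has $H\subseteq P_\lambda$, and since in characteristic zero a reductive subgroup is $G$-completely reducible, $H$ is $G$-conjugate inside $P_\lambda$ into the Levi $L_\lambda$; unwinding this shows $\rho_\lambda\in G\cdot\rho$, so no one-parameter limit escapes the orbit, and by Birkes' theorem (every point of $\overline{G\cdot\rho}$ in the unique closed orbit is reached by a real one-parameter limit from $\rho$) the orbit is already closed.

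The main obstacle is carrying the converse through honestly over $\R$: the Hilbert–Mumford and Kempf–Ness machinery is cleanest over algebraically closed fields, and over $\R$ one must track real points carefully — using that $\SL_\pm(n+1,\R)$ is Zariski-dense in its complexification and that closedness of real orbits is governed by the \emph{real} Kempf–Ness function (Richardson–Slodowy, Bremigan) and Birkes' real one-parameter subgroup criterion. The ancillary identification ``reductive subgroup $\Leftrightarrow$ $G$-completely reducible'' is automatic in characteristic zero and, for $G=\SL_\pm(n+1,\R)$, collapses to the elementary fact that a subgroup is reductive iff its tautological $(n+1)$-dimensional representation is semisimple — which keeps the argument self-contained for the case needed in this paper, where $\rho$ arises from a Coxeter polytope and the relevant parabolics are stabilizers of flags in $\R^{n+1}$.
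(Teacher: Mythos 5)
The paper does not prove this lemma at all: it is quoted from \cite{CLM18} (Lemma 2.1), which in turn attributes it to Richardson \cite{Ric88}, so there is no internal proof to compare against. Your proposal reconstructs the standard GIT proof of Richardson's theorem, and in outline it is the right argument: identifying $\Hom(\Gamma,G)$ with a closed conjugation-invariant subvariety of $G^m$ via the generators, proving ``closed $\Rightarrow$ reductive'' contrapositively via Borel--Tits and a one-parameter degeneration onto a Levi (for $G=\SL_\pm(n+1,\R)$ this is just passing to the associated graded of a non-split invariant filtration of $\R^{n+1}$, which is visibly not conjugate to $\rho$), and deriving the ``moreover'' from the uniqueness of the closed orbit in each orbit-closure class, which is exactly the content of Theorem \ref{Por23 Thm2.1}. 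Those parts are sound, and you correctly flag that the real-field versions of the tools are what is needed.

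The one step that fails as written is in the converse direction. You assert that a tuple generating a group with reductive Zariski closure $H$ ``admits an $H$-invariant positive definite form.'' This is false for noncompact $H$: the tautological representation of $\SL(2,\R)$ is reductive, indeed irreducible, but preserves no positive definite form. The statement you need, and the one Richardson's argument actually uses, is Mostow's theorem that a reductive real algebraic subgroup of $\GL(n+1,\R)$ is conjugate to one stable under the Cartan involution $g\mapsto (g^{\operatorname{T}})^{-1}$; a tuple of generators lying in such a self-adjoint subgroup is then a minimal vector for the $\Ortho(n+1)$-invariant norm, and the real Kempf--Ness theorem of Richardson--Slodowy closes the orbit. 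Your alternative route in the same paragraph, via $G$-complete reducibility and Birkes' one-parameter criterion, sidesteps this and is viable, so the repair is local; but the invariant-form claim is a genuine error rather than an imprecision and should be replaced by the self-adjointness statement.
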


\begin{theorem}[Theorem 2.1, \cite{Por23} (original in \cite{Lun75})]\label{Por23 Thm2.1}
    Let $G$ be a Lie group acting on $\Hom(\Gamma, G)$. Then the closure of each orbit by conjugation of $G$ has exactly one closed orbit. Moreover, $\chi(\hat{P}, G)$ is homeomorphic to a real semi-algebraic set.
\end{theorem}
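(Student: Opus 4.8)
The plan is to recognize the statement as an instance of \emph{real} geometric invariant theory for a reductive group acting on a real affine variety, and then to invoke Luna's slice theorem together with the Tarski--Seidenberg theorem. Throughout I take $G$ to be linearly reductive, which is the setting in which the statement is used (e.g. $G=\SL_\pm(n+1,\R)$, as recorded in Lemma \ref{CLM18 Lemma2.1}).

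First I would check that $\Hom(\Gamma,G)$ is a real affine algebraic set on which $G$ acts algebraically. Since $\hat P$ has finitely many sides, $\Gamma=\pi_1(\hat P)$ is generated by the $f$ reflections $R_1,\dots,R_f$, and realizing $G=\SL_\pm(n+1,\R)$ as the Zariski-closed subset $\{A\in\Mat(n+1,\R):\det A=\pm 1\}$ of $\R^{(n+1)^2}$, the evaluation $\rho\mapsto(\rho(R_1),\dots,\rho(R_f))$ identifies $\Hom(\Gamma,G)$ with the common zero locus in $G^f$ of the (polynomial) relations of $\Gamma$; by Noetherianity finitely many of them cut it out, so it is a real affine variety, and the conjugation action of $G$ is algebraic. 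Because $G$ is linearly reductive, the invariant ring $\R[\Hom(\Gamma,G)]^G$ is finitely generated, say by $p_1,\dots,p_m$; these assemble into a polynomial map $\Phi=(p_1,\dots,p_m)\colon\Hom(\Gamma,G)\to\R^m$ that is constant on $G$-orbits and therefore on their closures.

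The first assertion then rests on the separation property of reductive group actions: two disjoint $G$-invariant Zariski-closed subsets of $\Hom(\Gamma,G)$ have disjoint images under $\Phi$. Over $\R$ this is the nontrivial input (it is part of Luna's theorem, and can also be extracted from the real Kempf--Ness picture). Granting it, the argument is short: inside $\overline{G\cdot\rho}$ the union of the $G$-orbits of minimal dimension is nonempty and Zariski-closed (the frontier of an orbit is a union of orbits of strictly smaller dimension), so some closed orbit lies in $\overline{G\cdot\rho}$; and if $O_1\neq O_2$ were two closed orbits inside $\overline{G\cdot\rho}$, separation would force $\Phi(O_1)\neq\Phi(O_2)$, contradicting the fact that $\Phi$ is constant on $G\cdot\rho$ and hence on $\overline{G\cdot\rho}$. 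Thus each orbit closure contains a unique closed orbit; moreover $\overline{G\cdot\rho_1}\cap\overline{G\cdot\rho_2}\neq\emptyset$ holds iff $\rho_1,\rho_2$ share this common closed orbit iff $\Phi(\rho_1)=\Phi(\rho_2)$, so the equivalence classes defining $\chi(\hat P,G)$ are exactly the fibres of $\Phi$. For the second assertion I would then identify $\chi(\hat P,G)$ with $\Phi(\Hom(\Gamma,G))\subseteq\R^m$: since $\Hom(\Gamma,G)$ is a real algebraic set and $\Phi$ is polynomial, Tarski--Seidenberg shows the image is a semialgebraic subset of $\R^m$, and the induced continuous bijection $\chi(\hat P,G)\to\Phi(\Hom(\Gamma,G))$ is a homeomorphism because Luna's slice theorem makes $\Phi$ open onto its image (locally it is the quotient of a slice by a compact stabilizer).

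The main obstacle -- and the reason the result is credited to Luna rather than being a one-line corollary of GIT -- is precisely the passage from $\C$ to $\R$: over $\R$ the naive affine quotient need not see all closed orbits, and both the separation of closed orbits by invariants and the upgrade from a continuous bijection to a homeomorphism onto the semialgebraic image genuinely use the differentiable slice theorem. So a self-contained proof reduces to verifying that the real reductive group $\SL_\pm(n+1,\R)$, acting algebraically on the affine variety $\Hom(\Gamma,G)$, satisfies Luna's hypotheses; everything else is formal.
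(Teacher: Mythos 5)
The paper offers no proof of this statement; it is quoted from Porti, who in turn attributes it to Luna, so there is no in-paper argument to compare your route against. Judged on its own terms, your proposal has a genuine gap at its central step. You assert that, because $G$ is linearly reductive, two disjoint $G$-invariant Zariski-closed subsets of $\Hom(\Gamma,G)$ have disjoint images under the map $\Phi$ assembled from generators of the invariant ring. That separation lemma is the correct statement over $\C$, but it is \emph{false} over $\R$, and this failure is exactly the reason the theorem is credited to Luna rather than to classical GIT. A standard counterexample: $\SL_2(\R)$ (or $\SL_\pm(2,\R)$) acting on binary quadratic forms has invariant ring generated by the discriminant, yet $x^2+y^2$ and $-(x^2+y^2)$ lie in distinct closed orbits with the same discriminant; similarly, in $\Hom(\Z,\SL_2(\R))\cong\SL_2(\R)$ the rotations $R_\theta$ and $R_{-\theta}$ are non-conjugate elements of closed conjugacy classes with equal trace. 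Because of this, both halves of your argument collapse: the uniqueness of the closed orbit in $\overline{G\cdot\rho}$ cannot be obtained by separating two candidate closed orbits with $\Phi$, and the induced map $\chi(\hat P,G)\to\Phi(\Hom(\Gamma,G))$ is in general not injective, so $\chi(\hat P,G)$ cannot be identified with the semialgebraic image of the polynomial invariants. Your parenthetical concession that the separation is ``the nontrivial input'' and ``part of Luna's theorem'' does not repair this, since the statement you need is not merely nontrivial but untrue.

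The conclusions themselves are correct, but the actual proofs (Luna; later Richardson and Slodowy) proceed differently: one fixes a maximal compact subgroup $K\subset G$ and the associated set $\mathcal{M}$ of minimal vectors (the Kempf--Ness set), shows via the gradient/minimal-vector analysis that every orbit closure meets $\mathcal{M}$ in a single $K$-orbit --- which gives both existence and uniqueness of the closed orbit --- and then realizes the quotient as $\mathcal{M}/K$. Since $K$ is compact, its polynomial invariants \emph{do} separate $K$-orbits, and Tarski--Seidenberg applied to that embedding (not to the $G$-invariants) produces the semialgebraic model; properness of the $K$-action supplies the homeomorphism. So the correct skeleton replaces your $G$-invariant separation step with the compact-group reduction, and that substitution is the substance of the theorem rather than a formality.
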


Note that $\Hom(\Gamma, G)/G$ is not a Hausdorff space in general. By this theorem, it is remarkable that the $\R$-GIT quotient is a natural Hausdorffication from $\Hom(\Gamma, G)/G$.
\[
\Hom(\Gamma,G)/G \rightarrow \Hom(\Gamma, G) \sslash G = \chi(\hat{P}, G)
\]

Denote $\Hom^{\operatorname{ref}}(\Gamma, G)$ be the set of all representations of the Coxeter group as a reflection group in $V=\R^n$, and $\chi^{\operatorname{ref}}(\hat{P},G)$ be the subset of the character variety consisting of generated from the reflection group.

\subsubsection{The relations between deformation spaces and character varieties}
\label{ch:2.2.3}

Let $\hat{P}$ be a Coxeter orbifold and $G=\SL_\pm(n+1,\R)$. It is an interesting problem that the relation between the deformation space and the group homomorphism $\Hom(\pi_1(\hat{P}),G)$ or the character variety $\chi(\hat{P},G)$. $\widetilde{\Def}(\hat{P})$ is the space of projective polytopes with an obvious topology using the topology of projective polytopes with some edges and vertices removed combinatorially equivalent to $\hat{P}$ and the reflections.

Furthermore, we can focus on the subset of the deformation space consisting of semisimple representations. Recall the definition of semisimple representation. Let $V$ be a vector space and $\rho \in \Hom(\Gamma, G)$ be a representation. A subspace $W$ of $V$ is called \textit{invariant} if $\rho(A)w \in W$ for any $w \in W$ and $A \in G$. A representation with no nontrivial invariant subspaces is called \textit{irreducible} (or \textit{simple}). Then, a representation $\rho$ is \textit{semisimple} (or \textit{completely reducible}, or \textit{totally reducible}) if it is isomorphic to a direct sum of a finite number of irreducible representations. In addition, a representation $\rho \in \Hom(\pi_1(\hat{P}),G)$ is \textit{stable} if $\rho$ is semisimple and the centralizer of $\rho$ is finite. To analyze the deformation space $\Def(\hat{P})$, we will define some specific subspace.

\begin{definition} \label{def:Defss}

Let $\hat{P}$ be a Coxeter orbifold. Then, $\widetilde{\Defss}(\hat{P})$ is the subspace of $\widetilde{\Def}(\hat{P})$ of elements corresponding to semisimple representations, and $\Defss(\hat{P})$ is the quotient space of $\widetilde{\Defss}(\hat{P})$ by the same quotient topology.
    
\end{definition}

Similarly, we can define $\Def^{\operatorname{st}}(\hat{P})$ consisting of stable representations. If the given orbifold is compact, Ehresmann and Thurston showed that the given maps are local homeomorphisms. (Refer \cite{CLM18}, Theorem 3.2.)

\begin{theorem}[Ehresmann-Thurston principle]\label{Ehresmann-Thurston principle}

Suppose that $\hat{P}$ is compact. Then the following maps are local homeomorphisms.
\[
\widetilde{\Def}(\orb) \rightarrow \Hom(\pi_1(\hat{P}),G) \quad \text{and} \quad \Def^{\operatorname{st}}(\hat{P}) \rightarrow \Hom^{\operatorname{st}}(\pi_1(\hat{P}),G)/G
\]
where $\Hom^{\operatorname{st}}(\pi_1(\hat{P}),G)$ is the subset of $\Hom(\pi_1(\hat{P}),G)$ consisting of stable representations. 
\end{theorem}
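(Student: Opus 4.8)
The plan is to reduce to the classical (manifold) Ehresmann--Thurston holonomy theorem by passing to a finite manifold cover, and then to run the standard developing-map deformation argument, carrying the deck group and --- for the second map --- the stability hypothesis through it. Since $\hat P$ is a good orbifold (Coxeter orbifolds are developable, and $\pi_1(\hat P)=\Gamma$ acts on the orbifold universal cover $\widetilde{\hat{P}}$), Selberg's lemma gives a torsion-free finite-index normal subgroup $\Gamma'\trianglelefteq\Gamma$, so $M:=\widetilde{\hat{P}}/\Gamma'$ is a closed manifold carrying a free action of the finite group $Q:=\Gamma/\Gamma'$ with $M/Q=\hat P$. A $(G,X)$-structure on $\hat P$ is the same datum as a $Q$-invariant $(G,X)$-structure on $M$, i.e.\ a pair $(\dev,\hol)$ with $\dev\colon\widetilde M\to X$ a developing immersion and $\hol\colon\Gamma\to G$ a $\dev$-equivariant homomorphism restricting to the holonomy of the structure on $M$; conversely such a pair descends. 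Hence $\widetilde{\Def}(\hat P)$ is the $Q$-fixed locus of $\widetilde{\Def}(M)$, the first map of the theorem is the restriction to $Q$-fixed loci of the holonomy map for the closed manifold $M$, and $Q$-equivariance is preserved because every construction below can be carried out with a $Q$-invariant good cover and a $Q$-invariant partition of unity.

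For \emph{openness}, fix a structure $\mu_0$ with holonomy $\rho_0$ and let $\rho\in\Hom(\Gamma,G)$ be close to $\rho_0$. Cover the compact $M$ by finitely many charts with connected overlaps; the developing immersion is built from chart maps glued by transition elements of $G$ subject to the cocycle relations on overlaps and the equivariance relations prescribed by the holonomy. Because the cover is finite, this is a finite system of \emph{open} constraints (being a local $(G,X)$-diffeomorphism, matching on overlaps, $\rho$-equivariance); for $\rho$ sufficiently near $\rho_0$ one solves it by perturbing the transition data, obtaining a developing immersion $\dev_\rho$ with holonomy $\rho$, close to $\dev_{\rho_0}$ in the compact--open topology and depending continuously on $\rho$. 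This gives a continuous local section of the holonomy map, hence openness; the same section, composed with the quotient, handles the second map.

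For \emph{local injectivity}, if $\mu_1,\mu_2$ are structures near $\mu_0$ with the \emph{same} holonomy $\rho$, their developing immersions $\dev_1,\dev_2\colon\widetilde M\to X$ are $\rho$-equivariant and $C^0$-close, hence homotopic through $\rho$-equivariant developing immersions (patch the chartwise homotopies with a $Q$-invariant partition of unity); the induced isotopy shows $\mu_1=\mu_2$ in $\widetilde{\Def}(\hat P)$, so the first map is injective near $\mu_0$. For the second map one argues modulo conjugation: given $[\rho_1]=[\rho_2]$ in $\Hom^{\operatorname{st}}(\Gamma,G)/G$ near $[\rho_0]$, conjugate to make the two holonomies equal; stability guarantees the conjugating element is unique up to the finite centralizer, so the choice is controlled and the resulting map on $\Def^{\operatorname{st}}(\hat P)$ is well defined and injective near $[\mu_0]$. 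Together with the section from the openness step, this exhibits both maps as local homeomorphisms.

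The main obstacle is the second (quotiented) statement: one must check that passing to the $G$-quotient does not destroy the local homeomorphism property, and this is exactly where stability enters --- finiteness of the centralizer makes the conjugation action locally proper with finite stabilizers near $\rho_0$, so the quotient is locally modeled on a slice and the map stays open and injective, whereas without stability the quotient can fail to be locally Euclidean and injectivity can break down. A secondary technical point is the consistent bookkeeping of $Q$-equivariance through the chart-by-chart construction so that every object actually descends to $\hat P$.
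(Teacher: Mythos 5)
The paper does not actually prove this statement: it is quoted as a known result, with the proof deferred to the literature (Theorem 3.2 of \cite{CLM18}, going back to Ehresmann and Thurston), so there is no in-paper argument to compare yours against. Your sketch reconstructs the classical proof in a reasonable way: reduction to a closed manifold cover via Selberg's lemma (legitimate here, since Coxeter groups are linear via the Tits representation and the orbifold is good), openness by perturbing transition data over a finite chart cover, local injectivity by isotoping $C^0$-close equivariant developing maps, and a slice argument using finiteness of the centralizer for the $G$-quotiented map. Two points are thinner than they should be, though both are standard difficulties rather than errors of conception. First, the identification of $\widetilde{\Def}(\hat P)$ with the $Q$-fixed locus of $\widetilde{\Def}(M)$ is clean at the level of actual structures but delicate once one passes to isotopy classes (a structure isotopic to all its $Q$-translates need not be $Q$-equivariantly isotopic to a $Q$-invariant one); it is safer to work throughout with genuinely $Q$-equivariant structures and $Q$-equivariant isotopies, which is what your constructions actually produce. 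Second, ``patch the chartwise homotopies with a partition of unity'' elides the real content of local injectivity: one needs that two $\rho$-equivariant developing immersions that are close in the $C^1$ (not merely $C^0$) sense differ by precomposition with an equivariant diffeomorphism isotopic to the identity, and compactness is used essentially to control this. Since the present paper only ever invokes the theorem for compact orbifolds as a black box (and then proves Proposition \ref{Main Proposition} separately for the non-compact $D^2(;n_1,n_2,n_3,n_4)\times\R$ by direct means), your sketch is an acceptable substitute for the citation, modulo those two standard gaps.
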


For non-compact case, it is well-known that if $\hat{P}$ is a Coxeter orbifold only vertices are removed but no edges are removed, then the deformation space can be related with the character variety. However, $D^2(;n_1,n_2,n_3,n_4)\times \R$ is not only non-compact but also induced by removing edges. Therefore, we need the following proposition.

\begin{proposition}\label{Main Proposition}
    Let $\hat{P} = D^2(;n_1,n_2,n_3,n_4) \times \R$ and $G=\SL_\pm(4,\R)$. Then 
    \begin{itemize}
        \item $\widetilde{\Def}(\hat{P}) \rightarrow \Hom(\pi_1(\hat{P}),G)$
        \item $\Def(\hat{P}) \rightarrow \Hom(\pi_1(\hat{P}),G)/G$
        \item $\Defss(\hat{P}) \rightarrow \chi(\hat{P},G)=\Hom(\pi_1(\hat{P}),G) \sslash G$
    \end{itemize}
    are injective to an open subset of each codomain.
\end{proposition}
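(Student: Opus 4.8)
The plan is to establish the three maps simultaneously by building the first one carefully and then descending through the quotients. First I would set up the correspondence explicitly: a Coxeter polytope whose induced orbifold is combinatorially equivalent to $\hat P$ is recorded by the tuple $(\alpha, v) = (\alpha_1,\dots,\alpha_f,v_1,\dots,v_f)$, and this determines the projective reflections $R_i = \Id - \alpha_i\otimes v_i$. Since $\pi_1(\hat P)$ is the Coxeter group $\Gamma$ with the $R_i$ as standard generators, sending $R_i \mapsto R_i$ defines a holonomy representation $\rho\in\Hom(\Gamma,G)$; the relations $R_i^2 = \Id$ hold by construction and the braid relations $(R_iR_j)^{n_{ij}} = \Id$ hold exactly because Vinberg's condition (C4) forces $\alpha_i(v_j)\alpha_j(v_i) = 4\cos^2(\pi/n_{ij})$, which pins down the order of $R_iR_j$. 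This gives the map $\widetilde{\Def}(\hat P)\to\Hom(\Gamma,G)$, and continuity is immediate from the chart $(\alpha,v)\in ((\R^4)^*)^f\times(\R^4)^f$ that topologizes $\widetilde{\Def}$. Injectivity on $\widetilde{\Def}$: two tuples giving the same representation have the same $R_i$, hence the same pairs $(\alpha_i,v_i)$ up to the scaling ambiguity $(\alpha_i,v_i)\mapsto(c_i^{-1}\alpha_i,c_iv_i)$ — but that scaling is already quotiented out in $\widetilde{\Def}$, since $\widetilde{\Def}$ is defined via $\SL_\pm$-classes of \emph{polytopes}, and the polytope $\Sphere(K)$ with $K=\bigcap_i\{\alpha_i\ge 0\}$ is unchanged under positive rescaling of each $\alpha_i$; one must check the combinatorial type is recovered from the $R_i$, which it is because the face lattice of $K$ is determined by which subfamilies of $\{\alpha_i\}$ cut out nonempty faces.

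Next I would establish openness of the image of $\widetilde{\Def}(\hat P)\to\Hom(\Gamma,G)$. A representation $\rho$ near a holonomy representation $\rho_0$ sends each generator $R_i$ to an order-two element $\rho(R_i)$ fixing a hyperplane, hence of the form $\Id - \alpha_i\otimes v_i$ with $\alpha_i(v_i) = 2$; this recovers a tuple $(\alpha,v)$ depending continuously (even real-analytically) on $\rho$, well-defined up to the scaling action, so I get a continuous local section $\Hom(\Gamma,G)\supset U \to \widetilde{\Def}$ near $\rho_0$ once I know the tuple satisfies Vinberg's conditions (C1)–(C5) and yields the right combinatorial type. Conditions (C1)–(C4) are closed conditions that hold at $\rho_0$; (C4) in particular is forced by the relation $(R_iR_j)^{n_{ij}} = \Id$ together with $n_{ij}<\infty$ — this is where I crucially use that all $n_j\ge 3$ are finite, so every edge order appearing is finite and rigidly determined. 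The delicate point is (C5), nonemptiness of $K^\circ$, and the preservation of the combinatorial type (including which edges are "removed"): this is an open condition on $(\alpha,v)$, and here I would invoke the structure theory recalled in Section~\ref{ch:2.1.4}, Propositions~\ref{Vin71 Prop13} and~\ref{Vin71 Prop14}, to argue that for $\rho$ close enough to $\rho_0$ the cone stays $f$-sided with the same face incidences. This openness is precisely the non-compact analogue of the Ehresmann–Thurston principle, and for $\hat P = D^2(;n_1,n_2,n_3,n_4)\times\R$ it works because the "extra" structure at infinity — the removed edges coming from the $\R$-direction — is governed by the same reflection data and imposes no new closed constraints that could fail to be open.

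Then the remaining two maps follow formally by passing to quotients. The map $\widetilde{\Def}(\hat P)\to\Hom(\Gamma,G)$ is equivariant for the $\SL_\pm(4,\R)$-action (conjugation on both sides), so it descends to $\Def(\hat P) = \widetilde{\Def}(\hat P)/\SL_\pm(4,\R)\to\Hom(\Gamma,G)/G$; injectivity is inherited because two polytopes with conjugate holonomy are related by the conjugating projective transformation, hence lie in the same $\Def$-class, and the image is open because quotient maps by continuous group actions are open. Finally, restricting to $\widetilde{\Defss}(\hat P)$, every semisimple $\rho$ has closed $G$-orbit (Lemma~\ref{CLM18 Lemma2.1}), so the composite $\Defss(\hat P)\to\Hom(\Gamma,G)/G\to\chi(\hat P,G)$ is just the restriction of the previous map followed by the (injective on closed orbits) map to the GIT quotient; injectivity and openness onto the image transfer, using that on the semisimple locus the Hausdorffification $\Hom(\Gamma,G)/G\to\chi(\hat P,G)$ is a homeomorphism onto the image of closed orbits by Theorem~\ref{Por23 Thm2.1}.

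The main obstacle is the openness at the third step — verifying that nearby representations actually come from genuine Coxeter polytopes with the same combinatorial type, i.e.\ that condition (C5) and the prescribed face incidences are open conditions in this non-compact setting. In the compact orderable case this is Choi–Lee's argument, but here the presence of removed edges (the infinite-order ridges created by the $\times\R$ factor) means I cannot simply quote a compactness-based result; instead I expect to argue directly, using Vinberg's Proposition~\ref{Vin71 Prop13} to control when the perturbed cone degenerates, together with the explicit product structure of $\hat P$, to show no degeneration occurs for small perturbations. A secondary subtlety is checking that the local section $\rho\mapsto(\alpha,v)$ is genuinely continuous across the scaling ambiguity — this is handled by noting $R_i = \Id - \alpha_i\otimes v_i$ determines $\alpha_i\otimes v_i$ uniquely, and the $\R_+^f$-bundle $\widetilde{\Def}\to\{$polytopes$\}$ is locally trivial.
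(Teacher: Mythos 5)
Your proposal follows the same overall architecture as the paper's proof: identify a Coxeter polytope with its tuple $(\alpha,v)$, send it to the representation generated by $R_i=\Id-\alpha_i\otimes v_i$, deduce injectivity from the fact that the reflections recover the cone and its face data, descend through the two quotients, and handle the third map via the correspondence between semisimple representations and closed orbits (Lemma \ref{CLM18 Lemma2.1} together with Theorem \ref{Ric88 Thm3.6}). The one place where you genuinely diverge is the openness step, and there your argument is logically the stronger one. The paper argues that starting from $\rho\in\Hom^{\operatorname{ref}}(\pi_1(\hat P),G)$ one can perturb $(\alpha,v)$ slightly and stay inside $\Hom^{\operatorname{ref}}$; that shows the image has no isolated points but is not, as written, a proof that the image contains a neighborhood of $\rho$ in $\Hom(\pi_1(\hat P),G)$. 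You instead take an arbitrary representation $\rho'$ near $\rho_0$, observe that each $\rho'(R_i)$ is an involution near a reflection and hence itself of the form $\Id-\alpha_i'\otimes v_i'$, and then check that Vinberg's conditions and the combinatorial type persist, invoking Propositions \ref{Vin71 Prop13} and \ref{Vin71 Prop14} for (C5) --- the same tools the paper deploys in Chapter \ref{ch:3} but not in this proof. That is the correct direction for openness, and your explicit flagging of (C5) and the face incidences as the delicate point is a real improvement over the paper's one-line treatment.

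One caveat applies equally to your proof and to the paper's: at a polytope where $T_{13}=\alpha_1(v_3)\alpha_3(v_1)=4$ (the boundary stratum of $\Defss(\hat P)\cong\R^3\times[4,\infty)^2$), a nearby representation of $\Gamma$ can have $\alpha_1'(v_3')\alpha_3'(v_1')$ slightly below $4$ and not of the form $4\cos^2(\pi/n)$, since $\Gamma$ imposes no relation on $R_1R_3$; such a representation violates (C4) and lies outside the image, so openness of the image can only hold away from this boundary. Your sentence asserting that (C4) is ``rigidly determined'' by finiteness of the edge orders silently skips the two infinite-order pairs where exactly this problem lives; if you pursue this write-up, you should either restrict the openness claim to the locus $T_{13},T_{24}>4$ or address the boundary case explicitly.
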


\begin{proof}
\[
\begin{tikzcd}
\widetilde{\Def} (\hat{P}) \arrow[rr] \arrow[d, "p'"] \arrow[dd, "r'"', bend right=49] &  & {\Hom(\pi(\hat{P}),G)} \arrow[d, "p"] \arrow[dd, "r", bend left=72]   \\
\Def(\hat{P}) \arrow[rr] \arrow[d, "q'"]              &  & {\Hom(\pi(\hat{P}),G)/G} \arrow[d, "q"] \\
\Defss(\hat{P}) \arrow[r]                        & {\chi(\hat{P},G)} \arrow[r, equal] & {\Hom(\pi_1(\hat{P}),G) \sslash G}                       
\end{tikzcd}
\]

We want to check the openness of those maps. Note that $\pi_1(\hat{P})$ is the Coxeter group. By the definition of the $\Hom^{\operatorname{ref}}(\pi_1(\hat{P}), G)$, there is $\rho \in \Hom^{\operatorname{ref}}(\pi_1(\hat{P}), G)$ which is generated by the projective reflections of the Coxeter polytope. Take a neighborhood of $\rho$ in $\Hom(\pi_1(\hat{P}), G)$. Note that $\rho$ can be one-to-one corresponded to the tuple $(\alpha,v)$ in $(V^* \times V)^4$ where $V=\R^4$. Then, we can choose another representation $\rho' \in \Hom^{\operatorname{ref}}(\pi_1(\hat{P}), G)$ by changing $(\alpha,v)$ with a sufficiently small difference. This implies that the three maps of the proposition are open maps.

Let $p : \Hom(\pi_1(\hat{P}),G) \rightarrow \Hom(\pi_1(\hat{P}),G)/G$ be a natural quotient map by $G$, and let $q$ be a quotient map such that the composition $r=q\circ p$ becomes the $\R$-GIT quotient map of $\Hom(\pi_1(\hat{P}),G)$. Let $p'$ be the quotient map defined in the definition of $\widetilde{\Def}(\hat{P})$. Since $p$ and $p'$ are the quotient maps generated by the same group $G$, $p$ and $p'$ commute with the first and the second maps of the proposition. In other words, the upper part of the diagram commutes. Similarly, $r$ and $r'$ commute with the first and the third maps of the proposition because we defined $\Defss(\hat{P})$ to hold this property.

Now, we want to check the injectivity of those maps. For the first map, note that the difference of some side of the convex polyhedral cone $K$ induces the different $\alpha_i$'s for some $i\in I$. In addition, the difference of some reflection in sides of the convex polyhedral cone $\gamma (K)$ for some $\gamma \in \Gamma = \pi_1(\hat{P})$ induces the different $v_i$'s for some $i\in I$. This implies that the first map is injective. For the second map, we can do a similar proof replacing the convex polyhedral cone with the combinatorial polytope. This implies that the second map is injective. For the third map, refer to the following theorem.

\begin{theorem}[Theorem 3.6, \cite{Ric88}]\label{Ric88 Thm3.6}

Let $G$ be a linearly reductive group, and $x\in G$. Then, $x$ is semisimple if and only if the orbit $G \cdot x$ is closed.
    
\end{theorem}

Note that the character $[\rho] \in \chi(\hat{P},G)$ is a class of a closed orbit of $G$. By Lemma \ref{CLM18 Lemma2.1}, the representation $\rho \in \Hom(\Gamma,G)$ that induces the character $[\rho]$ is irreducible, which is equivalent to $\rho$ being semisimple by Theorem \ref{Ric88 Thm3.6}. Therefore, we can conclude that there is a unique representation $\rho$ which is semisimple for each character $[\rho]$. This proves the injectivity of the third map.
\end{proof}

In Chapter \ref{ch:3}, we need to determine whether the given representation is semisimple or not. To address this, we will use the following lemma.

\begin{lemma}[Lemma 3.26, \cite{DGKLM23}]\label{DGKLM Lemma3.26}

A representation $\rho \in \Hom(\pi_1(\hat{P}),G)$ is semisimple if and only if $V = V_\alpha \oplus V_v$, where $V_\alpha$ is the intersection of the kernels of the $\alpha_j$'s, and $V_v = \operatorname{span}{v_i}$.

\end{lemma}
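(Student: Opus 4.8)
The plan is to prove both implications by exploiting three structural facts about a reflection-group representation. First, $V_v=\operatorname{span}\{v_i\}$ is $\Gamma$-invariant, since $\rho(R_j)v_i=v_i-\alpha_j(v_i)v_j\in V_v$, and $\Gamma$ acts trivially on $V/V_v$: writing $\gamma$ as a word in the $R_i$ and telescoping, $\rho(\gamma)w-w$ is a sum of terms $\rho(\,\cdot\,)\bigl(-\alpha_i(\,\cdot\,)v_i\bigr)\in V_v$. Second, $V_\alpha=\bigcap_i\ker\alpha_i$ equals the fixed subspace $V^\Gamma$: indeed $\rho(R_i)w=w$ for all $i$ iff $\alpha_i(w)v_i=0$ for all $i$, iff $\alpha_i(w)=0$ for all $i$, using that $v_i\neq 0$ (it is a $(-1)$-eigenvector of $\rho(R_i)$); so $V_\alpha$ is $\Gamma$-invariant with trivial action. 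Third, the contragredient $\rho^{*}$ on $V^{*}$ is again a reflection-group representation of $\Gamma$, with $\rho^{*}(R_i)=\Id-v_i\otimes\alpha_i$, whose ``$V_\alpha$'' is $\Ann(V_v)$ and whose ``$V_v$'' is $\operatorname{span}\{\alpha_i\}=\Ann(V_\alpha)$. Hence the condition $V=V_\alpha\oplus V_v$ is equivalent to $V^{*}=\Ann(V_v)\oplus\operatorname{span}\{\alpha_i\}$, and both this condition and semisimplicity are preserved under $\rho\mapsto\rho^{*}$.

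For the direction ``semisimple $\Rightarrow$ $V=V_\alpha\oplus V_v$'': since $\rho$ is semisimple, the invariant subspace $V_v$ has an invariant complement $U$, and $U\cong V/V_v$ is trivial, so $U\subseteq V^\Gamma=V_\alpha$. Thus $V=V_v+V_\alpha$ and $\dim V_v+\dim V_\alpha\geq\dim V$. Applying the same argument to the semisimple representation $\rho^{*}$ gives $\dim\Ann(V_v)+\dim\operatorname{span}\{\alpha_i\}\geq\dim V$, i.e. $\dim V_v+\dim V_\alpha\leq\dim V$. Hence $\dim V_v+\dim V_\alpha=\dim V$, which together with $V=V_v+V_\alpha$ forces $V_v\cap V_\alpha=0$.

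For the converse, suppose $V=V_\alpha\oplus V_v$. Since $\Gamma$ acts trivially on the invariant summand $V_\alpha$, it suffices to show $W:=V_v$ is a semisimple $\Gamma$-module; note $W^\Gamma=V_\alpha\cap V_v=0$ and $\operatorname{span}\{v_i\}=W$. I would argue by induction on $\dim W$. If $W$ is not semisimple it is not irreducible, so pick a minimal nonzero $\Gamma$-invariant $Z\subsetneq W$; it is irreducible, and nontrivial (a trivial $Z$ would lie in $W^\Gamma=0$). As each $\rho(R_i)$ is an involution with $(-1)$-eigenline $\langle v_i\rangle$ and $(+1)$-eigenspace $\ker\alpha_i$, invariance of $Z$ forces, for each $i$, either $v_i\in Z$ or $Z\subseteq\ker\alpha_i$; set $A=\{i:v_i\in Z\}$ and $B=\{i:Z\subseteq\ker\alpha_i\}$, so $A\cup B$ is all indices, $A\neq\varnothing$ (else $Z\subseteq\bigcap_i\ker\alpha_i=0$) and $B\neq\varnothing$ (else $Z\supseteq\operatorname{span}\{v_i\}=W$). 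Put $Z':=\operatorname{span}\{v_i:i\in B\}$. The key point is that $Z'$ is $\Gamma$-invariant: for $i\in B$, $j\notin B$ we have $v_j\in Z\subseteq\ker\alpha_i$, so $\alpha_i(v_j)=0$, hence $\alpha_j(v_i)=0$ by condition (C3), so $\rho(R_j)v_i=v_i\in Z'$; for $j\in B$ it is clear. Then $Z+Z'\supseteq\operatorname{span}\{v_i\}=W$, and in fact $Z\cap Z'=0$: otherwise $Z\subseteq Z'$ by irreducibility, so for $i_0\in A$ we get $v_{i_0}\in Z'$, and then either $i_0\in B$ (so $v_{i_0}\in Z\subseteq\ker\alpha_{i_0}$) or $i_0\notin B$ (so, again by (C3), $\alpha_{i_0}$ vanishes on every $v_j$, $j\in B$, hence on $Z'\ni v_{i_0}$); either way $\alpha_{i_0}(v_{i_0})=0$, contradicting $\alpha_{i_0}(v_{i_0})=2$. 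Thus $W=Z\oplus Z'$ is an invariant decomposition into proper nonzero subspaces, each of which again satisfies $(\,\cdot\,)^\Gamma=0$ and is spanned by the corresponding $v_i$ (one checks this once more from (C3) and $\alpha_i\neq 0$); the induction hypothesis then makes $Z$ and $Z'$ semisimple, hence $W$ semisimple, a contradiction.

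The step I expect to require the most care is the inductive step of the converse: confirming that the sum $Z+Z'$ is direct, and that the restricted data $\{(\alpha_i|_Z,v_i):i\in A\}$ and $\{(\alpha_i|_{Z'},v_i):i\in B\}$ still satisfy the hypotheses needed to re-apply the lemma — principally that $(\,\cdot\,)^\Gamma$ vanishes on each piece and that the relevant $v_i$ still span it. Both reduce to repeated use of the symmetry condition (C3) together with $\alpha_i(v_i)=2\neq 0$; the rest is linear algebra organized around the identifications $V_\alpha=V^\Gamma$, the triviality of $V/V_v$, and the self-duality under $\rho\mapsto\rho^{*}$. (Conditions (C4) and (C5) are not needed for this lemma.)
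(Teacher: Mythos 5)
The paper does not prove this statement; it imports it verbatim as Lemma~3.26 of \cite{DGKLM23}, so there is no in-paper argument to compare against. Your proof is, as far as I can check, correct and self-contained, and it only uses (C1) (so that $\alpha_i(v_i)=2\neq 0$ and $\langle v_i\rangle$ is the $(-1)$-eigenline of $\rho(R_i)$) and the symmetry condition (C3); the three structural facts ($V_v$ invariant with trivial quotient, $V_\alpha=V^\Gamma$, and the self-duality $\rho\mapsto\rho^*$ exchanging the roles of $V_\alpha$ and $V_v$ via $\Ann$) are all verified correctly, the dimension count in the forward direction is clean, and the dichotomy ``$v_i\in Z$ or $Z\subseteq\ker\alpha_i$'' for an invariant subspace of an involution is sound. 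Three small points to tighten. First, the lemma as stated only makes sense for reflection representations $\rho=(\alpha,v)\in\Hom^{\operatorname{ref}}$, which is the reading you adopt; you should say so. Second, in the step ``$A\neq\varnothing$ else $Z\subseteq\bigcap_i\ker\alpha_i=0$'': that intersection is $V_\alpha$, which need not vanish in $V$; what you actually use is $Z\subseteq W$ and $W\cap V_\alpha=W^\Gamma=0$. Third, the induction must be run on the statement parametrized by a subset $I'$ of the generating indices (``$U$ invariant, $U=\operatorname{span}\{v_i:i\in I'\}$ with $I'=\{i:v_i\in U\}$, and $U\cap V_\alpha=0$ implies $U$ semisimple''), since after one step the pieces $Z$ and $Z'$ see only the generators indexed by $A$ and $B$ respectively (the remaining generators act trivially on them, because $A\cap B=\varnothing$ and (C3) forces $\alpha_j|_{Z'}=0$ for $j\in A$); you flag exactly this, and the bookkeeping does go through, with $Z=\operatorname{span}\{v_i:i\in A\}$ following from $W=Z\oplus Z'$ and $\operatorname{span}\{v_i:i\in A\}\subseteq Z$. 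With those clarifications the argument stands as an elementary replacement for the citation.
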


\subsection{Previous works}
\label{ch:2.3}

\subsubsection{Deformation spaces of Coxeter 2-orbifolds}
\label{ch:2.3.1}

Goldman studied the deformation space of convex projective orientable surfaces with negative Euler characteristic \cite{Gol90}. By cutting the surface into pairs of pants and cutting the pair of pants into two open triangles, each triangle has a union of four triangles, including itself, which is the projective invariant. By gluing these triangles with geodesics, the deformation space of the surface is homeomorphic to $\R^{-8\chi(S)}$.

\begin{theorem}[Theorem 1, \cite{Gol90}]\label{Gol90 Thm1}

Let $S$ be a compact projective surface with $\chi(S)<0$. Then, $Def(S)$ is a Hausdorff real analytic manifold of dimension $-8\chi(S)$.
    
\end{theorem}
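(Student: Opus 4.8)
The plan is to follow Goldman's original pants-decomposition strategy and build $\Def(S)$ from the deformation spaces of the pieces. First I would fix a pants decomposition of $S$: choose $-3\chi(S)/2$ disjoint essential simple closed curves $c_1,\dots,c_{-3\chi(S)/2}$ cutting $S$ into $-\chi(S)$ pairs of pants $P_1,\dots,P_{-\chi(S)}$. The first step is to show that every convex $\RP^2$-structure on $S$ has geodesic representatives of the $c_k$ and that along each such curve the holonomy is a \emph{positive hyperbolic} element of $\PGL(3,\R)$, i.e.\ diagonalizable with three distinct positive eigenvalues; this follows from the Hilbert-metric geometry of the convex domain defining the structure, and it lets us cut any convex structure on $S$ into convex $\RP^2$-structures with principal geodesic boundary on the pieces $P_i$. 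Writing $\Def(P)$ for the deformation space of such structures on a single pair of pants $P$, this exhibits $\Def(S)$ as assembled from the $\Def(P_i)$, so the real content is (a) to parametrize $\Def(P)$ and (b) to describe the gluing.

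For (a) I would cut a pair of pants $P$ along a system of disjoint arcs into a bounded number of elementary projective cells (two hexagons, each further subdivisible into convex projective triangles), attaching to each cell a projective ``interior'' invariant and to each arc and each boundary curve a pair of gluing invariants. Carrying this through shows that $\Def(P)$ is a real-analytic cell of dimension $8 = -8\chi(P)$, and that the boundary-holonomy map $\Def(P)\to\mathcal{C}^3$, where $\mathcal{C}\cong\R^2$ is the space of positive hyperbolic conjugacy classes in $\PGL(3,\R)$, is a trivial real-analytic fibre bundle with fibre $\R^2$. Equivalently one may work inside $\Hom(\pi_1 P,\PGL(3,\R))$: convexity of the holonomy is an open condition, $\pi_1 P$ is free of rank two so the relevant conjugacy classes sweep out an $8$-dimensional cell, and the three boundary curves supply $6$ of the parameters.

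For (b), gluing $P_i$ to $P_j$ along $c_k$ forces the two relevant boundary holonomies to be conjugate --- an $\R^2$ worth of matching conditions --- and once they are matched the gluings form a torsor over the centralizer of the holonomy in $\PGL(3,\R)$, the diagonal torus $\cong\R^2$; these are Goldman's twist--bulge flows along $c_k$. The dimension count is then clean: each of the $-\chi(S)$ pants contributes $8$, and each of the $-3\chi(S)/2$ curves trades two independent $\R^2$ boundary parameters for one shared $\R^2$ holonomy parameter plus an $\R^2$ twist--bulge parameter, a net change of zero, so $\dim\Def(S)=8(-\chi(S))=-8\chi(S)$. That $\Def(S)$ is a Hausdorff real-analytic cell then follows: it is built by iterated real-analytic fibrations with cell fibres over cell bases, so it is a real-analytic manifold and contractible, and Hausdorffness follows from the injection of $\Def(S)$ into the character variety near holonomies of convex structures (cf.\ Theorem~\ref{Por23 Thm2.1}). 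The same scheme covers surfaces with nonempty boundary, where a pair of pants meeting $\partial S$ simply contributes its full $8$-cell with no gluing on the free boundary curve.

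The main obstacle is the ``realization'' direction: showing that an arbitrary choice of the $-8\chi(S)$ parameters actually produces a \emph{convex} $\RP^2$-structure and not merely a projective one. One must verify that when the triangle and pants pieces are glued with arbitrary twist--bulge data the developing map remains a diffeomorphism onto a properly convex domain and the holonomy stays discrete and faithful. This requires a convexity-propagation argument --- a locally convex $\RP^2$-structure that restricts to a convex structure with principal geodesic boundary on each pants and is glued along principal geodesics is globally convex --- together with the fact that the convex locus is both open and closed in the relevant component of the representation variety. Securing this, and then upgrading the parametrization to a homeomorphism (injectivity from uniqueness of the geodesic pants decomposition, surjectivity from the realization), is where the substantive work lies; the dimension bookkeeping and the cell structure are comparatively formal once convexity is under control.
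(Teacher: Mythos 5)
Your proposal is correct and follows essentially the same route the paper attributes to Goldman: decompose $S$ into pairs of pants along principal geodesics, parametrize the convex structures on each pair of pants by cutting it further into triangles with projective interior and gluing invariants, and reassemble via the boundary-holonomy matching and twist--bulge parameters, giving the count $8(-\chi(S))$. The paper only sketches this argument in one sentence, and your write-up is a faithful expansion of it, including the correct identification of the realization/convexity-propagation step as the substantive difficulty.
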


Furthermore, Choi and Goldman generalized this theorem to the deformation of 2-orbifolds \cite{CG05}.

\begin{theorem}[Theorem A, \cite{CG05}]\label{CG05 ThmA}

Let $\orb^2$ be a compact 2-orbifold in $\RP^2$-structure with $\chi(\orb^2)<0$ and $\partial \orb^2 = \emptyset$. Then, $Def(\orb^2)$ is homeomorphic to a cell of dimension
\[
-8\chi(|\orb^2|) + (6k - 2k_2) + (3l - l_2)
\]
where $k$ is the number of cone-points, $k_2$ is the number of cone-points of order two, $l$ is the number of corner-reflectors, and $l_2$ is the number of corner-reflectors of order two.
    
\end{theorem}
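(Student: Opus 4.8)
The plan is to adapt Goldman's pants-decomposition argument \cite{Gol90} for convex $\RP^2$ surfaces to the orbifold setting. Since $\orb$ is compact, the Ehresmann--Thurston principle (Theorem~\ref{Ehresmann-Thurston principle}) makes the holonomy map $\Def(\orb)\to\Hom(\pi_1(\orb),\PGL(3,\R))/\PGL(3,\R)$ a local homeomorphism onto an open set; the first step is to argue that $\Def(\orb)$ is actually a union of connected components, namely the characters of the discrete faithful holonomies of \emph{convex} structures (openness and closedness of convexity inside the character variety). The core step is then geometric: fix a maximal system of disjoint, pairwise non-parallel, essential simple closed $1$-suborbifolds and cut along them. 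Since $\chi(\orb)<0$ such a system exists, each singular point of $\orb$ lies in the interior of exactly one complementary piece, and -- because $\partial\orb=\emptyset$ -- each boundary circle of each piece is glued to a boundary circle of another (or the same) piece. The complementary pieces are \emph{elementary} orbifolds with principal geodesic boundary: pairs of pants, and a short list of small orbifolds (disks or annuli with one or two cone-points, and their mirrored analogues, i.e.\ disks with mirror boundary arcs and one or two corner-reflectors).

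The second step is to compute the deformation space of each elementary piece. The base case is Goldman's parametrization of convex structures on surfaces with principal boundary: $\Def$ of a pair of pants is $\R^8$, coordinatized by the projective invariants of its two complementary triangles (the ``union of four triangles'') together with the boundary-holonomy data, which matches $-8\chi$ of a thrice-holed sphere. For a piece carrying a cone-point of order $n$, the loop around it has finite-order holonomy in $\PGL(3,\R)$, and a direct triangulation-invariant computation in Goldman's style shows the ``pie'' germ carries a $6$-dimensional family of projective moduli when $n\ge 3$, shrinking to $4$ when $n=2$. A corner-reflector of order $n$ is a $\Z_2$-quotient (a reflection of the pie) of a cone-point of order $n$, hence carries half as many moduli, $3$ when $n\ge 3$ and $2$ when $n=2$; concretely this is the number of free Cartan entries of its two bounding projective reflections $R_i=\Id-\alpha_i\otimes v_i$ with $\alpha_i(v_j)\alpha_j(v_i)=4\cos^2(\pi/n)$, taken modulo conjugation and the rescalings $(\alpha_i,v_i)\mapsto(c_i^{-1}\alpha_i,c_iv_i)$ and subject to Vinberg's nonemptiness condition (C5). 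The two order-two cases (for both cone-points and corner-reflectors) are genuinely exceptional, must be analyzed by hand, and are exactly what produces the $-2k_2$ and $-l_2$ corrections.

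The third step is to check that the gluings are dimension-neutral. Cutting along a circle $1$-suborbifold produces two principal boundary circles sharing a $2$-dimensional holonomy invariant: re-gluing forces this to match (cost $2$) but reintroduces a $2$-parameter twist--bulge family (gain $2$); cutting along an interval $1$-suborbifold with silvered endpoints is likewise neutral with one parameter on each side. Thus $\dim\Def(\orb)$ equals the sum of the dimensions of the pieces, and an Euler-characteristic bookkeeping -- additivity of $\chi(|\cdot|)$ under gluing along circles, with $-8\chi$ of a pants equal to $8$ -- assembles it into $-8\chi(|\orb|)+(6k-2k_2)+(3l-l_2)$. As an independent check, the same number is the value of a Weil-type computation of $\dim H^1(\pi_1(\orb);\mathfrak{sl}(3,\R))$ at a convex holonomy, where the cone-points and corner-reflectors enter through the dimensions of the conjugacy classes of the corresponding elliptic and reflection generators and $H^0=H^2=0$ by irreducibility.

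The last step is to upgrade ``dimension $d$'' to ``a cell''. The decomposition presents $\Def(\orb)$ as an iterated fibre bundle: at each stage one fibres over the (inductively contractible) deformation space of a sub-orbifold with fibre the product of the deformation space of the newly attached elementary piece and its twist--bulge space, and each such bundle is trivial because its structure group is contractible; hence $\Def(\orb)$ is contractible, and being an open subset of a real-analytic manifold of dimension $d$ it is homeomorphic to $\R^d$. I expect the main obstacles to be, first, the mirrored pieces -- pinning down the moduli counts for corner-reflectors, and especially the order-two exceptions, needs a hands-on analysis of dihedral projective reflection groups together with the convexity condition (C5), not a soft count -- and second, cell-ness and independence of the decomposition, where one genuinely needs Goldman's explicit triangle coordinates and the open/closed convexity argument on the holonomy side rather than merely matching dimensions.
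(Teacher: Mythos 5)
This statement is quoted in the paper as Theorem A of \cite{CG05}; the paper itself gives no proof, so the only meaningful comparison is with Choi--Goldman's original argument. Your proposal is essentially a faithful reconstruction of that argument: the decomposition along a maximal system of essential $1$-suborbifolds into elementary pieces (pairs of pants and small orbifolds carrying the cone-points and corner-reflectors), Goldman's triangle-invariant coordinates on each piece, the dimension-neutral twist--bulge analysis of the gluings, and the holonomy map being an embedding onto a union of components of the character variety are all the actual ingredients of \cite{CG05}, and your per-piece counts ($6$ or $4$ for cone-points, $3$ or $2$ for corner-reflectors) reproduce the correct contributions $6k-2k_2$ and $3l-l_2$. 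The steps you flag as needing hands-on work (the order-two exceptions and the mirrored pieces) are indeed where the original proof does explicit case analysis rather than a soft count, so the outline is sound as written.
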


The deformation spaces of 2-orbifolds are classified by this theorem. We will derive a corollary from this theorem to be used in this paper, which focuses on the Coxeter 2-orbifold induced from a convex Coxeter polygon.

\begin{corollary}[Corollary of Theorem \ref{CG05 ThmA}]\label{Cor of CG05}

Let $P$ be a convex Coxeter polygon with $f$ sides. Assume that $P$ is not a triangle. Let $\hat{P}$ be an induced Coxeter 2-orbifold. Assume that every edge order is finite and at least 3. Then, $\Def(\hat{P}) \cong \R^{3f-8}$.
    
\end{corollary}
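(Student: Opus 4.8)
The plan is to deduce this corollary directly from Theorem \ref{CG05 ThmA} by identifying the orbifold $\hat{P}$ induced from a convex Coxeter $f$-gon and plugging its topological data into the dimension formula. First I would observe that, since $P$ is a convex Coxeter polygon and every edge order $n_{ij}$ is finite, the induced Coxeter orbifold $\hat{P}$ is obtained by silvering all $f$ sides of $P$ and — because all orders are at least $3$ — no vertices need to be removed; in particular $\hat{P}$ is compact with $|\hat{P}|$ a closed disk. Its only singular points are the $f$ corner-reflectors, one at each vertex of the polygon, whose orders are exactly the $f$ edge orders. Thus $\hat{P}$ has $k=0$ cone-points (so $k_2=0$) and $l=f$ corner-reflectors, and since every edge order is at least $3$ we have $l_2 = 0$ as well. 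Here one must be slightly careful: Theorem \ref{CG05 ThmA} as stated assumes $\partial\hat{P}=\emptyset$, but a silvered polygon has empty boundary as an orbifold (the mirror edges are in the interior of the orbifold, not boundary), so the hypothesis is met — I would state this explicitly to avoid confusion.

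Next I would check the remaining hypotheses of Theorem \ref{CG05 ThmA}: namely that $\hat{P}$ carries an $\RP^2$-structure and that $\chi(\hat{P}) < 0$. The first holds because $\hat{P}$ is a convex Coxeter orbifold by construction. For the Euler characteristic, apply the Riemann–Hurwitz formula from Section \ref{ch:2.1.1}: with $|\hat{P}|$ a disk ($\chi = 1$), no cone-points, $f$ corner-reflectors of orders $r_1,\dots,r_f$, and no full $1$-orbifold boundary components,
\[
\chi(\hat{P}) = 1 - \frac{1}{2}\sum_{j=1}^{f}\left(1 - \frac{1}{r_j}\right).
\]
Since $P$ is not a triangle, $f \geq 4$, and since each $r_j \geq 3$ we have $1 - 1/r_j \geq 2/3$, giving $\chi(\hat{P}) \leq 1 - \frac{1}{2}\cdot f \cdot \frac{2}{3} = 1 - f/3 \leq 1 - 4/3 < 0$. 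So the hypothesis $\chi(\hat{P}) < 0$ is verified. (This is exactly where the ``not a triangle'' assumption is used: for $f=3$ with all orders $\geq 3$ one could still have $\chi<0$, but triangles are genuinely special in the deformation theory, so it is cleanest to exclude them as the statement does.)

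Finally, I would substitute into the dimension formula of Theorem \ref{CG05 ThmA}. With $\chi(|\hat{P}|) = \chi(\text{disk}) = 1$, $k = k_2 = 0$, $l = f$, and $l_2 = 0$, the formula gives
\[
-8\chi(|\hat{P}|) + (6k - 2k_2) + (3l - l_2) = -8 + 0 + 3f = 3f - 8,
\]
so $\Def(\hat{P})$ is homeomorphic to a cell of dimension $3f-8$, i.e. $\Def(\hat{P}) \cong \R^{3f-8}$, as claimed. I anticipate no serious obstacle here — this is a bookkeeping argument — the only real subtlety is the boundary-versus-mirror point, making sure we correctly read off $\partial\hat{P} = \emptyset$, $n_{\hat P} = 0$, and that silvered edges contribute nothing beyond the corner-reflector terms already accounted for; everything else is a direct application of the cited theorem.
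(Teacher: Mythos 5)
Your proposal is correct and follows essentially the same route as the paper: identify $\hat{P}$ with $D^2(;m_1,\dots,m_f)$, verify $\chi(\hat{P})<0$ via the Riemann--Hurwitz formula using $f\geq 4$ and all orders $\geq 3$, note $\partial\hat{P}=\emptyset$, and substitute $k=k_2=l_2=0$, $l=f$ into the dimension formula of Theorem \ref{CG05 ThmA} to obtain $3f-8$. Your explicit remarks on the mirror-versus-boundary distinction and on where the ``not a triangle'' hypothesis enters are slightly more careful than the paper's own write-up, but the argument is the same.
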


\begin{proof}

Note that $\hat{P}$ can be identical to $D^2(;m_1,m_2,\cdots,m_f)$, in other words, a disk with $f$ corner-reflectors. We can compute that $\chi(|\hat{P}|)=1$ since it is homeomorphic to a topological disk. Therefore, we can show that
\[
\chi(\hat{P})=1-\frac{1}{2}\sum_{j=1}^{f}\left( 1-\frac{1}{m_j} \right) \leq
1-\frac{1}{2}\sum_{j=1}^{f}\left( 1-\frac{1}{3} \right) = 1-\frac{f}{3} < 0
.\]
Since the boundary of $\hat{P}$ is empty, we can apply the above theorem. Then, $\Def(\hat{P})$ is homeomorphic to a cell of dimension $-8+3f$.
\end{proof}

Therefore, we can conclude that the deformation space of the Coxeter orbifold induced by a convex Coxeter quadrilateral is homeomorphic to $\R^4$. We will use this fact in chapter \ref{ch:4.1}.

\subsubsection{Deformation spaces of Coxeter $n$-simplexes}
\label{ch:2.3.2}

Kac-Vinberg and Koszul found that the deformation space of $n$-simplex is homeomorphic to $\R^{n(n-1)/2}$ in \cite{VK67}. Suhyoung Choi reinterpreted the proof of the theorem with J. R. Kim in \cite{Cho06}. Here, we will reinterpret once again and use the method to prove Theorem \ref{Main Theorem}.

\begin{theorem}[Theorem 3, \cite{Cho06}]\label{Cho06 Thm3}
    Let $\triangle$ be a Coxeter $n$-simplex orbifold whose all edge orders are greater than or equal to 3. Let $G=\SL_\pm(n+1,\R)$. Then, $\Def(\triangle)$ is homeomorphic to $\R^{n(n-1)/2}$.
\end{theorem}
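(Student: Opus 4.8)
The plan is to parametrize $\Def(\triangle)$ directly via the Cartan matrix of the $(n+1)$ projective reflections and use the rigidity coming from a simplex having linearly independent facet functionals. First I would fix the combinatorial data: the $n$-simplex $\triangle$ has $n+1$ facets $S_1,\dots,S_{n+1}$, with $S_i$ and $S_j$ adjacent for all $i\ne j$, and prescribed finite edge orders $n_{ij}\ge 3$. A convex real projective structure on $\triangle$ is given by a tuple $(\alpha_i, v_i)_{i=1}^{n+1}\in (V^*\times V)^{n+1}$, $V=\R^{n+1}$, satisfying Vinberg's conditions (C1)--(C5), modulo the action of $\SL_\pm(n+1,\R)$ and the scalings $c_i\cdot(\alpha_i,v_i)=(c_i^{-1}\alpha_i,c_iv_i)$. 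Since $\triangle$ is a simplex and the structure is convex, the $\alpha_i$ are linearly independent (otherwise $K$ would have fewer than $n+1$ faces, contradicting Proposition \ref{Vin71 Prop13}); hence by acting with $\SL_\pm(n+1,\R)$ I may normalize the $\alpha_i$ to the standard coordinate functionals, i.e.\ $\alpha_i = e_i^*$ up to the residual scaling, so that the dual basis $\{v_i\}$ must also be a basis (by Proposition \ref{Vin71 Prop18}, $\dim\operatorname{span}\{v_i\}=\operatorname{rank}(M)=n+1$, so the representation is automatically semisimple and the structure rigidly determined by $M$). After this normalization the only remaining data is the Cartan matrix $M$ with $(M)_{ij}=\alpha_i(v_j)$, which has $(M)_{ii}=2$, $(M)_{ij}\le 0$ for $i\ne j$, and $(M)_{ij}(M)_{ji}=4\cos^2(\pi/n_{ij})$ by (C4); the diagonal-scaling gauge freedom $M\mapsto \operatorname{diag}(c)^{-1} M \operatorname{diag}(c)$ identifies $M$ with $M'$ iff they agree after rescaling.

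The key step is then a bijective bookkeeping of the free parameters. Each off-diagonal pair $\{(M)_{ij},(M)_{ji}\}$ for $i<j$ is constrained only by the single equation $(M)_{ij}(M)_{ji}=4\cos^2(\pi/n_{ij})$, a positive constant (positive because $n_{ij}\ge 3$), and each factor must be strictly negative; so the set of valid pairs is $\{(x,y)\in \R_{<0}^2 : xy = \text{const}\}\cong \R$, one real parameter per pair. There are $\binom{n+1}{2}=n(n+1)/2$ such pairs, giving an $\R^{n(n+1)/2}$ worth of matrices $M$. The scaling gauge is $(\R_+)^{n+1}$ acting by conjugation, but the diagonal torus of $\SL_\pm$ is already used up — more precisely, the full gauge group acting on $(\alpha,v)$ modulo $\SL_\pm$ is the $(n+1)$-dimensional torus $(\R_+)^{n+1}$, whose action on the $n(n+1)/2$ off-diagonal products has a one-dimensional kernel (the global scaling acts trivially on $M$), so it cuts the dimension down by $(n+1)-1 = n$. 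This yields $n(n+1)/2 - n = n(n-1)/2$, matching the claim. I would make this rigorous by exhibiting an explicit slice: fix $(M)_{i,i+1}=(M)_{i+1,i}=-2\cos(\pi/n_{i,i+1})$ for all $i$ (using the torus to normalize the "chain" entries), after which the remaining $n(n+1)/2 - n = n(n-1)/2$ entries $(M)_{ij}$ with $j>i+1$ (one per pair, the other determined by (C4)) range freely over $\R_{<0}$, parametrized by $\R^{n(n-1)/2}$ via a logarithm.

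The remaining point is to check that every such $M$ actually arises from a convex Coxeter simplex, i.e.\ that (C5) — nonemptiness of $K^\circ$ — holds automatically. Here I would invoke Proposition \ref{Vin71 Prop13}: since the $\alpha_i$ are the standard basis of $V^*$, the space of linear relations $L_\alpha$ among them is $\{0\}$, so $L_\alpha\cap L_r^+(M)=\{0\}$ trivially, and hence $K$ is a genuine $(n+1)$-sided cone with nonempty interior, regardless of the off-diagonal values. Then I would show the parametrization map $\R^{n(n-1)/2}\to \Def(\triangle)$ is a homeomorphism: continuity and continuity of the inverse are clear from the explicit formulas (the Cartan matrix depends continuously on $(\alpha,v)$ and conversely the normalized $(\alpha,v)$ depends continuously on $M$), and injectivity follows since distinct slice-points give non-gauge-equivalent Cartan matrices, which (by the rigidity above, $v_i$ being the dual basis) give non-isomorphic projective simplices.

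The main obstacle I anticipate is not the dimension count but the careful handling of the gauge group: one must verify precisely that the combined action of $\SL_\pm(n+1,\R)$ (used to fix $\alpha_i=e_i^*$) and the residual diagonal scalings leaves exactly an $n$-dimensional effective freedom on the Cartan entries, and that no further identifications occur (e.g.\ permutations of coordinates are \emph{not} allowed because they would permute the facets and hence change the labeling/edge orders, unless the labels happen to be symmetric — but since we fix the combinatorial type with its labels, permutations are excluded). A secondary subtlety is confirming that the strict negativity of each $(M)_{ij}$ (rather than allowing $0$) is forced by $n_{ij}\ge 3 \Rightarrow 4\cos^2(\pi/n_{ij})>1>0$, so the parameter space is genuinely $\R^{n(n-1)/2}$ with no boundary strata — this is exactly where the hypothesis "all edge orders $\ge 3$" is used, in contrast to the $[4,\infty)$ factors that will appear in the main theorem when infinite edge orders are present.
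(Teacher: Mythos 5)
Your proposal follows essentially the same route as the paper: normalize $\alpha_i=e_i^*$ so that $[v]$ is identified with the Cartan matrix, use the residual diagonal torus to fix $n$ entries (you normalize the chain entries $M_{i,i+1}$, while the paper instead fixes the first column of $[v]$ to $[2,-1,\cdots,-1]^{\operatorname{T}}$ and pins down the last scaling via $\det A=\pm1$), and parametrize each remaining off-diagonal pair by its hyperbola branch $\{xy=\mu_{ij},\,x,y<0\}\cong\R$, yielding $\R^{n(n-1)/2}$. The only inaccuracy is the parenthetical claim that $\operatorname{rank}(M)=n+1$ always holds (affine-type Cartan matrices are singular), but nothing in your argument depends on the $v_i$ forming a basis, since with $\alpha_i=e_i^*$ the identity $[v]=M$ determines the $v_j$ outright.
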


\begin{proof}

Refer that $\Hom_P(\pi_1(\triangle),G)/G$ is locally homeomorphic to the deformation space $\Def(\triangle)$ by \cite{Cho06}, Proposition 3. We will compute the deformation space $\Def(\triangle)$ using $\widetilde{\Def}(\triangle)$, which is homeomorphic to some open subset of $\Hom(\pi_1(\triangle),G)$, and this can be corresponded to some subset of $(V^* \times V)^{n+1}$. Here, $V = \R^{n+1}$. If two points are in the same orbit of $G$, then their Coxeter orbifolds are projectively equivalent to each other. This implies that they are at the same point in the deformation space. Therefore, we will compute the stabilizer of $G$, which is homeomorphic to the deformation space.

Denote a tuple 
\[(\alpha,v)=(\alpha_1, \cdots, \alpha_{n+1}, v_1, \cdots, v_{n+1}) \in (V^* \times V)^{n+1}\]
which is in the image of $\widetilde{\Def}(\triangle)$. Let $A \in G$ be an element of group action satisfying
\[\alpha_j \mapsto c_j^{-1} \alpha_j A^{-1} \quad \text{and} \quad v_j \mapsto c_j A v_j,\]
where $c_j \in \R_+$ are parameters to simplify the proof. (Here, $j = 1,\cdots,n+1$ is the index.)

The first claim is that any orbit of $G$ includes the point of $\alpha_j = e^*_j$ for every $j=1,\cdots,n+1$, where $e^*_j$ is the standard basis of the dual vector space $V^*$. It is sufficient to show that there is a tuple $(e^*_j, w)$ such that $c_j^{-1} \alpha_j A^{-1} = e^*_j$ and $c_j A v_j = w_j$. Then, $c_j^{-1} \alpha_j A^{-1} = e^*_j$ implies that $c_j^{-1} \alpha_j = A e^*_j$, hence we can represent $A$ as below.
\[
A=\begin{bmatrix}
    c_1^{-1} \alpha_1 \\ \vdots \\ c_{n+1}^{-1} \alpha_{n+1}
\end{bmatrix} \in \SL_\pm(n+1,\R)
\]
Since such a matrix $A$ exists for some $c_j$, the claim is true.

By the above assumption, the convex polyhedral cone $K$ becomes the first closed orthant, $\bigcap_{j=1}^{n+1}\{x_j \geq 0\}$. This is the fundamental domain of the reflection group $\Gamma$. Note that $\Sphere(K)$ is the Coxeter $n$-simplex.

Since the orbit of $G$ is not trivial in the subset of $\widetilde{\Def}(\triangle)$ with $\alpha_j = e^*_j$, we want to find stronger conditions. Note that
\[
\begin{bmatrix} w_1 & w_2 & \cdots & w_{n+1} \end{bmatrix} = \begin{bmatrix} c_1 A v_1 & c_2 A v_2 & \cdots & c_{n+1} A v_{n+1} \end{bmatrix} = D^{-1} M D
\]
\[ 
= \begin{bmatrix}
     2 & \frac{c_2}{c_1}M_{12} & \frac{c_3}{c_1}M_{13} & \cdots & \frac{c_n}{c_1}M_{1 \, n} & \frac{c_{n+1}}{c_1}M_{1 \, n+1} \\
    \frac{c_1}{c_2}M_{21} & 2 & \frac{c_3}{c_2}M_{23} & \cdots & \frac{c_n}{c_2}M_{2 \, n} & \frac{c_{n+1}}{c_2}M_{2 \, n+1} \\
    \frac{c_1}{c_3}M_{31} & \frac{c_2}{c_3}M_{32} & 2 & \cdots & \frac{c_n}{c_3}M_{3 \, n} & \frac{c_{n+1}}{c_3}M_{3 \, n+1} \\
    \vdots & \vdots & \vdots & \ddots & \vdots & \vdots \\
    \frac{c_1}{c_n}M_{n\,1} & \frac{c_2}{c_n}M_{n\,2} & \frac{c_3}{c_n}M_{n\,3} & \cdots & 2 & \frac{c_{n+1}}{c_n}M_{n\,n+1} \\
    \frac{c_1}{c_{n+1}}M_{n+1\,1} & \frac{c_2}{c_{n+1}}M_{n+1\,2} & \frac{c_3}{c_{n+1}}M_{n+1\,3} & \cdots & \frac{c_n}{c_{n+1}}M_{n+1\,n} & 2
\end{bmatrix}
\]
where $D=\diag(c_1,\cdots,c_{n+1})$ is a diagonal matrix and $M = [\alpha_i(v_j)]$ is the Cartan matrix. Assume that $w_1 = [2,-1,\cdots,-1]^{\operatorname{T}}$. Then, $c_2 = - c_1 M_{21}$, $\cdots$, and $c_{n+1} = -c_1 M_{n+1 \, 1}$. Then, $c_j$ ($j\neq1$) depends only on $c_1$. Consider the determinant of $A$. Then we can get the equation.
\[
\pm 1 = \det(A) = \frac{\det\begin{bmatrix} \alpha_1 & \cdots & \alpha_{n+1} \end{bmatrix}}{c_1 c_2 \cdots c_{n+1}}
 = \frac{\det\begin{bmatrix} \alpha_1 & \cdots & \alpha_{n+1} \end{bmatrix}}{c_1^{n+1}M_{21}\cdots M_{n+1 \, 1}}
\]
Since $M_{21}<0$, $\cdots$, $ M_{n+1 \, 1} < 0$ by Vinberg's theory and $\det\begin{bmatrix} \alpha_1 & \cdots & \alpha_{n+1} \end{bmatrix}$ is a constant number, such $c_1 >0$ is unique. This implies that $A$ is a constant matrix if we give conditions with $\alpha_j = e^*_j$ and $w_1 = [2,-1,\cdots,-1]^{\operatorname{T}} $. In other words, the subset of $(V^* \times V)^{n+1}$ with the above assumptions becomes a $G$-stabilizer. Therefore, the solution space of these conditions is homeomorphic to the deformation space.

Now, we will focus on the subset of $\Hom(\pi_1(\triangle),G)$ with $\alpha_j = e^*_j$ ($j=1,\cdots,n+1$) and $v_1 = [2,-1,\cdots,-1]^T$. More explicitly, we can denote a tuple $(\alpha,v)$ as below.
\[
[\alpha] = \begin{bmatrix} \alpha_1 \\ \vdots \\ \alpha_{n+1} \end{bmatrix} = I_{n+1} \quad \& \quad
[v] = \begin{bmatrix} v_1 & \cdots & v_{n+1} \end{bmatrix} =
\begin{bmatrix}
     2 & v_{12} & v_{13} & \cdots & v_{1 \, n} & v_{1 \, n+1} \\
    -1 & 2 & v_{23} & \cdots &v_{2 \, n} & v_{2 \, n+1} \\
    -1 & v_{32} & 2 & \cdots &v_{3 \, n} & v_{3 \, n+1} \\
    \vdots & \vdots & \vdots & \ddots & \vdots & \vdots \\
    -1 & v_{n+1 \, 2} & v_{n+1 \, 3} & \cdots & 2 & v_{n \, n+1} \\
    -1 & v_{n+1 \, 2} & v_{n+1 \, 3} & \cdots & v_{n+1 \, n} & 2
\end{bmatrix}
\]
Here, $I_{n+1}$ is the identity matrix. Therefore, $[v]$ is identified with the Cartan matrix $M$, in other words, $[v]$ satisfies Vinberg's conditions. This implies that entries of the first row $v_{12}$, $\cdots$, $v_{1 \, n+1}$ are constant by $(-1)v_{1 \, j} = 4\cos^2(\frac{\pi}{n_{1j}})=: \mu_{1j}$. Similarly, each entry of the upper diagonal depends only on the transpose entry satisfying $v_{ij}v_{ji}=\mu_{ij}$. Since we assumed that every order of the Coxeter $n$-simplex orbifold is greater than 2, every $v_{ij}$ is negative. These are all of conditions to find the solution space. The solution space is the product of $v_{ij}<0$ with $i<j$. Therefore, the deformation space is homeomorphic to $\R^{n(n-1)/2}$.
\end{proof}

Moreover, we can allow some edge orders equal to 2 by replacing the above equation by $M_{ij}=0$ and $M_{ji}=0$. This theorem directly induces that the deformation space of the Coxeter 3-simplex with finite orders is homeomorphic to $\R^3$.

\section[Proofs]{Proofs}
\label{ch:3}
\noindent

The motivation of the main theorem comes from the tetrahedron. We already saw that the deformation space of a tetrahedron with finite edge orders has dimension three. Consider a Coxeter 3-simplex with four sides, $S_1$, $S_2$, $S_3$, and $S_4$. Now, assume that there are two non-adjacent edges whose orders are infinite. Without loss of generality, assume that those two infinite edge orders are given on $n_{13}$ and $n_{24}$. The rest of the edge orders $n_{12}$, $n_{23}$, $n_{34}$, and $n_{14}$ are finite orders greater than or equal to 3. Then, the labeled polytope can be shown as the left side of Figure \ref{Figure general position case}. Since the Coxeter orbifold can be obtained by removing edges with infinite orders, the underlying space of the Coxeter orbifold can be shown as the right side of Figure \ref{Figure general position case}. This underlying space is topologically homeomorphic to the product of a disk and an open line. Furthermore, since the orbifold fundamental group of this orbifold is equal to the Coxeter group which is a reflection group with $n_{12}$, $n_{23}$, $n_{34}$, and $n_{14}$. Therefore, the Coxeter orbifold induced from the given labeled polytope can be presented by $D^2(;n_{12},n_{23}, n_{34}, n_{14})$. 

\begin{figure}[h]
    \centering
    \includegraphics[height=5cm]{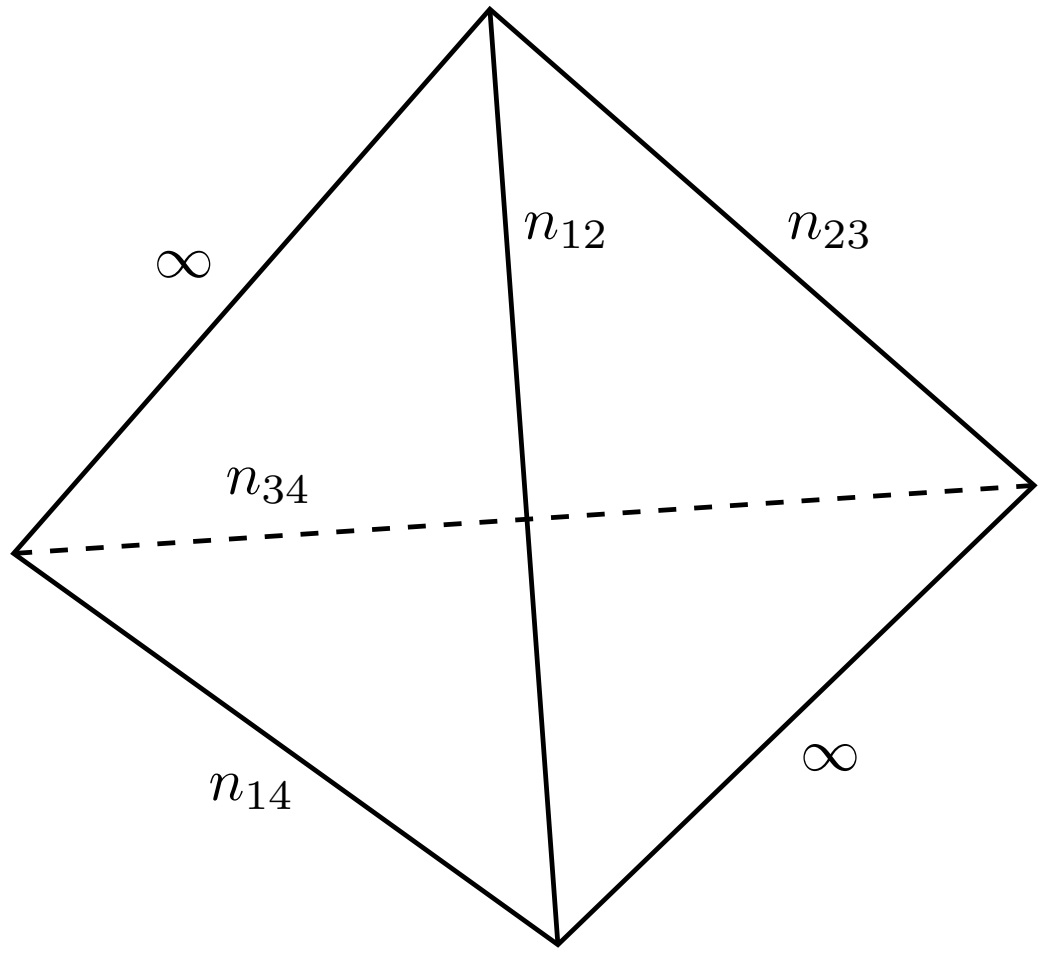}
    \includegraphics[height=5cm]{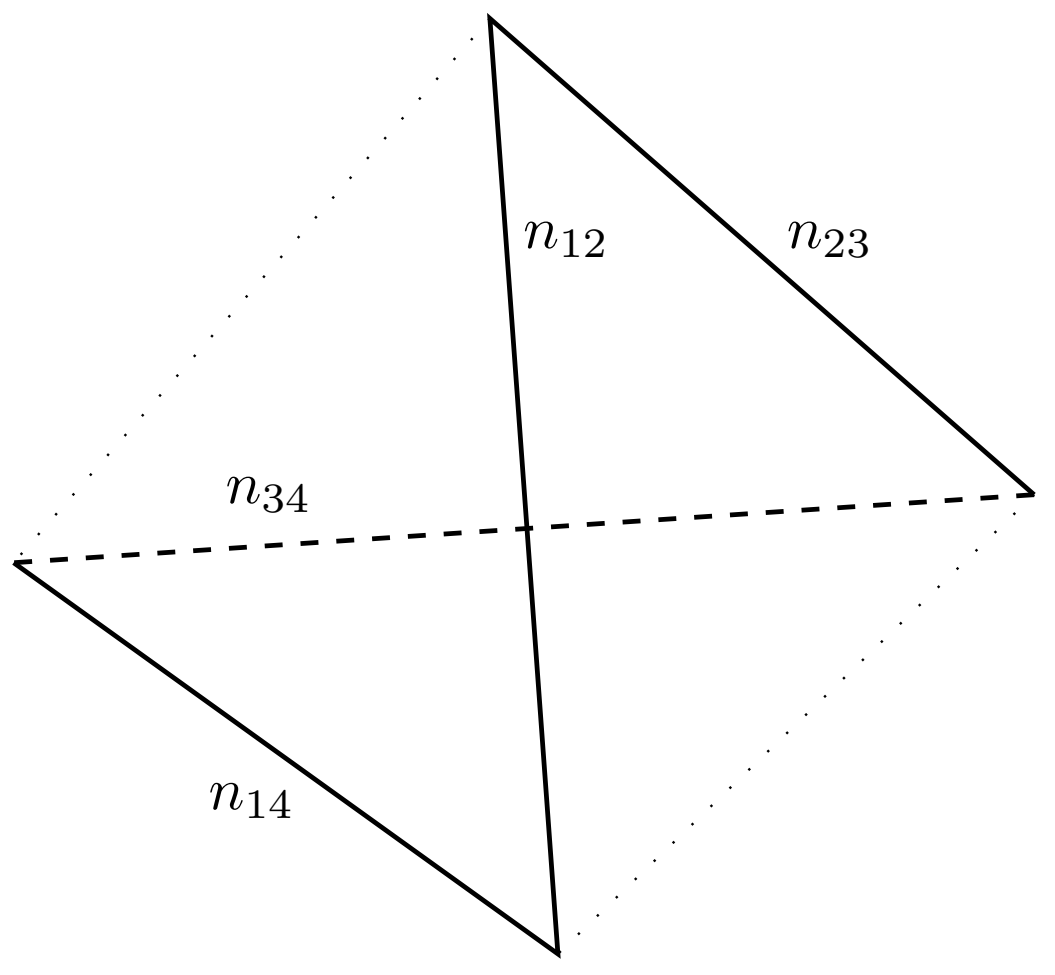}
    \caption{A labeled polytope and a Coxeter orbifold of the general position case}
    \label{Figure general position case}
\end{figure}

However, if we consider the projective reflections $R_i=\Id-\alpha_j\otimes v_j$ of each side $S_i$ from the above orbifold, $\alpha_1$, $\alpha_2$, $\alpha_3$, and $\alpha_4$ should be linearly independent in $V^*$. This is easily checked by the fact that the given Coxeter orbifold comes from the tetrahedron, that is, there is no pair of two sides which are concurrent in $\RP^3$. Because of this fact, we should check if there is a Coxeter orbifold $D^2(;n_{12},n_{23}, n_{34}, n_{14})$ induced from the Coxeter polytope whose linear functionals of reflections are linearly dependent in $V^*$. In fact, we can easily find an example.

Consider a quadrilateral in $\Sphere^3$. Suppose the Coxeter polytope $P$ such that each side of $P$ touches one side of the quadrilateral orthogonally. This Coxeter polytope $P$ has only two vertices $v_1$ and $v_2$, which are the preimage of double covering $q : \Sphere^3 \rightarrow \RP^3$ and $q(v_1)=q(v_2)$ which is the ideal point of $\RP^3$. In other words, $v_1$ is the antipodal point of $v_2$ in $\Sphere^3$. Then, the Coxeter orbifold can be induced by removing those two vertices, shown in the Figure \ref{Figure concurrent case}. For this orbifold, the linear functionals of the reflections are linearly dependent in $V^*$. To divide these two cases, we will call the former the \textit{general position case}, and the latter the \textit{concurrent case}.

\begin{definition} \label{def:general position case and concurrent case}

Let $P$ be a Coxeter polytope, and $R_j=\Id-\alpha_j \otimes v_j$ be its projective reflections. We say that $P$ is in \textit{general position case} if the linear functionals $\alpha_j$ are in general position in projective geometry, in other words, any $d+2$ of them do not lie on a $d$-dimensional projective plane. We say that $P$ is in the \textit{concurrent case} if $\alpha_j$ are linearly dependent in $V^*$.
    
\end{definition}

\begin{definition} \label{defgen and defcon}

Recall the notations of deformation spaces that we introduced in definition \ref{def:deformation space} and \ref{def:Defss}. Let $\Defgen(\hat{P})$ (resp. $\Defssgen(\hat{P})$) be the subspace of $\Def(\hat{P})$ (resp. $\Defss(\hat{P})$) corresponding to the general position case, and let $\Defcon(\hat{P})$ (resp. $\Defsscon(\hat{P})$) be the subspace of $\Def(\hat{P})$ (resp. $\Defss(\hat{P})$) corresponding to the concurrent case.
    
\end{definition}

\begin{figure}[h]
    \centering
    \includegraphics[height=5cm]{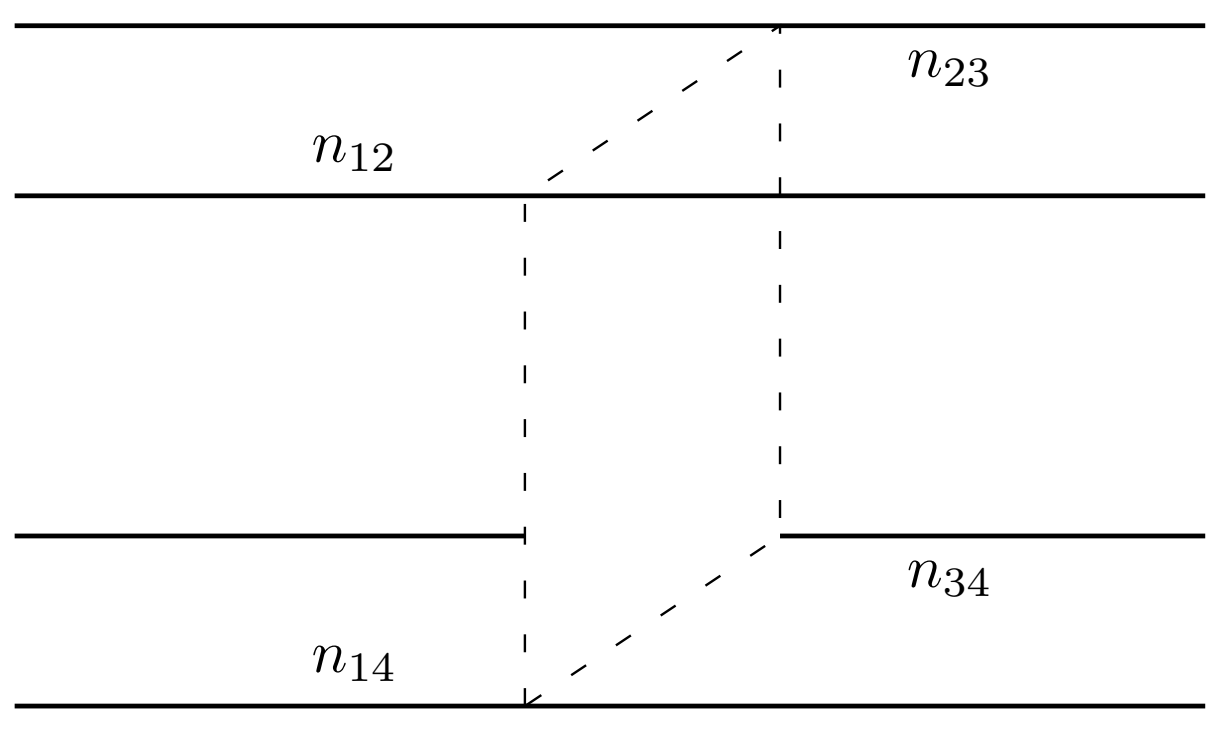}
    \caption{A Coxeter orbifold of the concurrent case}
    \label{Figure concurrent case}
\end{figure}

The proof follows a similar idea to the proof method of Theorem \ref{Cho06 Thm3}. By Proposition \ref{Main Proposition}, we can get relations between some subsets of the deformation space that have bijections to some open subset of the group homomorphism or its quotient spaces. Therefore, if we consider subsets of $\widetilde{\Def}(\hat{P})$, then it maps to some subsets of $\Hom(\pi_1(\hat{P}),G)$. We will find nicely parametrized subspaces, called $S \subset \widetilde{\Def}(\hat{P})$, so that their images can be related to $\Hom(\pi_1(\hat{P}),G)/G$ or $\chi (\hat{P},G)$ and each includes the image of $\Def(\hat{P})$ or $\Defss(\hat{P})$, respectively. For example, if the image of $S$ becomes a $G$-stabilizer, then $S$ is homeomorphic to $\Def(\hat{P})$. Even if not, we can also find a homeomorphism by computing the orbit of $G$ in the image of $S$.

We will prove the theorem in the following order. 
\begin{itemize}
    \item [1.] Find nice subsets of $\widetilde{\Defss}(\hat{P})$ (resp. $\widetilde{\Def}(\hat{P})$) that minimizes the dimension of the orbit of $G$ in $\Defss(\hat{P})$ (resp. $\Def(\hat{P})$) by imposing conditions on $\alpha$'s and $v$'s.
    \begin{itemize}
        \item[1-1.] To compute $\Defsscon(\hat{P})$, find the subset of $\widetilde{\Defss}(\hat{P})$ where $\alpha_j = e^*_j$, for $j=1,2,3$ and $\alpha_4 = e^*_1 - e^*_2 + e^*_3$, as described in chapter \ref{ch:3.1}.
    \item[1-2.] To compute $\Defss(\hat{P})$, find the subset of $\widetilde{\Defss}(\hat{P})$ using $S$ where $\alpha_j = e^*_j$ and $e^*_4(v_j)=0$ for $j=1,2,3$, and $e^*_1(v_4)=-1$, as described in chapter \ref{ch:3.2}.
    \item [1-3.] To compute $\Defgen(\hat{P})$, find the subset of $\widetilde{\Def}(\hat{P})$ where $\alpha_j=e^*_j$ for $j=1,2,3,4$, as described in chapter \ref{ch:3.3}.
    \end{itemize}
    
    \item [2.] Identify additional conditions on $\alpha$'s and $v$'s within the given subset and determine the corresponding solution space.
    \item [3.] Parametrize the given solution space.
\end{itemize}

\subsection{Proof of the main theorem (2)}
\label{ch:3.1}

Similar in chapter \ref{ch:2.3.2}, denote 
\[(\alpha,v) = (\alpha_1, \alpha_2, \alpha_3, \alpha_4, v_1, v_2, v_3, v_4) \in (V^* \times V)^4,\]
where $V=\R^4$. Let $A$ be an element of $G=\SL_\pm(4,\R)$ satisfying
\[ \alpha_j \mapsto c_j^{-1} \alpha_j A^{-1} \quad \text{and} \quad v_j \mapsto c_j A v_j, \]
where $c_j \in \R_+$ are parameters. Let 
\[(\beta,w) = (\beta_1, \beta_2, \beta_3, \beta_4, w_1, w_2, w_3, w_4)\]
be a tuple with group action $A \in G$ with
\[c_j^{-1}\alpha_{j}A^{-1}=\beta_j \quad \text{and} \quad c_j A v_j = w_j.\]
First, assume that $\beta_j = e_j^*$, $\forall j=1,2,3$. Then, $j$-th row of $A$ is identical to $c_j^{-1} \alpha_j$, except for $j=4$. Since $A \in \SL_\pm(4,\R)$, 4-th row is independent from the other rows, denote the 4-th row of $A$ be $c_4^{-1} \alpha_0$. Then, $\alpha_0$ is linearly independent with $\alpha_1$, $\alpha_2$, and $\alpha_3$ in $V^*$.

\[
A=\begin{bmatrix}
    c_1^{-1} \alpha_1 \\ c_2^{-1} \alpha_2 \\ c_3^{-1} \alpha_3 \\ c_{4}^{-1} \alpha_{0}
\end{bmatrix} \in \SL_\pm(4,\R)
\]
Then, compute $w_1, w_2, w_3$ and $w_4$.

\[
\begin{bmatrix} w_1 & w_2 & w_3 & w_4 \end{bmatrix} = \begin{bmatrix} c_1 A v_1 & c_2 A v_2 & c_3 A v_3 & c_4 A v_4 \end{bmatrix}
= D^{-1} \begin{bmatrix}
     2 & M_{12} & M_{13} & M_{14} \\
    M_{21} & 2 & M_{23} & M_{24} \\
    M_{31} & M_{32} & 2 & M_{34} \\
    \alpha_0(v_1) & \alpha_0(v_2) & \alpha_0(v_3) & \alpha_0(v_4)
\end{bmatrix} D
\]
It is remarkable that the 4-th row of $A$ depends on $\alpha_0$ and $c_4$. Here is the claim that we can restrict a subset with (4,1), (4,2), and (4,3) entries of the above matrix vanish. First, we want to show that $v_1$, $v_2$, and $v_3$ must be linearly independent in $V$. Since we consider the subset with $\alpha_j = e^*_j$ for $j=1,2,3$, the submatrix of the Cartan matrix $M_{3 \times 3}$ can be computed.
\[
M_{3 \times 3} = \begin{bmatrix} 2 & M_{12} & M_{13} \\ M_{21} & 2 & M_{23} \\ M_{31} & M_{32} & 2 \end{bmatrix}
\]
Then, consider the determinant of this matrix.
\[
\det(M_{3\times3}) = 2^3 + M_{21} M_{32} M_{13} + M_{31}M_{12}M_{23} - 2(M_{12}M_{21} + M_{23}M_{32} + M_{31}M_{13})
\]
Note that $M_{ij}$ are negative and $M_{12}M_{21} \geq 4$, hence $\det(M_{3\times3})<0$. Since $v_1$, $v_2$, and $v_3$ are linearly independent in $V$, then $\dim(\Ann\{v_1,v_2,v_3)\} = 1$. Choose $L \in \Ann\{v_1,v_2,v_3\} $ so that $\alpha_0 = t L$ for any $t\in \R-\{0\}$. In other words, we may assume that $\alpha_0(v_1)=\alpha_0(v_2)=\alpha_0(v_3)=0$.
\[
\begin{bmatrix} w_1 & w_2 & w_3 & w_4 \end{bmatrix}
= D^{-1} \begin{bmatrix}
     2 & M_{12} & M_{13} & M_{14} \\
    M_{21} & 2 & M_{23} & M_{24} \\
    M_{31} & M_{32} & 2 & M_{34} \\
    0 & 0 & 0 & t L(v_4)
\end{bmatrix} D
\]

For the concurrent case, note that sides of label 2 and 4 are not adjacent. By Proposition \ref{Vin71 Prop14}, $\alpha_4$ is determined by the following equation.
\[
\alpha_4 = -d_2 \alpha_2 + d_1 \alpha_1 + d_3 \alpha_3 \quad \text{ with } \quad d_2>0 \text{ and } d_1, d_3 \geq 0
\]
Moreover, we can induce that $d_1$ and $d_3$ are nonzero. Take a point $x$ in the edge between the side of label 1 and 2. Then, $\alpha_1(x) = \alpha_2(x)=0$ and $\alpha_{3}(x)$ and $\alpha_4(x)$ are nonzero. This implies that $d_3$ is nonzero. Similarly, if we take a point in the edge between the sides of label 2 and 3, we get that $d_1$ is nonzero. In other words, $\alpha_4$ is determined by the above equation with positive constants $d_1$, $d_2$, and $d_3$.
\[
\alpha_4 = -d_2 \alpha_2 + d_1 \alpha_1 + d_3 \alpha_3 \quad \text{ with } \quad d_1, d_2, d_3>0
\]

Now, we want to restrict the section with $\alpha_4 = e^*_1 - e^*_2 + e^*_3$. Assume that $c_4^{-1} \alpha_4 A^{-1} = e^*_1 - e^*_2 + e^*_3$, and let $\alpha_4 = d_1 \alpha_1 - d_2 \alpha_2 + d_3 \alpha_3$. Then, we get the below equation.
\[
c_1^{-1} \alpha_1 - c_2^{-1} \alpha_2 + c_3^{-1} \alpha_3 = (e_1^* - e_2^* + e_3^*)A = c_4^{-1} \alpha_4 = \frac{d_1}{c_4} \alpha_1 - \frac{d_2}{c_4} \alpha_2 + \frac{d_3}{c_4} \alpha_3
\]
Note that $\alpha_1$, $\alpha_2$, and $\alpha_3$ are linearly independent in $V^*$ (by the same reason with the linear independence of $v_1$, $v_2$, and $v_3$).
\[
c_1^{-1}=\frac{d_1}{c_4}, \quad c_2^{-1} = \frac{d_2}{c_4}, \quad c_3^{-1} = \frac{d_3}{c_4} \qquad \Leftrightarrow \qquad c_1 = \frac{c_4}{d_1}, \quad c_2 = \frac{c_4}{d_2}, \quad c_3 = \frac{c_4}{d_3}
\]
Apply to the determinant of $A$.
\[
\pm 1 = \det(A) = \frac{t}{c_1 c_2 c_3 c_4} \det \begin{bmatrix} \alpha_1 & \alpha_2 & \alpha_3 & L \end{bmatrix} = t \, \frac{d_1 d_2 d_3}{c_4^4} \det \begin{bmatrix} \alpha_1 & \alpha_2 & \alpha_3 & L \end{bmatrix}
\]
Since $d_1$, $d_2$, $d_3$, and $c_4$ are positive, $c_4$ is uniquely determined by $t$. In addition, $t$ is uniquely
determined by $c_4$ up to sign.

Thus, the subset $S$ with $\alpha_j = e_j^*$ ($j=1,2,3$) and $\alpha_4 = e^*_1 - e^*_2 + e^*_3$ is what we wanted. Choose $L=e_4^*$. Then, the action of $G$ in this $S$ (which is represented by $A$) is $\diag(\frac{d_1}{c_4}, \frac{d_2}{c_4}, \frac{d_3}{c_4}, \pm \frac{c_4^4}{d_1 d_2 d_3})$ for any $c_4>0$. We will compute the solution space with these assumptions, and then quotient by $G$ action as the diagonal matrix.

Let $v_{ij}$ be the $i$-th entry of $v_j$. Now, we consider Vinberg's conditions. Denote $\mu_{ij}:=\alpha_i(v_j)\alpha_{j}(v_i)=4\cos^2(\frac{\pi}{n_{ij}})$ for finite edge order $n_{ij}$. First of all, it is easy to replace $v_{21}$ and $v_{32}$ by $\mu_{12}/v_{12}$ and $\mu_{23}/v_{23}$, respectively. Then, a tuple $(\alpha,v)$ is as below.
\[
[\alpha] = \begin{bmatrix} \alpha_1 \\ \alpha_2 \\ \alpha_3 \\ \alpha_4 \end{bmatrix} = 
\begin{bmatrix}
    1 & 0 & 0 & 0 \\ 0 & 1 & 0 & 0 \\ 0 & 0 & 1 & 0 \\ 1 & -1 & 1 & 0
\end{bmatrix} \quad \& \quad
[v] = \begin{bmatrix} v_1 & v_2 & v_3 & v_4 \end{bmatrix} =
\begin{bmatrix}
    2 & v_{12} & v_{13} & v_{14} \\
    \frac{\mu_{12}}{v_{12}} & 2 & v_{23} & v_{24} \\
    v_{31} & \frac{\mu_{23}}{v_{23}} & 2 & v_{34} \\
    0 & 0 & 0 & v_{44}
\end{bmatrix}
\]
The others of Vinberg's conditions can be described as below.
\begin{itemize}
    \item[(1)] $v_{13} v_{31} \geq 4$
    \item[(2)] $2-\frac{\mu_{12}}{v_{12}}+v_{31}=\frac{\mu_{14}}{v_{14}}$
    \item[(3)] $\displaystyle \left( v_{12}-2+\frac{\mu_{23}}{v_{23}} \right) v_{24} \geq 4$ 
    \item[(4)] $v_{13}-v_{23}+2=\frac{\mu_{34}}{v_{34}}$
    \item[(5)] $v_{14} - v_{24} + v_{34} = 2$
    \item[(6)] $v_{31}$, $v_{12}$, $v_{13}$, $v_{23}$, $v_{14}$, $v_{24}$, $v_{34}$ are negative.
\end{itemize}
Here is the claim that (1) and (3) are always true by the other conditions. The conditions (2) and (4) can be simplified as below.
\[
v_{31} = \frac{\mu_{14}}{v_{14}} + \frac{\mu_{12}}{v_{12}} - 2 \quad \text{and} \quad v_{13} = v_{23} + \frac{\mu_{34}}{v_{34}} - 2
\]
Since $v_{14}$, $v_{12}$, $v_{23}$, $v_{34}$ are negative, we can conclude that $v_{31}<-2$ and $v_{12}<-2$, hence (1) is always true. Similarly, apply (5) into (3) with $v_{24} = v_{14} + v_{34} -2$, then (3) can be simplified as below.
\[
\left( v_{12} -2 +\frac{\mu_{23}}{v_{23}} \right) \left( v_{14}+v_{34}-2 \right) \geq 4
\]
Since each factor is less than -2 by negativity of variables, the multiple is always greater than 4, hence (4) is always true. Then, $[v]$ can be presented as below.

\[
[v] = \begin{bmatrix} v_1 & v_2 & v_3 & v_4 \end{bmatrix} =
\begin{bmatrix}
    2 & v_{12} & v_{23}+\frac{\mu_{34}}{v_{34}}-2 & v_{14} \\
    \frac{\mu_{12}}{v_{12}} & 2 & v_{23} & v_{14}+v_{34}-2 \\
    \frac{\mu_{14}}{v_{14}}+\frac{\mu_{12}}{v_{12}}-2 & \frac{\mu_{23}}{v_{23}} & 2 & v_{34} \\
    0 & 0 & 0 & 2
\end{bmatrix}
\]

Then, here is the Cartan matrix.

\[
M=
\begin{bmatrix}
    2 & v_{12} & v_{23}+\frac{\mu_{34}}{v_{34}}-2 & v_{14} \\
    \frac{\mu_{12}}{v_{12}} & 2 & v_{23} & v_{14}+v_{34}-2 \\
    \frac{\mu_{14}}{v_{14}}+\frac{\mu_{12}}{v_{12}}-2 & \frac{\mu_{23}}{v_{23}} & 2 & v_{34} \\
    \frac{\mu_{14}}{v_{14}} & v_{12}+\frac{\mu_{23}}{v_{23}}-2 & \frac{\mu_{34}}{v_{34}} & 2
    \end{bmatrix}
\]

Now, check the $G$-orbit. The action of $G$ is $A=\diag(\frac{1}{c_4}, \frac{1}{c_4}, \frac{1}{c_4}, \pm c_4^3)$, where $c_4 \in \R_+$. It is remarkable that $G$-action does not affect $\alpha_j$ for $j=1,2,3,4$. For example, $\alpha_1$ maps to the below by $A\in G$.
\[
c_1^{-1} \alpha_1 A^{-1} = \diag(1,1,1,\pm \frac{1}{c_4^4}) \alpha_1
\]
Note that $G$-action affects only the fourth entry of $\alpha_1$. Since we assumed that $\alpha_1 = e_1^*$, the fourth entry is always zero. Similarly for $\alpha_j$ with $j=2,3,4$, $G$-action does not affect $\alpha_j$. Moreover, $v_j$ with $j=1,2,3$ are also fixed by $G$ by $v_j \mapsto c_j A v_j$. For $j=4$, $G$ acts on $v_4$ as below.
\[
c_4 A v_4 = \diag(1, 1, 1, \pm c_4^4)
\begin{bmatrix}
    v_{14} \\ v_{24} \\ v_{34} \\ v_{44}
\end{bmatrix}
=
\begin{bmatrix}
    v_{14} \\ v_{24} \\ v_{34} \\ \pm c_4^4 v_{44}
\end{bmatrix}
\]
Moreover, if we check reflections, then $R_1$, $R_2$, $R_3$ are fixed. $G$ acts on $R_4$ as below.
\[
R_4 = \Id - \alpha_4 \otimes v_4 =
\begin{bmatrix}
    1-v_{14} & v_{14} & -v_{14} & 0 \\
    -v_{24} & 1+v_{24} & -v_{24} & 0 \\
    -v_{34} & +v_{34} & 1-v_{34} & 0 \\
    \mp c_4^4 \, v_{44} & \pm c_4^4 \, v_{44} & \mp c_4^4 \, v_{44} & 1
\end{bmatrix}
\]
By Lemma \ref{DGKLM Lemma3.26}, since $V_\alpha$ is one dimensional vector subspace, the representation among such $R_4$ is semisimple if and only if $v_{44}=0$. Therefore, the $\Defsscon(\hat{P})$ is homeomorphic to
\[
\{v_{12}<0\} \times \{v_{23}<0\} \times \{v_{14}<0\} \times \{v_{34} <0\}
\]
which is homeomorphic to $\R^4$.

\subsection{Proof of the main theorem (1), (3), and (5)}
\label{ch:3.2}

The general position case and the concurrent case induce the same Coxeter group as a reflection group. This implies that the orbifolds of those two cases are combinatorially equivalent. We want to show that there are no other cases except the general position case and the concurrent case. More precisely, if we assume that the dimension of $\operatorname{span}\{\alpha_1, \alpha_2, \alpha_3, \alpha_4\}$ is lower than or equal to 2, then the convex polyhedral cone $K$ does not exist. It is sufficient to show for dimension 2. We may assume that $\alpha_1=e_1^*$ and $\alpha_2=e_2^*$. Then, there is a natural projection $\R^4 \rightarrow \R^2$ with $(a,b,0,0) \mapsto (a,b)$, say $\alpha_j^\star \in \R^2$. Then $\alpha_j^\star$'s should induce a one-dimensional polyhedron in $\RP^1$ with four sides. However, even if we allow defining one-dimensional polyhedrons, it should have only two sides, which are the vertices of a closed interval. Therefore, it is enough to consider the dimension of span of $\alpha_j$'s with three or four, which are the concurrent case and the general position case, respectively.

We will see a subset $S$ such that we can see both the general position case and the concurrent case. Let $\alpha_j = e_j^*$ for $j=1,2,3$. In a similar way with chapter \ref{ch:3.1}, we can choose $L \in \Ann\{ v_1, v_2, v_3 \}$ and use $\alpha_0 = t \, L$ for $t \in \R-\{0\}$. Then, we can set $A \in G$ as below.
\[
A=\begin{bmatrix}
    c_1^{-1} \alpha_1 \\ c_2^{-1} \alpha_2 \\ c_3^{-1} \alpha_3 \\ c_{4}^{-1} t L
\end{bmatrix} \in \SL_\pm(4,\R)
\]
We can compute [w] using $A$ and $D=\diag(c_1, c_2, c_3, c_4)$
\[
\begin{bmatrix} w_1 & w_2 & w_3 & w_4 \end{bmatrix} = \begin{bmatrix} c_1 A v_1 & c_2 A v_2 & c_3 A v_3 & c_4 A v_4 \end{bmatrix}
= D^{-1} \begin{bmatrix}
     2 & M_{12} & M_{13} & M_{14} \\
    M_{21} & 2 & M_{23} & M_{24} \\
    M_{31} & M_{32} & 2 & M_{34} \\
    0 & 0 & 0 & t L(v_4)
\end{bmatrix} D
\]
Assume that (2,1), (3,1), and (1,4) entries of the above matrix are fixed to $-1$. In other words, let $c_2 = -c_1 M_{21}$, $c_3 = -c_1 M_{31}$, and $c_4 = -c_1 / M_{14}$. If we consider the determinant of the matrix,
\[
\pm 1 = \det(A) = \frac{t}{c_1 c_2 c_3 c_4} \det
\begin{bmatrix} \alpha_1 & \alpha_2 & \alpha_3 & L \end{bmatrix}
= -t \frac{M_{14}}{c_1^4 M_{21} M_{31}} \det
\begin{bmatrix} \alpha_1 & \alpha_2 & \alpha_3 & L \end{bmatrix}.
\]
Hence, $t$ is uniquely determined by $c_1$ up to sign. Then, $[w]$ can be simplified as below.
\[
\begin{bmatrix} w_1 & w_2 & w_3 & w_4 \end{bmatrix}
= \begin{bmatrix}
     2 & -M_{12}M_{21} & -M_{13}M_{31} & -1 \\
    -1 & 2 & \frac{M_{31}}{M_{21}} M_{23} & \frac{M_{14}}{M_{21}} M_{24} \\
    -1 & \frac{M_{21}}{M_{31}} M_{32} & 2 & \frac{M_{14}}{M_{31}} M_{34} \\
    0 & 0 & 0 & t L(v_4)
\end{bmatrix}
\]

Therefore, we can conclude that the subset $S$ with assumptions $\alpha_j = e_j^*$ ($j=1,2,3$) and
\[
v_1 = \begin{bmatrix} 2 \\ -1 \\ -1 \\ 0 \end{bmatrix} \quad
v_2 = \begin{bmatrix} \cdot \\ 2 \\ \cdot \\ 0 \end{bmatrix} \quad 
v_3 = \begin{bmatrix} \cdot \\ \cdot \\ 2 \\ 0 \end{bmatrix} \quad 
v_4 = \begin{bmatrix} -1 \\ \cdot \\ \cdot \\ \cdot \end{bmatrix}
\]
is exactly what we wanted. We will call this section the \textit{standard position}. Then, we can denote a tuple $(\alpha,v)$ as below.
\[
[\alpha] = \begin{bmatrix} \alpha_1 \\ \alpha_2 \\ \alpha_3 \\ \alpha_4 \end{bmatrix} = 
\begin{bmatrix}
    1 & 0 & 0 & 0 \\ 0 & 1 & 0 & 0 \\ 0 & 0 & 1 & 0 \\ a_1 & a_2 & a_3 & a_4
\end{bmatrix} \quad \& \quad
[v] = \begin{bmatrix} v_1 & v_2 & v_3 & v_4 \end{bmatrix} =
\begin{bmatrix}
    2 & -\mu_{12} & v_{13} & -1 \\
    -1 & 2 & v_{23} & v_{24} \\
    -1 & \frac{\mu_{23}}{v_{23}} & 2 & v_{34} \\
    0 & 0 & 0 & v_{44}
\end{bmatrix}
\]
From the standard position, it is remarkable that the general position case and the concurrent case are determined by that the fourth entry of $\alpha_4$ and the fourth entry of $v_4$ (say $a_4$ and $v_{44}$, respectively) are zero or not. To classify the cases more precisely, we will say (\rom{1}) case for $a_4\neq 0$, (\rom{2}) case for $a_4=0$ and $v_{44}\neq0$, and (\rom{3}) case for $a_4=0$ and $v_{44}=0$. Then, the general position case is (\rom{1}), and the concurrent case is the union of (\rom{2}) and (\rom{3}).

Consider (\rom{1}) case. We want to find the conditions under which $\{\alpha_j\}$ forms a non-empty 4-sided convex polyhedral cone $K$. By Proposition \ref{Vin71 Prop13}, if we satisfy the below equation, then $\{\alpha_j\}$ forms such $K$.
\[
L_\alpha \cap L^+_r (M^\circ) = \{0\}
\]
Recall that $L_\alpha$ is the space of linear relations among the $\alpha_j$.
\[
L_\alpha := \{ (d_1, d_2, d_3, d_4) \in \R^4 \; | \; d_1 \alpha_1 + d_2 \alpha_2 + d_3 \alpha_3 + d_4 \alpha_4 = 0 \}
\]
Since $\alpha_1$, $\alpha_2$, $\alpha_3$, and $\alpha_4$ are linearly independent if $a_4 \neq 0$, $L_{\alpha}$ consists of only the origin. This implies that any $\alpha_4$ with $a_4 \neq 0$ forms the Coxeter orbifold of the general position.

Now, consider (\rom{2}) and (\rom{3}) cases. By Proposition \ref{Vin71 Prop14}, similar to the proof in chapter \ref{ch:3.1}, $\alpha_4$ satisfies the below equation
\[
\alpha_4 = -d_2 \alpha_2 + d_1 \alpha_1 + d_3 \alpha_3
\]
with $d_1$, $d_2$, $d_3$ being positive. Since we assumed that $\alpha_j = e_j^*$ for $j=1,2,3$, we get $a_1 >0$, $a_3 > 0$, and $a_2 < 0$.

Therefore, we compute the solution space as below.
\[
\begin{bmatrix}
    2 & -\mu_{12} & v_{13} & -1 \\
    -1 & 2 & v_{23} & v_{24} \\
    -1 & \frac{\mu_{23}}{v_{23}} & 2 & v_{34} \\
    2a_1-a_2-a_3 & -a_1\mu_{12}+2a_2+a_3\frac{\mu_{23}}{v_{23}} & a_1v_{13}+a_2 v_{23} + 2a_3 & -a_1 + a_2v_{24} + a_3v_{34} + a_4v_{44}
\end{bmatrix}
\]
\[
=\begin{bmatrix}
    2 & -\mu_{12} & v_{13} & -1 \\
    -1 & 2 & v_{23} & v_{24} \\
    -1 & \frac{\mu_{23}}{v_{23}} & 2 & v_{34} \\
    -\mu_{14} & -a_1\mu_{12}+2a_2+a_3\frac{\mu_{23}}{v_{23}} & \frac{\mu_{34}}{v_{34}} & 2
\end{bmatrix}
\]

\begin{itemize}
    \item[(1)] $-v_{13} \geq 4$
    \item[(2)] $(-1)(2a_1 - a_2 - a_3) = \mu_{14}$
    \item[(3)] $v_{24} \left(-a_1\mu_{12}+2a_2+a_3\frac{\mu_{23}}{v_{23}}\right) \geq 4$
    \item[(4)] $a_1 v_{13} + a_2 v_{23} + 2a_3 = \frac{\mu_{34}}{v_{34}}$
    \item[(5)] $-a_1 + a_2 v_{24} + a_3 v_{34} + a_4 v_{44} = 2$
    \item[(6)] $v_{23}$, $v_{24}$, $v_{34}$ are negative.
    \item[(7)] $a_4 \neq 0$ or ($a_4 =0$ and $a_1>0$, $a_2<0$, $a_3>0$)
\end{itemize}
The seventh condition serves as the criterion that distinguishes between the general position case and the concurrent case. Note that this subset $S$ has an orbit of $G$, similarly to the concurrent case. Choose $L$ with $e_4^*$ so that the $G$-action is represented $A=\diag(c_1^{-1}, c_2^{-1}, c_3^{-1}, t c_4^{-1})$, where $t = \pm c_1^4 \in \R-\{0\}$ is derived by the determinant. Then, for $j=1,2,3$, $\alpha_j$ and $v_j$ are fixed by $G$.
\[
\alpha_j \mapsto c_j^{-1} \alpha_j A^{-1} = \diag(1,1,1,t^{-1}) \alpha_j = \alpha_j \quad \& \quad
v_j \mapsto c_j A v_j = \diag(1,1,1,t) v_j = v_j
\]
For $j=4$, the action of $G$ can be parameterized by $c_1$.
\[
\alpha_4 = \begin{bmatrix} a_1 \\ a_2 \\ a_3 \\ a_4 \end{bmatrix} \mapsto c_4^{-1} \begin{bmatrix} a_1 \\ a_2 \\ a_3 \\ a_4 \end{bmatrix} \diag(c_1, c_2, c_3, t^{-1} c_4) =
\begin{bmatrix} a_1 \\ a_2 \\ a_3 \\ t^{-1} a_4
\end{bmatrix}
\]
\[
v_4 = \begin{bmatrix} v_{14} & v_{24} & v_{34} & v_{44} \end{bmatrix} \mapsto c_4 A v_4 =
\begin{bmatrix} v_{14} & v_{24} & v_{34} & t v_{44} \end{bmatrix}
\]
Note that $G$ acts only on the fourth entries of $\alpha_4$ and $v_4$ with the multiple of two being constant. 

Now, we check the reflections $R_j$. It is easy to check that the reflections $R_1$, $R_2$, and $R_3$ are fixed under $G$ action, represented by $t \in \R-\{0\}$ for every case. First, we will see $R_4$ of (\rom{1}) case, denote $R_4^{(\rom{1})}$.
\[
R_4^{(\rom{1})} = \Id - \alpha_4 \otimes v_4 =
\begin{bmatrix}
    1+a_1 & a_2 & a_3 & -t a_4 \\
    -a_1 v_{24} & 1-a_2 v_{24} & -a_3 v_{24} & -t a_4 v_{24} \\
    -a_2 v_{34} & -a_2 v_{34} & 1-a_3 v_{34} & -t a_4 v_{34} \\
    -\frac{a_1}{t}v_{44} & -\frac{a_2}{t}v_{44} & -\frac{a_3}{t}v_{44} & 1-a_4 v_{44}
\end{bmatrix}
\]
If we see the section with $a_4$ and $v_{44}$ as the coordinate plane, quotient by $G$ is equivalent to quotient by hyperbola of $a_4 v_{44} = c \in \R-\{0\}$ if $v_{44}\neq 0$. If $v_{44}=0$, then quotient by $G$ is equivalent to $a_4$-axis. Let the case denote ($\rom{1}'$).
\[
R_4^{(\rom{1}')} = \Id - \alpha_4 \otimes v_4 =
\begin{bmatrix}
    1+a_1 & a_2 & a_3 & -t a_4 \\
    -a_1 v_{24} & 1-a_2 v_{24} & -a_3 v_{24} & -t a_4 v_{24} \\
    -a_2 v_{34} & -a_2 v_{34} & 1-a_3 v_{34} & -t a_4 v_{34} \\
    0 & 0 & 0 & 1
\end{bmatrix}
\]
\begin{figure}[h]
    \centering
    \includegraphics[height=5cm]{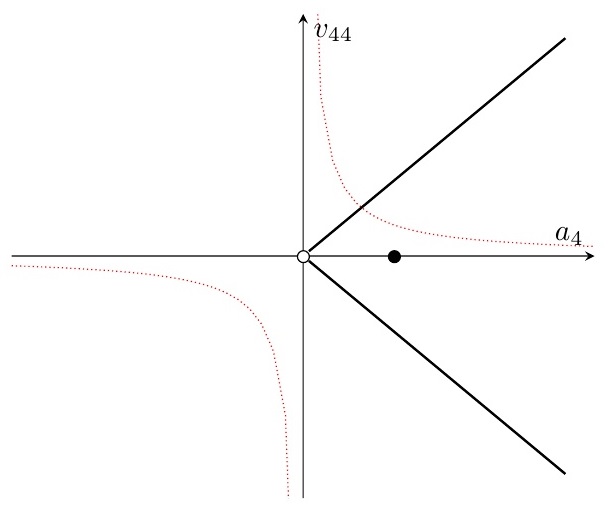}
    \caption{The section of $\Def(\hat{P})$ of the general position case with two coordinates $a_4$ and $v_{44}$}
    \label{Figure graph of general position case}
\end{figure}

The solution space of the general position case quotient by hyperbolas is the union of two lines and one point as the Figure \ref{Figure graph of general position case}. Moreover, if we find the subset of the solution space consisting of $a_4\neq0$ and $v_{44}=0$, by Proposition \ref{Vin71 Prop18}, then $\operatorname{rank}(M)=\dim(\operatorname{span}\{v_i\})=3$. Hence, the determinant of the Cartan matrix is zero. If we compute the determinant directly, then
\[
\det(M) = (4-M_{13}M_{31})(4-M_{24}M_{42})-E
\]
for some $E>0$. (Refer to the details in chapter \ref{ch:5.2}.) Then, the solution space of $(\rom{1})$ is homeomorphic to $\R^4$, same as the concurrent case.

Next, focus on $R_4$ of (\rom{2}) and (\rom{3}) case, denote $R_4^{(\rom{2})}$, $R_4^{(\rom{3})}$, respectively.
\[
R_4^{(\rom{2})} = \Id - \alpha_4 \otimes v_4 =
\begin{bmatrix}
    1+a_1 & a_2 & a_3 & 0 \\
    -a_1 v_{24} & 1-a_2 v_{24} & -a_3 v_{24} & 0 \\
    -a_2 v_{34} & -a_2 v_{34} & 1-a_3 v_{34} & 0 \\
    -\frac{a_1}{t}v_{44} & -\frac{a_2}{t}v_{44} & -\frac{a_3}{t}v_{44} & 1
\end{bmatrix}
\]
\[
R_4^{(\rom{3})} = \Id - \alpha_4 \otimes v_4 =
\begin{bmatrix}
    1+a_1 & a_2 & a_3 & 0 \\
    -a_1 v_{24} & 1-a_2 v_{24} & -a_3 v_{24} & 0 \\
    -a_2 v_{34} & -a_2 v_{34} & 1-a_3 v_{34} & 0 \\
    0 & 0 & 0 & 1
\end{bmatrix}
\]
By Lemma \ref{DGKLM Lemma3.26}, we can see that $R_4^{(\rom{1})}$ and $R_4^{(\rom{3})}$ are semisimple, but $R_4^{(\rom{1}')}$ and $R_4^{(\rom{2})}$ are not.

\begin{figure}[h]
    \centering
    \includegraphics[height=5cm]{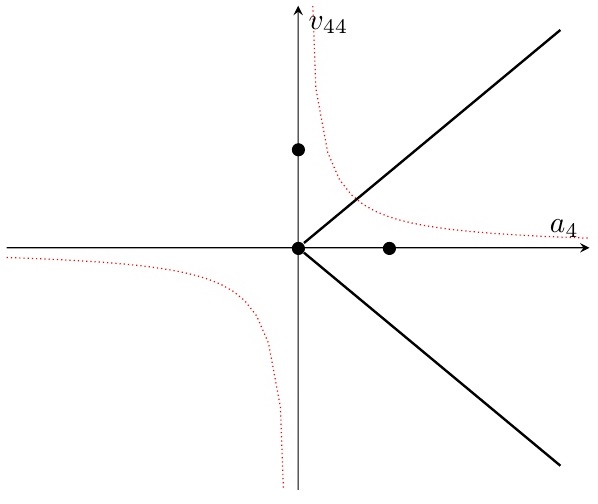}
    \includegraphics[height=5cm]{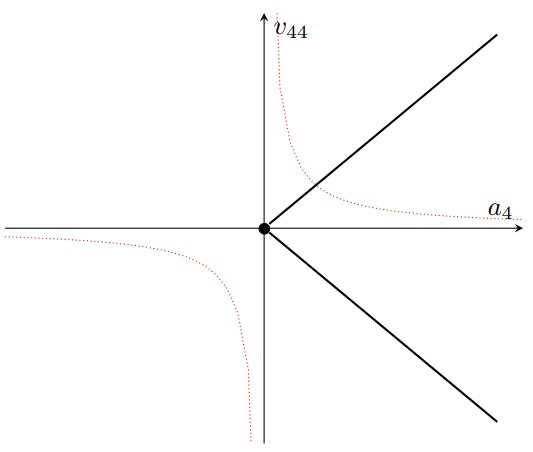}
    \caption{The section of $\Def(\hat{P})$ and $\Defss(\hat{P})$ with two coordinates $a_4$ and $v_{44}$}
    \label{Figure graph standard}
\end{figure}

By the Figure \ref{Figure graph standard}, we can notice that $\Defss(\hat{P})$ of the general position case becomes disconnected by the concurrent case, which is the origin of the coordinate system. Therefore, we can conclude that the deformation space of the general position case with semisimple representations consists of two connected components. In addition, two points representing the case of $(\rom{1}')$ or $(\rom{2})$ are non-Hausdorff components in $\Def(\hat{P})$.

\begin{remark}
The geometric meanings of the non-Hausdorff components are as follows. In case $(\rom{1}')$, the kernels of linear functionals form a tetrahedron, whereas the eigenvectors of the projective reflections span a codimension-one subspace. Therefore, the union of the images of the convex polyhedral cone under projective reflections  $\cup_{\gamma\in \Gamma}\gamma K$ has codimension one. Therefore, case $(\rom{1}')$ can be regarded as the concurrent case. In case $(\rom{2})$, the kernels of linear functionals form the concurrent case, but one eigenvector of projective reflections is tilted. Nevertheless, although one eigenvector of the projective reflections is tilted, the image of the convex polyhedral cone $\gamma K$ under each projective reflection is identical to its image in the concurrent case.
\end{remark}

Now, find the solution space of this case. Since we showed that the multiple $a_4 v_{44}$ is constant in the same $G$-orbit, we can consider $a_4 v_{44}$ as a real variable. Then, we can simplify Vinberg's conditions of this subset $S$ using $M_{13}M_{31}$ and $M_{24}M_{42}$. Here, $v_{13}=-M_{13}M_{31}$.
\[
\begin{bmatrix}
    a1 \\ a2 \\ a3 \\ a_4 v_{44}
\end{bmatrix}
=
\begin{bmatrix}
    2 & -1 & -1 & 0 \\
    -\mu_{12} & 2 & \frac{\mu_{23}}{v_{23}} & 0 \\
    v_{13} & v_{23} & 2 & 0 \\
    -1 & v_{24} & v_{34} & 1
\end{bmatrix}^{-1}
\begin{bmatrix}
    -\mu_{14} \\ \frac{M_{24}M_{42}}{v_{24}} \\ \frac{\mu_{34}}{v_{34}} \\ 2
\end{bmatrix}
\]
Then, $a_1$, $a_2$, $a_3$, and $v_{12}$ are determined by $v_{23}$, $v_{24}$, $v_{34}$, $T_{13} := M_{13}M_{31}$, and $T_{24} := M_{24}M_{42}$. The solution space is the below.
\[
\{v_{23}<0\} \times \{v_{24}<0\} \times \{v_{34}<0\} \times \{T_{13} \geq 4\} \times \{ T_{24} \geq 4 \}
\]
Therefore, $\Defss(\hat{P})$ is homeomorphic to $\R^3 \times [4,\infty)^2$.

\subsection{Proof of the main theorem (4)}
\label{ch:3.3}

While proving this theorem, there was an attempt to take the subset with $\alpha_j = e^*_j$ with $j=1,2,3,4$, and $v_1 = [2,-1,-1,-1]^{\operatorname{T}}$, same as the proof in chapter \ref{ch:2.3.2}. We expected that the solution space induced by this subset implies the deformation space of the general position case. However, this subset includes some non-semisimple representations. This makes some technical issues for the meaning of the general position case.

The basic idea of the proof is similar to what we have done, hence we omit the well-definedness of the subset $S$ in detail. Focus on the subset $S$ with $\alpha_j = e^*_j$ with $j=1,2,3,4$ and $v_1 = [2,-1,-1,-1]^{\operatorname{T}}$. Since $[\alpha]$ is an identity matrix, $[v]$ is identified with the Cartan matrix $M$. If we apply Vinberg's conditions, then we can compute $[v]$ directly.
\[
[v] = \begin{bmatrix} v_1 & v_2 & v_3 & v_4 \end{bmatrix} =
\begin{bmatrix}
     2 & -\mu_{12} & v_{13} & -\mu_{14} \\
    -1 & 2 & v_{23} & v_{24} \\
    -1 & \frac{\mu_{23}}{v_{23}} & 2 & v_{34} \\
    -1 & v_{42} & \frac{\mu_{34}}{v_{34}} & 2
\end{bmatrix}
\]

These conditions are equivalent to the belows.
\begin{itemize}
    \item[(1)] $(-1)v_{13} \geq 4$.
    \item[(2)] $v_{24}v_{42} \geq 4$.
    \item[(3)] $v_{42}$, $v_{23}$, $v_{24}$, $v_{34}$ are negative. 
\end{itemize}

Then, we can find that the solution space is
\[
\{v_{13} \leq -4\} \times \{v_{23}<0\} \times \{v_{34}<0\} \times \{ (v_{24},v_{42})\in \R^2 \; | \; v_{24}v_{42} \geq 4, \; v_{24}<0 \}
\]
Note that if we use $M_{13}M_{31}$ and $M_{24}M_{42}$, then $v_{13}=-M_{13}M_{31}=-T_{13}$ and $\displaystyle v_{42}=\frac{M_{24}M_{42}}{v_{24}}=\frac{T_{24}}{v_{24}}$. Hence, we can reparametrize the solution space
\[
\{T_{13}\geq4\} \times \{v_{23}<0\} \times \{v_{24}<0\} \times \{v_{34}<0\} \times \{T_{24}\geq4\} 
\]
, which is homeomorphic to $\R^3 \times \left[4,\infty\right)^2$.

It is remarkable that this solution space is homeomorphic to $\Defss(\hat{P})$. This result can be explained by the following proposition.

\begin{proposition}[Fact 3.28, \cite{DGKLM23}]\label{DGKLM Fact3.28}

For any $M \in \Mat(n,\R)$ of rank $\leq \dim(V)$ that is (weakly) compatible with the Coxeter group $\Gamma$, there exists a unique conjugacy class of semisimple representations $\rho \in \Hom(\Gamma, G)$ with Cartan matrix $M$.

\end{proposition}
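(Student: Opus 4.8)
I would split this into existence and uniqueness of a semisimple realization of a given compatible Cartan matrix, each being linear algebra resting on the splitting criterion of Lemma~\ref{DGKLM Lemma3.26}. Throughout let $f$ be the number of generators $R_s$ of $\Gamma$, let $n=\dim V$ and $r=\operatorname{rank}(M)\le n$, and identify a reflection representation with a tuple $(\alpha,v)=(\alpha_1,\dots,\alpha_f,v_1,\dots,v_f)$, $R_s=\Id-\alpha_s\otimes v_s$, $M_{st}=\alpha_s(v_t)$; such a tuple is unique only up to the rescalings $(\alpha_s,v_s)\mapsto(c_s^{-1}\alpha_s,c_sv_s)$, which conjugate $M$ by a positive diagonal matrix, so ``Cartan matrix $M$'' is understood up to that ambiguity (or after fixing a section).

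For existence I would factor the rank-$r$ matrix $M=BC$ with $B\in\Mat(f\times r,\R)$, $C\in\Mat(r\times f,\R)$ both of rank $r$, set $V=\R^n$, let $v_t$ be the $t$-th column of $C$ padded with $n-r$ zeros, and let $\alpha_s$ be the $s$-th row of $B$ padded with $n-r$ zeros (this uses $r\le n$). Then $\alpha_s(v_t)=M_{st}$, the diagonal value $M_{ss}=2$ yields $R_s^2=\Id$, and $M_{st}M_{ts}=4\cos^2(\pi/n_{st})$ (resp.\ $\ge4$ when $n_{st}=\infty$) is exactly what forces $(R_sR_t)^{n_{st}}=\Id$ by the classical two-dimensional computation on $\operatorname{span}\{v_s,v_t\}$ from Vinberg's reflection-group theory, so $(\alpha,v)$ defines a homomorphism $\rho\colon\Gamma\to G$. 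Finally $V_v=\operatorname{span}\{v_t\}=\R^r\times 0$ and $V_\alpha=\bigcap_s\ker\alpha_s=0\times\R^{\,n-r}$ are complementary, so $\rho$ is semisimple by Lemma~\ref{DGKLM Lemma3.26}.

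For uniqueness, let $(\alpha,v)$ on $V$ and $(\alpha',v')$ on $V'$ both be semisimple with Cartan matrix $M$. The step I would lean on is that semisimplicity identifies the linear relation modules with kernels of $M$: if $\sum_t c_tv_t=0$ then applying each $\alpha_s$ gives $Mc=0$, while conversely $Mc=0$ forces $\sum_t c_tv_t\in V_v\cap V_\alpha=\{0\}$; hence $\{c:\sum_t c_tv_t=0\}=\ker M$, and likewise for $v'$, so $\dim V_v=\dim V'_{v'}=f-\dim\ker M=r$ and there is a well-defined isomorphism $g_0\colon V_v\to V'_{v'}$ with $g_0(v_t)=v'_t$. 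Dually the relation module of the $\alpha_s$ (and of the $\alpha'_s$) is the left kernel of $M$, so $\dim V_\alpha=\dim V'_{\alpha'}=n-r$; using the splittings $V=V_v\oplus V_\alpha$ and $V'=V'_{v'}\oplus V'_{\alpha'}$ I would extend $g_0$ by an arbitrary isomorphism $V_\alpha\to V'_{\alpha'}$ to a linear isomorphism $g\colon V\to V'$. Then $\alpha'_s(gx)=\alpha'_s(g_0x_v)=\alpha_s(x_v)=\alpha_s(x)$ for $x=x_v+x_\alpha$, so $\alpha'_s=\alpha_s\circ g^{-1}$ and $v'_s=gv_s$, i.e.\ $R'_s=gR_sg^{-1}$; rescaling $g$ by $|\det g|^{-1/n}$ leaves these conjugations unchanged and puts $g$ in $\SL_\pm(n,\R)$, giving the $G$-conjugacy.

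I expect the main obstacle to be the uniqueness half, and within it the one point that for a semisimple reflection representation the linear relation module of the $v_s$'s — and of the $\alpha_s$'s — is exactly the right (resp.\ left) kernel of $M$: this is where Lemma~\ref{DGKLM Lemma3.26} carries the whole argument, and once it is in place the conjugating isomorphism is forced block-by-block on $V_v$ and $V_\alpha$ with only bookkeeping left. On the existence side the only non-formal ingredient is that compatibility of the entries of $M$ is precisely the condition producing the braid relations $(R_sR_t)^{n_{st}}=\Id$, which is classical.
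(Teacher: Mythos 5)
The paper does not prove this statement at all: it is imported verbatim as Fact~3.28 of \cite{DGKLM23} and used as a black box to explain why the solution space of Section~\ref{ch:3.3} agrees with $\Defss(\hat{P})$. So there is no in-paper argument to compare against; what you have written is a self-contained reconstruction of the cited fact, and it is essentially correct and follows the standard line of reasoning. Your existence half (rank factorization $M=BC$, pad to $V=\R^n$ using $r\le\dim V$, then invoke the dihedral two-plane computation to get $(R_sR_t)^{n_{st}}=\Id$, and read off semisimplicity from $V_v=\R^r\times 0$, $V_\alpha=0\times\R^{n-r}$) and your uniqueness half (semisimplicity forces the relation module of the $v_t$'s to be exactly the right kernel of $M$ and that of the $\alpha_s$'s to be the left kernel, so the assignment $v_t\mapsto v_t'$ is well defined and extends over the complementary summands to a conjugating isomorphism) are both sound; the one place where semisimplicity genuinely enters --- the converse inclusion $\ker M\subseteq\{c:\sum_tc_tv_t=0\}$ via $V_v\cap V_\alpha=0$ --- is correctly isolated as the crux, and this is exactly the role Lemma~\ref{DGKLM Lemma3.26} plays. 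Two small points worth making explicit if you write this up: the "Cartan matrix of $\rho$" is only defined up to conjugation by a positive diagonal matrix, so the uniqueness statement should be read (as you note parenthetically) after rescaling the two tuples to realize literally the same $M$; and the dihedral computation needs the observation that for finite $n_{st}\ge 3$ the $2\times2$ block has determinant $4-4\cos^2(\pi/n_{st})>0$, so that $\operatorname{span}\{v_s,v_t\}$ and $\ker\alpha_s\cap\ker\alpha_t$ are genuinely complementary and $R_sR_t$ is diagonalizable with distinct eigenvalues $e^{\pm 2\pi i/n_{st}}$ --- otherwise "eigenvalues are roots of unity" would not yet give $(R_sR_t)^{n_{st}}=\Id$.
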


We took the subset of $\widetilde{\Def}(\hat{P})$ so that its quotient space becomes the stabilizer of $G=\SL_\pm(4,\R)$. This implies that any two non-semisimple representations do not share a common character. Consequently, each point in $\Defgen(\hat{P})$ corresponding to a non-semisimple representation has a unique conjugacy class of semisimple representations, and there is a unique point in $\Defss(\hat{P})$ that belongs to the same conjugacy class of semisimple representations.

\section[Properties of the results]{Properties of the results}
\label{ch:4}

\subsection{Results in character varieties}
\label{ch:4.1}

Porti showed that the neighborhood of character varieties of $\pi_1(\orb^2)$ in $\SL_\pm(n+1,\R)$ can be computed from $\SL_\pm(n,\R)$ in \cite{Por23}. There is a theorem for orientable 2-orbifold (\cite{Por23}, Theorem 1.1), and another for non-orientable 2-orbifold. Note that the Coxeter orbifold with the reflection group is non-orientable since the corner-reflectors of the Coxeter orbifold do not preserve orientation. We will examine the theorem for the non-orientable case. 

Let $\orb^2$ be a compact, non-orientable 2-orbifold with negative Euler characteristic $\chi(\orb^2)<0$. A \textit{type preserving embedding} is an embedding from $\SL_\pm(n,\R)$ to $\SL_\pm(n+1,\R)$ satisfying the following condition.
\[
\SL_\pm(n,\R) \hookrightarrow \SL_\pm(n+1,\R) \quad ; \quad A \mapsto \begin{pmatrix} A & \\ & 1 \end{pmatrix}
\]
A representation $\rho : \pi_1(\orb^2) \rightarrow \SL_\pm(n,\R)$ is a \textit{type preserving representation} if it maps orientation preserving elements in $\pi_1(\orb^2)$ to matrices of determinant 1 and orientation reversing elements to matrices of determinant $-1$. Note that the representation in $\Hom^{\operatorname{ref}}(\pi_1(\orb^2),\SL_\pm(n,\R))$ is a type preserving representation. 

\begin{theorem}[Theorem 3.6(2), \cite{Por23}]\label{Por23 Thm3.6(2)}

Let $\orb^2$ be a compact, non-orientable 2-orbifold with $\chi(\orb^2)<0$, $\rho : \pi_1(\orb^2) \rightarrow \SL_\pm(n,\R)$ be a type preserving representation, and $d_{tp}=\dim H^1(\orb^2,\R^n_\rho) \geq 0$. Assume that $\rho$ restricted to the orientation covering of $\orb^2$ is $\C$-irreducible. Then, for the type preserving embedding, a neighborhood of the character of $\rho$ in $\chi(\orb^2,\SL_\pm(n+1,\R))$ is homeomorphic to
\[
U \times \R^b \times \operatorname{Cone}(\Sphere^{d_{tp}-1} \times \Sphere^{d_{tp}-1})/\sim
\]
where $U$ is a neighborhood of the character of $\rho$, and $\sim$ is an equivalence of the antipodal map in $\Sphere^{d_{tp}-1} \times \Sphere^{d_{tp}-1} \subset \Sphere^{2d_{tp}-1} \subset \R^{2d_{tp}}$. Moreover, $\chi(\orb^2,\SL_\pm(n,\R))$ corresponds to $U \times \{0\} \times \{0\}$.

\end{theorem}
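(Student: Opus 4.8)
Since this is Porti's theorem, the plan is to recover it through the deformation theory of representations of the orbifold group. The first step is to identify, via the Weil construction, the Zariski tangent space to $\chi(\orb^2,\SL_\pm(n+1,\R))$ at the character of $\rho$ with the group cohomology $H^1(\pi_1(\orb^2),\mathfrak{g}_\rho)$, where $\mathfrak{g}=\mathfrak{sl}(n+1,\R)$ carries the adjoint action of $\SL_\pm(n,\R)$ through the type preserving embedding. Using the $(n+1)\times(n+1)$ block structure coming from that embedding, one decomposes $\mathfrak{g}$ as a $\pi_1(\orb^2)$-module:
\[
\mathfrak{g}_\rho \;\cong\; \mathfrak{sl}(n,\R)_\rho \,\oplus\, \R^n_\rho \,\oplus\, (\R^n_\rho)^{*} \,\oplus\, \R,
\]
where $\R^n_\rho$ is the standard module for $\rho$, the two middle summands are the off-diagonal blocks (more precisely, the standard module and its dual twisted by the orientation character $\epsilon=\det\rho$), and $\R$ is the trivial line adjusting the trace. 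Passing to $H^1$ splits the tangent space accordingly.

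Next I would pin down the four summands of $H^1$. Since $\orb^2$ is non-orientable I would pass to its orientation double cover $\Sigma\to\orb^2$, on which the hypothesis makes $\rho$ a $\C$-irreducible representation, and then use the transfer together with the Euler-characteristic identity $\dim H^1-\dim H^0-\dim H^2=-\chi(\orb^2)\dim W$ for a coefficient module $W$. Irreducibility kills $H^0$ and $H^2$ of the two off-diagonal modules, so each of their $H^1$'s has dimension $d_{tp}$, and Poincar\'e--Lefschetz duality --- with the twist by the orientation character, which the type preserving hypothesis identifies with $\epsilon$ --- makes the cup-product pairing between them nondegenerate. The $\mathfrak{sl}(n,\R)_\rho$ summand contributes precisely the tangent space to $\chi(\orb^2,\SL_\pm(n,\R))$, whose local model is the neighborhood $U$, and the trivial summand contributes $H^1(\orb^2;\R)\cong\R^{b}$. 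By Goldman--Millson theory the germ of $\chi(\orb^2,\SL_\pm(n+1,\R))$ at the character of $\rho$ is the quadratic cone inside $H^1$ cut out by $[\xi\smile\xi]=0\in H^2$. On a $2$-orbifold the cup-bracket vanishes for degree reasons on the $\mathfrak{sl}(n)$ and trivial directions, so the only surviving component of the equation is the nondegenerate pairing of the two off-diagonal $H^1$'s into $H^2$ induced by the contraction $\R^n\otimes(\R^n)^{*}\to\R$; in coordinates $(u,v)$ on those two factors it becomes a single quadratic form whose zero locus is the affine cone over $\Sphere^{d_{tp}-1}\times\Sphere^{d_{tp}-1}\subset\Sphere^{2d_{tp}-1}\subset\R^{2d_{tp}}$. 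Carrying along the unobstructed factors $U$ and $\R^{b}$ then produces the stated homeomorphism type, with $\chi(\orb^2,\SL_\pm(n,\R))$ sitting in $U\times\{0\}\times\{0\}$ where the off-diagonal coordinates vanish.

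It remains to explain the identification $\sim$: the residual symmetry not seen by $\SL_\pm(n,\R)$ is the one-parameter subgroup rescaling the upper block by $\lambda$ and the lower block by $\lambda^{-1}$, together with the sign ambiguity inherent to $\SL_\pm$, which on the cone coordinates $(u,v)$ reduces to the antipodal involution $(u,v)\mapsto(-u,-v)$; the GIT quotient therefore replaces the bare cone by $\operatorname{Cone}(\Sphere^{d_{tp}-1}\times\Sphere^{d_{tp}-1})/\sim$. The hard part of this program, as always in this circle of ideas, is not the cohomological bookkeeping but establishing that the quadratic cone is the \emph{exact} local model: one must prove that the differential graded Lie algebra controlling the deformations is formal, so that there are no higher obstructions past the quadratic one --- on a $2$-orbifold this again reduces to the degree vanishing above once the non-orientable case has been handled equivariantly on the orientation cover --- and one must verify that ``type preserving'' is exactly the condition matching $\det=\pm1$ across the two components of $\SL_\pm$, so that the slice theorem and the GIT quotient behave as claimed.
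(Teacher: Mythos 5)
This statement is not proved in the paper at all: it is imported verbatim from Porti \cite{Por23} as Theorem 3.6(2) and used only as a black box in Section \ref{ch:4.1}, where the author computes $b=0$ and $d_{tp}=1$ for $D^2(;n_1,n_2,n_3,n_4)$ and checks consistency with the Main Theorem. So there is no in-paper argument to compare yours against; what you have written is an attempted reconstruction of Porti's own proof.

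As such a reconstruction, your outline has the right overall shape --- the block decomposition $\mathfrak{sl}(n+1,\R)\cong\mathfrak{sl}(n,\R)\oplus\R^n\oplus(\R^n)^*\oplus\R$ as a $\pi_1(\orb^2)$-module, the Euler-characteristic count giving each off-diagonal $H^1$ dimension $d_{tp}$, the Goldman--Millson quadratic cone, and the slice analysis of the residual torus $\diag(\lambda I_n,\lambda^{-n})$ producing the identification $\sim$ are all ingredients of Porti's actual argument. But two points you defer are precisely where the content lies, and one of them is stated in a way that would fail as written. First, you justify the vanishing of the obstruction components in the $\mathfrak{sl}(n,\R)$ and trivial directions ``for degree reasons''; there is no degree reason on a $2$-orbifold, where $H^2$ is exactly where obstructions live. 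In fact for a closed non-orientable $2$-orbifold $H^2(\orb^2;\R)=0$ with \emph{trivial} coefficients, so if the off-diagonal blocks really were the untwisted standard module and its dual, the contraction $\R^n\otimes(\R^n)^*\to\R$ would land in a zero group and there would be no cone at all. The theorem only comes out right because the type-preserving condition forces the correct twist by the orientation character into the duality pairing, making the target of the cup product one-dimensional and the pairing nondegenerate; this bookkeeping is the heart of Porti's non-orientable case and cannot be waved at. Second, you acknowledge but do not address the exactness of the quadratic model (no higher obstructions) and the verification that the GIT quotient by the stabilizer really yields only the antipodal identification on $\Sphere^{d_{tp}-1}\times\Sphere^{d_{tp}-1}$ rather than a larger collapse. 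As a proof proposal this is an honest sketch of the known strategy with the hard steps flagged rather than carried out; it is not a proof, and for the purposes of this paper the correct move is what the author does, namely cite \cite{Por23}.
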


Assume that $\orb^2$ is $D^2(;n_1,n_2,n_3,n_4)$ where each corner-reflector has an order of at least 3. To apply the theorem above, we need to compute the first Betti number $b =\dim H^1(\pi_1(\orb^2),\R)$, and the specific quantity $d_{tp}$. First, we can compute the first Betti number by de Rham cohomology. Since $\orb^2$ is topologically a disk, $H^1(\pi_1(\orb),\R)$ is trivial. Therefore, $b=0$. To compute $d_{tp}$, we will use the below lemma.

\begin{lemma}[Lemma 4.6(b), \cite{Por23}]\label{Por23 Lemma4.6(b)}

Let $\orb^2$ be a compact non-orientable hyperbolic 2-orbifold. Let $\rho : \pi_1(\orb^2) \rightarrow \SL_\pm(3,\R)$ be the holonomy of a convex projective structure. Then
\[
d_{tp} = \dim (\operatorname{Teich} (\orb^2)) - f(\partial \orb^2)
\]
where $f(\partial \orb^2)$ is the number of full 1-orbifold boundary components of $\orb^2$.

\end{lemma}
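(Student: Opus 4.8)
By definition $d_{tp}=\dim H^1(\orb^2,\R^3_\rho)$ (Porti's theorem applied with $n=3$, since $\rho$ maps into $\SL_\pm(3,\R)$), where orbifold cohomology may be read as group cohomology of $\pi_1(\orb^2)$, as $\orb^2$ is a good orbifold uniformized by $\mathbb{H}^2$. The plan is to compute this dimension through a twisted Euler characteristic and then match it with $\dim\operatorname{Teich}(\orb^2)$. First I would record the vanishing of the flanking cohomology: since $\rho$ is the holonomy of a convex projective structure, $\rho(\pi_1(\orb^2))$ preserves a properly convex domain of $\RP^2$ and fixes no point, line, or conic, so $\rho$ is irreducible and $H^0(\orb^2,\R^3_\rho)=(\R^3)^{\rho(\pi_1(\orb^2))}=0$; by orbifold Poincar\'e--Lefschetz duality --- with the orientation twist that appears for the non-orientable $\orb^2$ --- $H^2(\orb^2,\R^3_\rho)$ is dual to the invariants of an irreducible module, hence also $0$. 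Therefore $d_{tp}=-\chi(\orb^2,\R^3_\rho)$.

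Next I would evaluate this Euler characteristic and compare it with the count governing $\dim\operatorname{Teich}(\orb^2)$. On an orbifold cell structure, $\chi(\orb^2,\R^3_\rho)$ is a signed sum of terms $\dim(\R^3)^{G_\sigma}/|G_\sigma|$ over cells $\sigma$ with local groups $G_\sigma$. A projective reflection in $\SL_\pm(3,\R)$ has eigenvalues $1,1,-1$, so a mirror edge contributes $\dim(\R^3)^{G_\sigma}=2$; two reflections generating a dihedral local group fix only the common point of their axes, so a corner reflector contributes $\dim(\R^3)^{G_\sigma}=1$; an order-$m$ rotation fixes a single point, so a cone point contributes $\dim(\R^3)^{G_\sigma}=1$. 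These coincide with the fixed-point dimensions of the three-dimensional module $\R^{2,1}\cong\mathfrak{so}(2,1)$ under the holonomy $\rho_0$ of any hyperbolic structure on $\orb^2$ --- a hyperbolic reflection acts on $\R^{2,1}$ with eigenvalues $1,1,-1$, an elliptic element with eigenvalues $1,e^{\pm i\theta}$, and so on. Hence $\chi(\orb^2,\R^3_\rho)=\chi(\orb^2,\mathfrak{so}(2,1)_{\rho_0})$, and since $H^0$ and $H^2$ vanish for $\mathfrak{so}(2,1)_{\rho_0}$ too, $d_{tp}=\dim H^1(\orb^2,\mathfrak{so}(2,1)_{\rho_0})$.

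It remains to identify $H^1(\orb^2,\mathfrak{so}(2,1)_{\rho_0})$ with (a boundary correction of) $\operatorname{Teich}(\orb^2)$. When $\orb^2$ is closed --- the situation actually used in this paper, since the silvered boundary of $D^2(;n_1,n_2,n_3,n_4)$ lies in the interior of the orbifold, so $f(\partial\orb^2)=0$ --- this group cohomology is the Zariski tangent space at $[\rho_0]$ to the Teichm\"uller (Fuchsian) component of the character variety, which is smooth there, giving $d_{tp}=\dim\operatorname{Teich}(\orb^2)$ directly. When $\orb^2$ has $f$ full $1$-orbifold boundary components one works instead with the long exact sequence of the pair $(\orb^2,\partial\orb^2)$ and Poincar\'e--Lefschetz duality relating its absolute, relative, and boundary cohomology with $\mathfrak{so}(2,1)_{\rho_0}$ coefficients: each boundary circle $c_i$ carries a one-dimensional $H^1(\pi_1(c_i),\mathfrak{so}(2,1)_{\rho_0})$ --- the infinitesimal length/twist direction of that geodesic --- and tracking these classes through the sequence yields the correction $f(\partial\orb^2)$ between $H^1(\orb^2,\mathfrak{so}(2,1)_{\rho_0})$ and the tangent space to $\operatorname{Teich}(\orb^2)$. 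The main obstacle is exactly this bookkeeping: pinning down the dimension and the sign of the boundary correction is sensitive to the convention adopted for $\operatorname{Teich}$ of a bounded orbifold (geodesic ends of variable length versus cusped ends) and to how the orientation twist enters the duality for non-orientable $\orb^2$. Since only the clean closed case $f=0$ is needed here, this subtlety does not affect the applications; as a consistency check, the formula gives $d_{tp}=\dim\operatorname{Teich}(D^2(;n_1,n_2,n_3,n_4))=4-3=1$, the moduli dimension of hyperbolic quadrilaterals with prescribed vertex angles $\pi/n_j$.
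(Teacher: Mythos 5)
The paper does not prove this statement: it is imported verbatim from Porti \cite{Por23} (Lemma 4.6(b)) and used as a black box, so there is no internal proof to compare against. Judged on its own terms, your reconstruction has a reasonable skeleton --- vanishing of $H^0$ and $H^2$, reduction to a twisted Euler characteristic over an orbifold cell structure, and comparison with the coefficient module governing $\operatorname{Teich}(\orb^2)$ --- but the pivotal comparison step is wrong as written. First, a small slip: the equivariant cochain complex gives $\chi(\orb^2,V_\rho)=\sum_\sigma(-1)^{\dim\sigma}\dim V^{G_\sigma}$ with no division by $|G_\sigma|$ (the division belongs to the untwisted orbifold Euler characteristic, not to the twisted one). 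More seriously, the module controlling the Teichm\"uller tangent space is the adjoint module $\mathfrak{so}(2,1)\cong\mathfrak{sl}(2,\R)$, and the isomorphism $\mathfrak{so}(2,1)\cong\R^{2,1}$ holds only up to a twist by the determinant (orientation) character; since $\orb^2$ is non-orientable this twist cannot be ignored. Concretely, a reflection acts on $\R^{2,1}$ with eigenvalues $1,1,-1$ but on $\mathfrak{so}(2,1)$ with eigenvalues $1,-1,-1$, so a mirror edge contributes fixed dimension $2$ for $\R^3_\rho$ but $1$ for $\mathfrak{so}(2,1)_{\rho_0}$, and a corner reflector contributes $1$ versus $0$. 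The asserted cell-by-cell coincidence is therefore false; the equality $\chi(\orb^2,\R^3_\rho)=\chi(\orb^2,\mathfrak{so}(2,1)_{\rho_0})$ does still hold in the closed case, but only because mirror edges and corner reflectors occur in equal numbers along each mirror circle (or, more robustly, because Poincar\'e duality together with the self-duality of $\mathfrak{so}(2,1)$ identifies $H^1$ with coefficients in $\mathfrak{so}(2,1)$ and in its orientation twist $\R^{2,1}$) --- neither of which is the argument you gave.

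The remaining gap is the boundary term: the lemma asserts the correction $-f(\partial\orb^2)$, and you explicitly defer this to unexamined ``bookkeeping.'' Since the paper only invokes the lemma for $D^2(;n_1,n_2,n_3,n_4)$, where $f(\partial\orb^2)=0$ and your computation does return $d_{tp}=1$, the application in Section \ref{ch:4.1} is unaffected; but as a proof of the lemma as stated the argument is incomplete, and the one step that is fully spelled out (the eigenvalue matching) needs to be repaired along the lines above.
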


Since $\orb^2$ has no boundary component, $f(\partial \orb^2)$ is zero. We can compute the dimension of Teichmüller space by Thurston's theorem.

\begin{theorem}[Corollary 13.3.7, \cite{Thu22}]\label{Thu22 Cor13.3.7}

Let $\orb$ be an orbifold with $\chi(\orb)<0$. Then,
\[
\dim (\operatorname{Teich} (\orb)) = -3\chi(|\orb|) + 2k + l
\]
where $k$ is the number of cone points, and $l$ is the number of corner reflections.
    
\end{theorem}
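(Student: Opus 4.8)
The plan is to identify $\operatorname{Teich}(\orb)$ with an open smooth piece of the character variety of $\Gamma:=\pi_1^{\operatorname{orb}}(\orb)$ into $\Isom(H^2)$, and to compute its dimension cohomologically as $\dim H^1(\Gamma;\mathfrak{sl}(2,\R)_{\operatorname{Ad}\rho})$, where $\rho$ is the holonomy of a hyperbolic structure on $\orb$; this number is then evaluated by a Fox-calculus count on the standard orbifold presentation of $\Gamma$. Two inputs are needed up front: that $\orb$ carries a hyperbolic structure (the $2$-dimensional orbifold uniformization theorem, so $\operatorname{Teich}(\orb)\neq\emptyset$), and that $\rho$ is a smooth point of the character variety with local dimension $\dim H^1$ --- which holds because $\operatorname{Ad}\rho$ is Zariski-dense, so $H^0$ vanishes, and $H^2(\Gamma;\mathfrak{sl}(2,\R))$ vanishes by Poincar\'e duality for the closed orbifold.

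Then I would run the count. From a presentation of $\Gamma$ with the usual generators --- surface generators of $|\orb|$, one elliptic generator $x_i$ with relator $x_i^{q_i}$ per cone point, and along each mirror circle reflection generators $R_j$ with relators $R_j^2$ together with one ``corner'' relator $(R_jR_{j+1})^{n_j}$ per corner reflector --- one has $\dim H^1=3\cdot(\#\text{generators})-\operatorname{rank}(\partial)-3$, where $\partial$ is the Jacobian of the relators in $\operatorname{Ad}\rho$ and the final $3$ is $\dim B^1=\dim\mathfrak{sl}(2,\R)-\dim\mathfrak{sl}(2,\R)^{\Gamma}$. Each relator is individually a rank-one condition: for $x_i^{q_i}$ because the cocycle value at $x_i$ must lie in the $2$-dimensional kernel of the norm element of $\langle \operatorname{Ad}\rho(x_i)\rangle$; for $R_j^2$ because a reflection acts on $\R^3$ with eigenvalues $(1,-1,-1)$, so $1+\operatorname{Ad}\rho(R_j)$ has $2$-dimensional kernel; and similarly for the corner relators; while the single surface relator has rank $3$. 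Tallying: a cone point contributes $3-1=2$, a mirror circle carrying $l'$ corner reflectors contributes $3l'-2l'=l'$, and the genus-and-boundary data of $|\orb|$ together with the surface relator and the coboundary term contribute $-3\chi(|\orb|)$. Summing these gives $\dim\operatorname{Teich}(\orb)=\dim H^1(\Gamma;\mathfrak{sl}(2,\R))=-3\chi(|\orb|)+2k+l$.

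An alternative, purely geometric, route is to build Fenchel--Nielsen coordinates: cut $\orb$ along a maximal family of essential $1$-suborbifolds (simple closed curves and arcs ending on the mirror locus) into elementary pieces --- generalized pairs of pants, whose Teichm\"uller spaces are computed directly by hyperbolic trigonometry (a pair of pants gives $\R_{>0}^{3}$; a triangle reflection orbifold is rigid; a quadrilateral reflection orbifold is one-dimensional, matching $-3\cdot 1+4=1$) --- then reassemble while keeping one length and one twist per interior curve and suitably constrained data along mirror-adjacent curves, and count by induction on complexity using the additivity of $\chi(|\cdot|)$, $k$ and $l$ under the cuts. I expect the main obstacle --- in either approach --- to be the bookkeeping at the mirror locus: in the cohomological proof, verifying that the $2l'$ relators along a mirror circle impose jointly independent conditions (this fails for an isolated corner reflector but holds for a full circle, and is exactly where care is required), and that the surface, cone, and mirror relators are jointly independent modulo coboundaries; in the geometric proof, tracking precisely how an arc-cut alters $\chi(|\orb|)$ and the corner-reflector count, and how the twist parameter along a mirror-adjacent curve is suppressed or halved. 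One must also supply the uniformization-and-smoothness input above (equivalently, that the length-and-twist map is a local homeomorphism), which is where hyperbolicity --- collar lemmas, convexity of geodesic boundary --- is genuinely used.
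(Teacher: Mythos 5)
You should first be aware that the paper does not prove this statement at all: it is imported verbatim as Thurston's Corollary 13.3.7 and used only as a black box to evaluate $d_{tp}=-3\chi(|\orb|)+2k+l=1$ for $D^2(;n_1,n_2,n_3,n_4)$ in Section 4.1. So there is no in-paper argument to compare against; your proposal has to stand on its own. On its merits, the strategy is the standard one, and your second route (cutting into generalized pairs of pants and reflection polygons and counting length/twist parameters) is essentially Thurston's own proof of 13.3.7. The local computations in the cohomological route are also correct: the norm element of $\langle\operatorname{Ad}\rho(x_i)\rangle$ has rank one because $\operatorname{Ad}\rho(x_i)$ is a rotation of $\mathfrak{sl}(2,\R)\cong\R^{2,1}$ fixing only its axis, and $\operatorname{Ad}$ of a reflection has eigenvalues $(1,-1,-1)$, so $1+\operatorname{Ad}\rho(R_j)$ is indeed a rank-one condition; the resulting tallies ($+2$ per cone point, $+1$ per corner reflector, and e.g. $3l-2l-3=l-3=-3\chi(|D^2|)+l$ for a disk with one mirror circle) reproduce the formula.

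The genuine gap is the one you half-acknowledge, and it is not merely bookkeeping: you need $\operatorname{rank}(\partial)$ to equal the \emph{sum} of the individual relator ranks, and this does not follow from $H^2(\Gamma;\mathfrak{g})=0$. Because $\Gamma$ has torsion, the presentation $2$-complex is not aspherical, so $\operatorname{coker}(\partial)$ is not $H^2(\Gamma;\mathfrak{g})$; the rank-count only yields the inequality $\dim H^1\geq 3n-\sum_i\operatorname{rank}(\partial_i)-3$, and the reverse inequality (joint independence of the cone, reflection, corner, and surface relators modulo coboundaries) must be established by hand or supplied from the geometric side. Two smaller points also need repair: the $H^2$ vanishing you invoke requires the orientation-twisted Poincar\'e duality $H^2(\Gamma;\mathfrak{g})\cong H_0(\Gamma;\mathfrak{g}\otimes\tilde{\R})$, since every orbifold in the statement with a nonempty mirror locus is non-orientable (the paper itself stresses this in Section 4.1); and your presentation omits the boundary generators of $|\orb|$ and the conjugation relations tying them to the reflection generators along each mirror circle, which is exactly where the surface contribution $-3\chi(|\orb|)$ and the mirror contribution interact. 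The cleanest way to close all of this is your geometric route, or a Mayer--Vietoris induction over the pants/polygon decomposition; note also that the paper's Theorem \ref{CG05 ThmA} (Choi--Goldman) is the projective analogue with the structurally identical count $-8\chi(|\orb|)+(6k-2k_2)+(3l-l_2)$, and its proof template specializes to give exactly what you want here.
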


If we apply $\orb = D^2(;n_{12}, n_{23}, n_{34}, n_{14})$, then $d_{tp}=-3 +0 +4 =1$. Then, we should compute $\operatorname{Cone}(\Sphere^{0} \times \Sphere^{0})/\sim$. Recall that $\Sphere^0$ is the set of a pair of antipodal points. Thus, this space is homeomorphic to the real line $\R$. Furthermore, we already know the deformation space of quadrilateral whose edge orders are greater than or equal to 3, $U \cong \R^4$ by Corollary \ref{Cor of CG05}. Therefore, we can conclude that the character variety of $\SL_\pm(4,\R)$ is locally homeomorphic to $\R^5$. Our main result is consistent with Porti's theorems in terms of the dimension of the deformation space. Moreover, we can get a representation of $\SL_\pm(3,\R)$ when $a_4$ and $v_{44}$ are zero. From the proof in chapter \ref{ch:3.2}, we can consider that the product value $a_4 v_{44}$ is a variable in $\R$. Thus, we also checked the additional argument of Theorem \ref{Por23 Thm3.6(2)}. Thus, we conclude that the Figure \ref{Figure graph standard} represents $\operatorname{Cone}(\Sphere^{0} \times \Sphere^{0})/\sim$.

\subsection{Convex cocompactness}
\label{ch:4.2}

Danciger, Gu'eritand, and Kassel studied convex cocompactness, first introducing it in \cite{DGK18} for pseudo-Riemannian hyperbolic space. This concept was later extended to real projective geometry in \cite{DGK23}. The earlier work focused on the right-angled case, while Lee and Marquis examined the general case in \cite{LM19}. These studies were subsequently unified and further developed in \cite{DGKLM23}.

There exist three notions of projective convex cocompactness, of which we will focus on two. Let $H$ be an infinite discrete subgroup of $\GL(V)$. The group $H$ is said to be \textit{naively convex cocompact} in $\mathbb{P}(V)$ if it acts properly discontinuously on some properly convex open subset $\Omega \subset \mathbb{P}(V)$ and cocompactly on some nonempty $H$-invariant closed convex subset $\mathfrak{C} \subset \Omega$. Moreover, $H$ is \textit{convex cocompact} if it is naively convex cocompact and $\mathfrak{C}$ can be chosen ``large enough'' in the sense that its closure in $\mathbb{P}(V)$ contains all accumulation points of all possible $H$-orbits $H \cdot y$ with $y \in \Omega$.

To determine whether a representation $\rho$ induces a convex cocompact subset in $\mathbb{P}(V)$, we refer to Theorem 1.3 in \cite{DGKLM23}. Before stating the theorem, we recall some definitions related to Coxeter groups and the Cartan matrix. Let $\Gamma$ be a Coxeter group. A Coxeter group is \textit{irreducible} if it cannot be written as the direct product of two nontrivial standard subgroups. This is equivalent to stating that a Coxeter group has no order-2 elements. An irreducible Coxeter group is \textit{spherical} if it is finite and \textit{affine} if it is infinite and virtually abelian (i.e., it has an abelian subgroup of finite index). A \textit{standard subgroup} of $\Gamma$ is a subgroup of $\Gamma$ that is also a Coxeter group. A \textit{Cartan submatrix} is the Cartan matrix associated with a standard subgroup.

We introduce two conditions that serve as the assumptions for the following theorem. These conditions can be easily checked if a Coxeter diagram is provided. Suppose that $\Gamma$ is a Coxeter group.

\begin{itemize}
\item [$\neg(IC)$] There do not exist disjoint standard subgroups $\Gamma'$, $\Gamma''$ of $\Gamma$ such that both are infinite and commute.
\item [$(\Tilde{A})$] Let $\Gamma'$ be a standard subgroup of $\Gamma$. If $\Gamma'$ has a Coxeter diagram with $N \geq 3$ nodes and is irreducible and affine, then it must be of type $\Tilde{A}_{N-1}$.
\end{itemize}

\begin{figure}[h]
    \centering
    \includegraphics{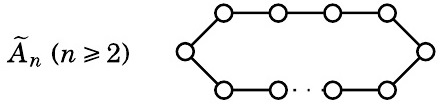} \\
    \includegraphics{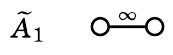}
    \caption{Coxeter diagrams of type $\tilde{A}_k$ (\cite{DGKLM23}, Appendix A)}
    \label{Coxeter diagrams of Ak}
\end{figure}

The following theorem states that if a representation originates from an infinite Coxeter group satisfying these two conditions, then convex cocompactness can be determined using the following equivalences.

\begin{theorem}[Theorem 1.8, \cite{DGKLM23}]
\label{DGKLM Thm1.8}

Let $\Gamma$ be an infinite Coxeter group satisfying conditions $\neg(IC)$ and $(\Tilde{A})$. Let $\rho \in \Hom^{\operatorname{ref}}(\Gamma, \GL(V))$ and $M$ be the Cartan matrix. Then, the following statements are equivalent:

\begin{itemize}
\item [(1)] $\rho(\Gamma)$ is naively convex cocompact in $\mathbb{P}(V)$.
\item [(2)] $\rho(\Gamma)$ is convex cocompact in $\mathbb{P}(V)$.
\item [(3)] For any nonempty irreducible standard subgroup $\Gamma'$ of $\Gamma$, the Cartan submatrix $M'$ is not of zero type, meaning its lowest eigenvalue is nonzero.
\item [(4)] For any standard subgroup $\Gamma'$ of $\Gamma$ of type $\Tilde{A}_k$ with $k \geq 1$, the determinant of the corresponding submatrix is nonzero.
\end{itemize}

\end{theorem}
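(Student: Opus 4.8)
The plan is to establish the four-term equivalence following the strategy of \cite{DGKLM23}, running the cyclic argument $(2)\Rightarrow(1)\Rightarrow(3)\Rightarrow(2)$ together with the combinatorial equivalence $(3)\Leftrightarrow(4)$. Since $\rho\in\Hom^{\operatorname{ref}}(\Gamma,\GL(V))$, Vinberg's theory supplies a Coxeter polytope $P=\Sphere(K)$, and the projectivized interior of its Tits cone is a properly convex $\rho(\Gamma)$-invariant open set $\Omega_{\mathrm{Vin}}$ on which $\rho(\Gamma)$ acts properly discontinuously with $P$ as fundamental domain (Propositions~\ref{Vin71 Prop13} and~\ref{Vin71 Prop14}), while Proposition~\ref{Vin71 Prop18} records when $\rho$ is reduced and ties $\operatorname{rank}(M)$ to $\dim(\operatorname{span}\{v_i\})$. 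The working dictionary throughout is: faces of $P$ correspond to standard subgroups $\Gamma'$; a face is an ordinary face of a properly convex polytope exactly when the Cartan submatrix $M'$ has positive type (finite $\Gamma'$); a face degenerates to an ideal point of $\partial\Omega_{\mathrm{Vin}}$ when $M'$ has zero type; and a face where $M'$ has negative type is a truncatable geodesic wall.

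The implication $(2)\Rightarrow(1)$ is immediate from the definitions. For $(1)\Rightarrow(3)$ I would argue by contraposition: if some nonempty irreducible standard subgroup $\Gamma'$ has $M'$ of zero type, then $\rho(\Gamma')$ preserves a proper subspace of $V$ on which it acts as an affine reflection group, producing a $\rho(\Gamma')$-fixed point $p$ together with a $\rho(\Gamma')$-invariant hyperplane through $p$; the orbit of any interior point accumulates at $p$ along a unipotent-type trajectory, so $p$ lies in the closure of every $\rho(\Gamma)$-invariant convex core yet on $\partial\Omega$ for every properly convex $\Omega$ carrying a proper action, and a growth estimate at $p$ shows no such core can be cocompact, contradicting even naive convex cocompactness. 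Conditions $\neg(IC)$ and $(\Tilde{A})$ enter here to force the local model at $p$ to be standard: $\neg(IC)$ prevents $\rho(\Gamma')$ from being balanced out by a commuting infinite factor, and $(\Tilde{A})$ forces the only irreducible affine obstructions to be of type $\Tilde{A}_k$.

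For the reduction $(3)\Leftrightarrow(4)$: a group of type $\Tilde{A}_k$ is irreducible and its $(k+1)\times(k+1)$ Cartan matrix always contains a nondegenerate $k\times k$ principal minor (delete one node from the cycle to obtain type $A_k$, which is positive), so its lowest eigenvalue vanishes iff its determinant does, and hence $(3)\Rightarrow(4)$ is automatic. Conversely, a reducible standard subgroup of zero type must have an irreducible component of zero type, since the lowest eigenvalue of a block-diagonal matrix is the minimum over blocks, so it suffices to rule out irreducible $\Gamma'$ of zero type that are not of type $\Tilde{A}_k$; invoking the classification of irreducible Coxeter groups admitting a zero-type Cartan matrix, using $(\Tilde{A})$ to force the affine ones to be $\Tilde{A}_k$ and $\neg(IC)$ to exclude the hidden-product configurations, leaves $\det M'=0$ for an $\Tilde{A}_k$-submatrix as the only surviving case, which contradicts $(4)$.

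The substantive implication is $(3)\Rightarrow(2)$, and I expect it to be the main obstacle. Assuming no standard subgroup is of zero type, I would build the convex core inside $\Omega_{\mathrm{Vin}}$ by performing the Vinberg truncation simultaneously at every negative-type face of $P$: each such truncation cuts $P$ along the invariant hyperplane of the corresponding negative-type standard subgroup, is compatible with the $\Gamma$-action, and yields a $\rho(\Gamma)$-invariant properly convex set with compact quotient, while condition $(3)$ ensures that after these truncations no zero-type ``cusp'' faces remain, so the quotient is genuinely compact. The delicate points are to check that the simultaneous truncations preserve proper convexity — this is where Lemma~\ref{DGKLM Lemma3.26} and the reducedness criterion of Proposition~\ref{Vin71 Prop18} are used to track $V_\alpha$, $V_v$ and the relative position of the walls — and to verify the ``large enough'' clause, namely that the truncated core contains every accumulation point of every orbit $\rho(\Gamma)\cdot y$ with $y$ in the properly convex domain; the absence of zero-type subgroups makes all limit points regular and hence captured by the truncated core, and combining the Vinberg polytope technology with the convex cocompactness machinery of \cite{DGK23} closes the cycle.
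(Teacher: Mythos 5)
First, a point of comparison: the paper does not prove this statement at all. It is quoted verbatim as Theorem~1.8 of \cite{DGKLM23} and used as a black box in the proof of Proposition~\ref{Additional prop}; there is no ``paper's own proof'' to measure your attempt against. What you have written is therefore being judged as a reconstruction of the argument in \cite{DGKLM23}, and as such it is an architecturally plausible outline --- the cycle $(2)\Rightarrow(1)\Rightarrow(3)\Rightarrow(2)$ plus $(3)\Leftrightarrow(4)$, the dictionary between standard subgroups and faces of the Vinberg polytope, and the role of zero-type Cartan submatrices as the obstruction are all the right ingredients --- but it is a sketch with genuine gaps rather than a proof.

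The most serious gap is $(3)\Rightarrow(2)$, which you yourself flag as ``the main obstacle'' and then resolve by assertion. The claim that one can ``perform the Vinberg truncation simultaneously at every negative-type face'' and obtain a cocompact convex core is exactly the content that needs proving: you do not construct the invariant hyperplanes, do not show the truncations are pairwise compatible, and do not prove the ``large enough'' clause beyond saying that the absence of zero-type subgroups ``makes all limit points regular.'' Deferring this to ``the convex cocompactness machinery of \cite{DGK23}'' is circular in a proof of the theorem that \emph{is} that machinery. A secondary but real problem is that several of your citations into this paper do not support the claims attached to them: Propositions~\ref{Vin71 Prop13} and~\ref{Vin71 Prop14} concern, respectively, when the cone $K$ has $f$ sides and linear relations among the $\alpha_i$ for non-adjacent sides --- neither gives you the properly convex invariant domain $\Omega_{\mathrm{Vin}}$ with proper discontinuous action (that is Vinberg's main theorem, not stated in this paper); and Lemma~\ref{DGKLM Lemma3.26} is a semisimplicity criterion ($V=V_\alpha\oplus V_v$), which has no bearing on whether simultaneous truncations preserve proper convexity. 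Finally, in $(3)\Leftrightarrow(4)$ you invoke ``the classification of irreducible Coxeter groups admitting a zero-type Cartan matrix'' without stating it; since conditions $\neg(IC)$ and $(\tilde{A})$ exist precisely to make that classification collapse to type $\tilde{A}_k$, this step should be spelled out rather than gestured at. In short: correct skeleton, but the load-bearing steps are missing.
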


We aim to apply Theorem \ref{DGKLM Thm1.8} to our main result, Theorem \ref{Main Theorem}. The Coxeter diagram associated with this theorem is shown in Figure \ref{Coxeter diagram of Thm1}. A bold line of the Coxeter diagram implies an infinite edge order.

\begin{figure}[h]
    \centering
    \includegraphics[height=5cm]{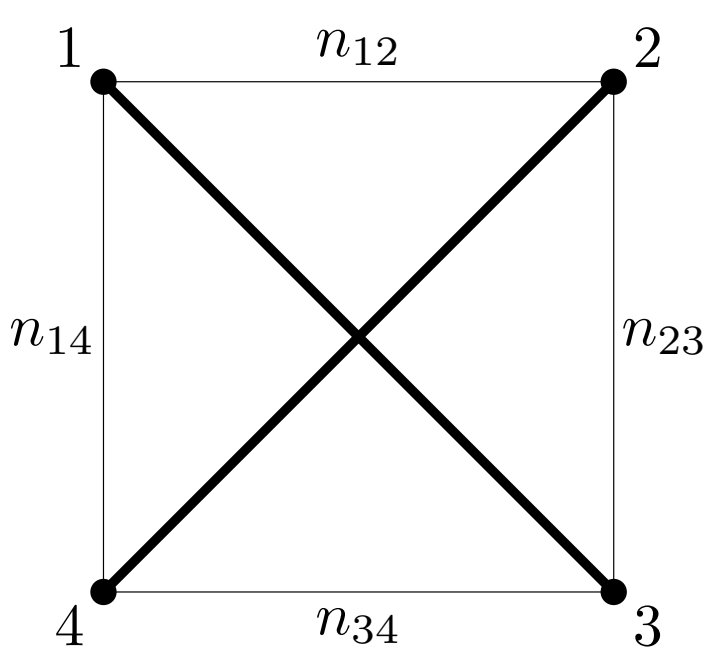}
    \caption{The Coxeter diagram of Theorem 1}
    \label{Coxeter diagram of Thm1}
\end{figure}

\begin{proposition} \label{Additional prop}

Let $\hat{P}$ be a convex Coxeter orbifold $D^2(; n_1, n_2, n_3, n_4) \times \R$, where $n_1, n_2, n_3, n_4$ are positive integers greater than or equal to 3. Let $\Gamma$ be the Coxeter group related to $\hat{P}$ and let $\rho \in \Hom^{\operatorname{ref}}(\Gamma,\SL_\pm(4,\R))$. Then,

\begin{itemize}

\item $\rho(\Gamma)$ is convex cocompact if and only if $T_{13}>4$ and $T_{24}>4$.
\item $\rho(\Gamma)$ corresponding to the concurrent case is always convex cocompact.
\item The boundary of $\Defss(\hat{P})$ lies entirely within one of the two connected components of $\Defssgen(\hat{P})$.

\end{itemize}

\end{proposition}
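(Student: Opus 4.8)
The plan is to read off the first two bullets from the convex-cocompactness criterion of Theorem~\ref{DGKLM Thm1.8}, and then to derive the third bullet from the first two together with the parametrizations of Chapter~\ref{ch:3}. The first task is to check that the Coxeter group $\Gamma$ of $\hat P$ meets the hypotheses $\neg(IC)$ and $(\tilde A)$. Its Coxeter diagram (Figure~\ref{Coxeter diagram of Thm1}) is the $4$-cycle $R_1R_2R_3R_4$ carrying finite labels $n_{12},n_{23},n_{34},n_{14}\ge 3$ on the edges and infinite labels $n_{13},n_{24}$ on the two diagonals. The only proper infinite standard subgroups are the infinite dihedral groups $\langle R_1,R_3\rangle$ and $\langle R_2,R_4\rangle$ (a one-node standard subgroup is $\Z_2$; a two-node one is infinite only across a diagonal; a standard subgroup on $\ge 3$ nodes leaves at most one node, which cannot carry a commuting infinite standard subgroup), and these two are node-disjoint but do not commute, because, e.g., $R_1$ and $R_2$ do not (as $n_{12}\ge 3$). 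This gives $\neg(IC)$. For $(\tilde A)$, every standard subgroup on exactly three nodes is a triangle group with two finite labels $\ge 3$ and one infinite label, hence satisfies $\tfrac1p+\tfrac1q+0<1$ and is hyperbolic, not affine; the whole four-node group contains such a subgroup and so is not virtually abelian, hence not affine. Thus $\Gamma$ has no irreducible affine standard subgroup on $\ge 3$ nodes and $(\tilde A)$ holds vacuously.

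Next I would apply Theorem~\ref{DGKLM Thm1.8}(4): $\rho(\Gamma)$ is convex cocompact if and only if every standard subgroup of type $\tilde A_k$ ($k\ge 1$) has Cartan submatrix of nonzero determinant. Since $\tilde A_1$ is exactly the infinite dihedral group, the $\tilde A_k$ standard subgroups here are precisely $\langle R_1,R_3\rangle$ and $\langle R_2,R_4\rangle$; there is no $\tilde A_k$ with $k\ge 2$, as that would require a triangle, respectively a $4$-cycle, all of whose labels equal $3$ and with no further edges, contradicting the presence of the two infinite diagonals. The corresponding Cartan submatrices have diagonal entries $2$ and off-diagonal entries $M_{13},M_{31}$, respectively $M_{24},M_{42}$, so their determinants are $4-T_{13}$ and $4-T_{24}$. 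As $n_{13}=n_{24}=\infty$, Vinberg's condition (C4) gives $T_{13}\ge 4$ and $T_{24}\ge 4$, so these determinants vanish exactly when $T_{13}=4$, respectively $T_{24}=4$. This proves the first bullet.

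For the second bullet, recall from Chapter~\ref{ch:3.1} that in the concurrent case $T_{13}=v_{13}v_{31}$ with $v_{13}=v_{23}+\mu_{34}/v_{34}-2<-2$ and $v_{31}=\mu_{14}/v_{14}+\mu_{12}/v_{12}-2<-2$ (using negativity of the $v$'s and positivity of the $\mu$'s), so $T_{13}>4$, and likewise $T_{24}=M_{24}M_{42}>4$ is the strict form of condition (3) there; the first bullet then yields convex cocompactness. For the third bullet I would use the homeomorphism $\Defss(\hat P)\cong\R^3\times[4,\infty)^2$ of Theorem~\ref{Main Theorem}, under which the two $[4,\infty)$ coordinates are $T_{13}$ and $T_{24}$, so the (manifold-with-corners) boundary of $\Defss(\hat P)$ is $\{T_{13}=4\}\cup\{T_{24}=4\}$. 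By the second bullet $\Defsscon(\hat P)$ lies in the open set $\{T_{13}>4\}\cap\{T_{24}>4\}$, hence is disjoint from the boundary, so $\partial\,\Defss(\hat P)\subset\Defss(\hat P)\setminus\Defsscon(\hat P)=\Defssgen(\hat P)$. Finally $\{T_{13}=4\}$ and $\{T_{24}=4\}$ are each homeomorphic to $\R^3\times[4,\infty)$, hence connected, and they meet along $\R^3\times\{(4,4)\}\ne\emptyset$, so $\partial\,\Defss(\hat P)$ is connected; a connected subset of $\Defssgen(\hat P)$ must lie in one of its two connected components, which is the assertion.

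The step I expect to be the main obstacle is the structural bookkeeping in the first two paragraphs: verifying $\neg(IC)$ and $(\tilde A)$ and, above all, pinning down exactly which standard subgroups are of type $\tilde A_k$, so that Theorem~\ref{DGKLM Thm1.8}(4) reduces to the two $2\times 2$ determinants. Once that classification is in place the rest is routine. A secondary point to treat carefully is that $T_{13}=M_{13}M_{31}$ and $T_{24}=M_{24}M_{42}$ are genuine functions on the deformation space (invariant under both the $\R_+^f$-rescaling of the $(\alpha_i,v_i)$ and $\SL_\pm$-conjugation), so that the boundary locus $\{T_{13}=4\}\cup\{T_{24}=4\}$ is intrinsically defined and agrees under the parametrizations of Chapters~\ref{ch:3.1} and~\ref{ch:3.2}.
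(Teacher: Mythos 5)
Your proposal is correct and follows essentially the same route as the paper: verify $\neg(IC)$ and $(\tilde A)$, invoke Theorem~\ref{DGKLM Thm1.8}(4) to reduce convex cocompactness to the two $2\times 2$ Cartan submatrices across the infinite-order diagonals, obtain the concurrent case from the strict inequalities in the Chapter~\ref{ch:3.1} parametrization (the paper's primary argument is via the hyperbolic structure on the quadrilateral, but it cites the same parametrization as verification), and locate the boundary $\{T_{13}=4\}\cup\{T_{24}=4\}$ inside the general-position locus. Your explicit classification of the $\tilde A_k$ standard subgroups and the connectedness argument for the boundary are somewhat more detailed than the paper's terse treatment, but the substance is identical.
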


\begin{proof}

The first statement follows directly from Theorem \ref{DGKLM Thm1.8}. To apply the theorem, we should verify that $\Gamma$ satisfies $\neg(IC)$ and $(\Tilde{A})$. First, verify $\neg(IC)$ part. The only possible pair of disjoint infinite standard subgroups is given by $\Gamma_1$, generated by $R_1$ and $R_3$, and $\Gamma_2$, generated by $R_2$ and $R_4$. However, these two subgroups do not commute, confirming that $\neg(IC)$ holds. Next, we verify condition $(\Tilde{A})$. Every Coxeter diagram of a standard subgroup of $\Gamma$ with $N = 3$ forms a triangular graph labeled with two finite integers and one infinity, which is not affine. Thus, there is no irreducible affine standard subgroup of $\Gamma$ with $N \geq 3$, confirming $(\Tilde{A})$. Consider statement (4) of Theorem \ref{DGKLM Thm1.8}. For our orbifold $\hat{P}$, there is no such $\Gamma'$ of type $\Tilde{A}_k$ with $k>1$. Suppose that $k=1$. This simply states that $M_{ij}M_{ji} > 4$ for all $i \neq j$ with $n_{ij} = \infty$. Therefore, $\rho(\Gamma)$ is convex cocompact in $\mathbb{P}(V)$ if and only if both $M_{13}M_{31}$ and $M_{24}M_{42}$ are greater than 4.

Next, prove the second statement. Note that the concurrent case admits the structure of quadrilateral $D^2(n_1,n_2,n_3,n_4)$, and this always admits a hyperbolic structure. Therefore, every concurrent case implies that $T_{13}>4$ and $T_{24}>4$. (We also verified in chapter \ref{ch:3.1} with the parametrization.) By the first statement, the second statement is proved.

Lastly, prove the third statement. Since
\[
\{v_{23}<0\} \times \{v_{24}<0\} \times \{v_{34}<0\} \times \{T_{13}\geq4\} \times \{T_{24}\geq4\},
\]
the boundary of $\Defss(\hat{P})$ implies that $T_{13}=4$ or $T_{24}=4$. As we saw in Figure \ref{Figure graph standard}, $\Defssgen(\hat{P})$ is disconnected by $\Defsscon(\hat{P})$ in $\Defss(\hat{P})$. This proves the third statement.

\end{proof}

\section[Appendix]{Appendix}
\label{ch:5}
\subsection{The calculations of cyclic invariants}
\label{ch:5.1}

We can reinterpret the proof in Chapter \ref{ch:3.2} using cyclic invariants. A \textit{cyclic invariant} is $M_{i_1 i_2}M_{i_2 i_3} \cdots M_{i_k i_1}$ for distinct indices $i_1, \cdots, i_k$, where $M$ is the Cartan matrix. (Some define a cyclic invariant in the form of $\log \frac{M_{i_1 i_2}M_{i_2 i_3} \cdots M_{i_k i_1}}{M_{i_1 i_k}M_{i_k i_{k-1}} \cdots M_{i_2 i_1}}$, as the cyclic invariant with inverse indices can be uniquely determined.) Note that cyclic invariants of length 2 are the invariant values in the fourth condition of Vinberg's conditions. The following proposition explains why this is called an invariant.

\begin{proposition}[Proposition 16, \cite{Vin71}]\label{Vin71 Prop16} Let $M$ and $M'$ be two Cartan matrices. Then, the following are equivalent: \begin{itemize} \item Their cyclic invariants are identical. \item $M' = DMD^{-1}$ for some diagonal matrix $D$ with positive diagonal elements. \item $M$ and $M'$ are the Cartan matrices of the same Coxeter polytope, up to isomorphism. \end{itemize} \end{proposition}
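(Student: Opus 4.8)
The plan is to prove the three bullets equivalent by showing $(2)\Rightarrow(1)$, then $(1)\Rightarrow(2)$ (these two give the equivalence of the first two bullets), and finally $(2)\Leftrightarrow(3)$, which brings in the geometry of the polytope. The implication $(2)\Rightarrow(1)$ is a one-line computation: if $M'=DMD^{-1}$ with $D=\diag(d_1,\dots,d_n)$ and all $d_i>0$, then $M'_{ij}=(d_i/d_j)\,M_{ij}$, so for distinct indices $i_1,\dots,i_k$ the scalars $d_{i_1}/d_{i_2},\,d_{i_2}/d_{i_3},\,\dots,\,d_{i_k}/d_{i_1}$ multiply to $1$ and hence $M'_{i_1i_2}M'_{i_2i_3}\cdots M'_{i_ki_1}=M_{i_1i_2}M_{i_2i_3}\cdots M_{i_ki_1}$.

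For the converse $(1)\Rightarrow(2)$ --- the substantive step --- I would introduce the \emph{support graph} $G$ on the index set, with an edge between distinct indices $i$ and $j$ exactly when $M_{ij}\neq 0$; this is symmetric by Vinberg's condition (C3). Since the length-$2$ cyclic invariants $M_{ij}M_{ji}$ are among the data assumed to agree, and (C3) holds for both matrices, one gets $M_{ij}=0\Leftrightarrow M'_{ij}=0$, so $M$ and $M'$ have the same support graph $G$. On each connected component pick a base vertex $i_0$ and set $d_{i_0}=1$; for any vertex $j$, define $d_j=\prod_{\ell=1}^{m}\bigl(M_{j_{\ell-1}j_\ell}/M'_{j_{\ell-1}j_\ell}\bigr)$ along an edge-path $i_0=j_0,\dots,j_m=j$, noting that each factor is positive because off-diagonal entries on edges are strictly negative by (C2). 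Assigning to an oriented edge $i\to j$ the positive number $M_{ij}/M'_{ij}$ and to a walk the product of these numbers gives a homomorphism from the path groupoid of $G$ to $\R_{>0}$; it kills every backtrack $i\to j\to i$ exactly because $M_{ij}M_{ji}=M'_{ij}M'_{ji}$, hence descends to the fundamental group of each component, which is generated by the fundamental cycles of a chosen spanning tree, and each fundamental cycle is a simple cycle of length $\geq 3$ on which the product equals a cyclic invariant of $M$ divided by the same cyclic invariant of $M'$, i.e.\ equals $1$. Therefore $d_j$ does not depend on the chosen path, and one checks directly that $M'_{ij}=(d_i/d_j)M_{ij}$ for every $i,j$ (the case $M_{ij}=0$ being trivial, the case $M_{ij}\neq 0$ following by appending the edge $i\to j$ to a path to $i$); with $D=\diag(d_1,\dots,d_n)$ this is $M'=DMD^{-1}$.

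For $(2)\Leftrightarrow(3)$ I would unwind the action used to define the deformation space in Definition \ref{def:deformation space}. Conjugating a realizing tuple $(\alpha_1,\dots,\alpha_f,v_1,\dots,v_f)$ by $A\in\SL_\pm(V)$ leaves $M_{ij}=\alpha_i(v_j)$ unchanged, and the rescaling $(\alpha_i,v_i)\mapsto(c_i^{-1}\alpha_i,\,c_iv_i)$ with $c_i>0$ leaves each reflection $R_i=\Id-\alpha_i\otimes v_i$ --- hence the polytope --- unchanged while sending $M$ to $C^{-1}MC$ with $C=\diag(c_1,\dots,c_f)$; every positive diagonal conjugation arises this way. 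Thus two presentations of the same labelled Coxeter polytope have Cartan matrices related by a positive diagonal conjugation, which gives $(3)\Rightarrow(2)$; conversely, from $M'=DMD^{-1}$ one applies the rescaling by the entries of $D$ to a tuple realizing $M$ and obtains a tuple realizing $M'$ with the identical set of reflections, hence the very same polytope, which gives $(2)\Rightarrow(3)$. Two points of care here: ``isomorphism of Coxeter polytopes'' must be understood as respecting the labelling of the sides (otherwise the conclusion holds only up to a common permutation of the index set), and the scalar ambiguity in recovering $(\alpha_i,v_i)$ from the rank-one operator $\alpha_i\otimes v_i$ is pinned down to a positive scalar by the normalization $\alpha_i(v_i)=2$ together with the requirement that $\alpha_i$ be non-negative on the polytope.

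I expect the main obstacle to be the well-definedness of $d_j$ inside $(1)\Rightarrow(2)$: one must verify that the path-product is independent of the chosen path, and the efficient way to do this is to pass to a spanning tree, express an arbitrary closed walk as a word in backtracks and fundamental (hence simple) cycles, and observe that the hypothesis on cyclic invariants of all lengths $\geq 2$ is precisely what forces the product around each of these to be $1$. Once this is settled, the remaining steps --- the one-line direction $(2)\Rightarrow(1)$, the support-graph bookkeeping, and the dictionary between positive diagonal conjugation and the deformation-space action for $(2)\Leftrightarrow(3)$ --- are routine.
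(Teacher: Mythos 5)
The paper does not prove this statement at all: it is imported verbatim as Proposition 16 of Vinberg's paper \cite{Vin71}, so there is no in-paper argument to compare yours against. Judged on its own, your reconstruction is correct and is essentially the standard proof. The direction $(2)\Rightarrow(1)$ by telescoping the ratios $d_i/d_j$ is right; the converse via the support graph is sound, and you correctly isolate the two facts that make it work, namely that (C3) plus equality of the length-$2$ invariants forces the two matrices to have the same zero pattern, and that the path-product $\prod M_{j_{\ell-1}j_\ell}/M'_{j_{\ell-1}j_\ell}$ is a groupoid homomorphism killed by backtracks (length-$2$ invariants) and by the fundamental cycles of a spanning tree (higher invariants, each fundamental cycle being simple), hence constant on homotopy classes of closed walks. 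Your treatment of $(2)\Leftrightarrow(3)$ correctly identifies positive diagonal conjugation of $M$ with the rescaling action $(\alpha_i,v_i)\mapsto(c_i^{-1}\alpha_i,c_iv_i)$ from Definition \ref{def:deformation space}, and the two caveats you flag are exactly the right ones: the statement is only true for \emph{labelled} polytopes (otherwise one must allow a simultaneous permutation conjugation), and the recovery of $(\alpha_i,v_i)$ from $R_i=\Id-\alpha_i\otimes v_i$ is ambiguous precisely up to a positive scalar once the sign is fixed by requiring $\alpha_i\geq 0$ on $K$. I see no gap; the only cosmetic remark is that one should also say a word about disconnected support graphs (choose a base point per component; the resulting $D$ is unique only up to a positive scalar on each component, which does not affect the conclusion).
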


By Proposition \ref{Vin71 Prop16}, two Cartan matrices with the same cyclic invariants imply they are the same point in $\Defss(\hat{P})$. Furthermore, we don't need to check all cyclic invariants of the given Cartan matrix. For $D^2(;n_1,n_2,n_3,n_4) \times \mathbb{R}$, two cyclic invariants of length 2 and three cyclic invariants of length 3 determine all cyclic invariants. This can be verified by its Coxeter graph in Figure \ref{Coxeter diagram of Thm1}. Denote $(i_1, \cdots, i_k)$ as the cyclic invariant with the given index order. Then, we can select five cyclic orders that determine all cyclic invariants, each of which has a one-to-one correspondence with our parametrization of the solution space in Chapter \ref{ch:3.2}
\[
    (1,3) = M_{13}M_{31}, \quad
    (2,4) = M_{24}M_{42}, \quad
    (1,2,3) = \mu_{12} v_{23}, \quad
    (1,2,4) = \mu_{12} v_{24}, \quad
    (1,4,3) = \frac{\mu_{34}}{v_{34}}.
\]
Therefore, we can also prove Theorem \ref{Main Theorem} using cyclic invariants. The preservation of the cyclic invariant is currently only achieved in semisimple cases, so this assumption must be made.

\begin{remark} Five cyclic invariants can determine all cyclic invariants of Theorem \ref{Main Theorem}. \end{remark}

Without loss of generality, (1,3), (2,4), (1,2,3), (1,2,4), (1,3,4) determine all other cyclic invariants.
\begin{align*}
(1,3,2) &= \frac{\mu_{12}\mu_{23}(1,3)}{(1,2,3)} \\
(1,4,2) &= \frac{\mu_{12}\mu_{14}(2,4)}{(1,2,4)} \\
(1,4,3) &= \frac{\mu_{14}\mu_{34}(1,3)}{(1,3,4)} \\
(2,3,4) &= \frac{(2,4)(1,2,3)(1,3,4)}{(1,3)(1,2,4)} \\
(2,4,3) &= \frac{\mu_{23}\mu_{34}(1,3)(1,2,4)}{(1,2,3)(1,3,4)} \\
(1,2,3,4) &= \frac{(1,2,3)(1,3,4)}{(1,3)} \\
(1,2,4,3) &= \frac{\mu_{34}(1,3)(1,2,4)}{(1,3,4)} \\
(1,3,2,4) &= \frac{\mu_{23}(1,3)(1,2,4)}{(1,2,3)} \\
(1,3,4,2) &= \frac{\mu_{12}\mu_{24}(1,3,4)}{(1,2,4)} \\
(1,4,2,3) &= \frac{\mu_{14}(2,4)(1,2,3)}{(1,2,4)} \\
(1,4,3,2) &= \frac{\mu_{12}\mu_{23}\mu_{34}\mu_{14}(1,3)}{(1,2,3)(1,3,4)}
\end{align*}

\begin{remark} A cyclic invariant with the same indices but in a different order can be determined by the other one. \end{remark}

This fact can be easily shown by multiplying those two cyclic invariants. From this, we can conclude that the cyclic invariant with a different order is fully determined by the original one.

\begin{remark} From the Coxeter graph, each closed loop implies a cyclic invariant. Moreover, if the loop can be divided into the sum of smaller loops, then its cyclic invariants can be determined by those two cyclic invariants. \end{remark}

For example, $(1,2,3,4)$ is a quadrilateral, which can be expressed as the sum of two triangles: $(1,2,3)$ and $(1,3,4)$, as shown in Figure \ref{Coxeter diagram of Thm1}.

\subsection{Notebooks of Mathematica} \label{ch:5.2}

You can access the Mathematica notebooks on my website.

\href{https://sites.google.com/view/jaesungbae/home}{https://sites.google.com/view/jaesungbae/home}

\subsubsection{Representations of the general position case} \label{ch:5.2.1}

We assume the same condition as stated in \ref{ch:3.3}, where \( \alpha_j = e^*_j \) for all \( j=1,2,3,4 \), and \( v_1 = [2,-1,-1,-1]^T \).
Recall that the reflections are given by  
\[
R_j(x) = x - \alpha_j(x) v_j.
\]
We then verify that for every finite order $n_{ij}$, the composition \( (R_i R_j)^{n_{ij}} \) becomes the identity matrix.
This Mathematica note confirms that the reflections we assumed indeed generate the desired Coxeter group.

\subsubsection{Representations of the concurrent case}

We assume the same conditions as stated in \ref{ch:3.1}, i.e, $\alpha_j = e^*_j$, for $j=1,2,3$ and $\alpha_4 = e^*_1 - e^*_2 + e^*_3$, so that it corresponds to the concurrent case. Similar to \ref{ch:5.2.1}, verify that for every finite order $n_{ij}$, the composition \( (R_i R_j)^{n_{ij}} \) becomes the identity matrix so that the reflections generate the desired Coxeter group.

\subsubsection{Determinant of the Cartan matrix in Chapter \ref{ch:3.2}}

This Mathematica notebook demonstrates that the determinant of the Cartan matrix in Chapter \ref{ch:3.2} cannot be zero if we assume that either $T_{13}$ or $T_{24}$ is equal to 4. This leads to the fact that the solution space, assuming $\alpha_4 \neq 0$ and $v_{44} = 0$, excludes points with the above assumption.

\subsubsection{Analysis of the cone space of Porti's theorem}

This Mathematica notebook shows the minimum and maximum values of $a_4 v_{44}$ from Chapter \ref{ch:3.2}, by changing the finite orders $n_{12}, n_{23}, n_{34}, n_{14}$ and the values of the infinite orders $T_{13}$ and $T_{24}$. From this notebook, we can check that the variable of the cone space of Theorem \ref{Por23 Thm3.6(2)}, which is $a_4 v_{44}$, has restrictions for each $n_{12}, n_{23}, n_{34}, n_{14}, T_{13}$, and $T_{24}$.

For example, if we assume that all finite orders are 3 and $T_{13} = T_{24} = 6$, then the minimum of $a_4 v_{44}$ is almost close to 2. This implies that there is no concurrent case with those assumptions. If we assume $T_{13} = T_{24} = 16$, the minimum of $a_4 v_{44}$ becomes 0. In fact, if we compute $T_{13} T_{24}$ in the concurrent case, the minimum of $T_{13} T_{24}$ is 256 if we assume all finite edge orders are 3. If we consider finite edge orders greater than 3, this minimum grows larger.


\nocite{*}
\bibliographystyle{alpha}
\bibliography{references}
\end{document}